\def\le{\leqslant}
\def\ge{\geqslant}
\def\leq{\leqslant}
\def\geq{\geqslant}
\newtheorem{theorem}{Theorem}
\newtheorem{corollary}[theorem]{Corollary}
\newtheorem{lemma}[theorem]{Lemma}
\newtheorem{remark}[theorem]{\bf Remark}
\def \vol{\mathrm {Vol}}
\def \lcm{\mathrm {lcm}}
\newcommand\cL{\mathcal{L}}
\newcommand\cF{\mathcal{F}}
\newcommand{\Z}{\mathbb{Z}}
\newcommand{\Q}{\mathbb{Q}}
\newcommand{\R}{\mathbb{R}}
\newcommand{\F}{\mathbb{F}}
\newcommand{\C}{\mathbb{C}}
\newcommand{\e}{\operatorname{e}}
\numberwithin{equation}{section}
\numberwithin{theorem}{section}
\def\sE{\mathsf {E}}
\def\sT{\mathsf {T}}
\newcommand{\QQ}{\mathbb{Q}}
\newcommand{\Fq}{\mathbb{F}_q}
\def\cA{{\mathcal A}}
\def\cB{{\mathcal B}}
\def\cC{{\mathcal C}}
\def\cD{{\mathcal D}}
\def\cF{{\mathcal F}}
\def\cI{{\mathcal I}}
\def\cJ{{\mathcal J}}
\def\cL{{\mathcal L}}
\def\cN{{\mathcal N}}
\def\cU{{\mathcal U}}
\def\ssum{\mathop{\sum\ldots \sum}}
\def \balpha{\bm{\alpha}}
\def \bbeta{\bm{\beta}}
\def\ov\QQ{\overline{\QQ}}
\DeclareMathOperator{\supp}{supp}
\def \Gal{{\mathrm {Gal}}}
\def\e{\mathbf{e}}
\def\eq{{\mathbf{\,e}}_q}
\def\mand{\qquad\mbox{and}\qquad}
\def\vec#1{\mathbf{#1}}
\def\({\left(}
\def\){\right)}
\def\rf#1{\left\lceil#1\right\rceil}
\begin{document}

\title[Energy bounds for modular roots]
{Energy bounds for modular roots and their applications}

 \author[B. Kerr] {Bryce Kerr}
\address{Max Planck Institute for Mathematics, Bonn, Germany}
\email{bryce.kerr89@gmail.com}

 \author[I. D. Shkredov]{Ilya D. Shkredov}
\address{I.D.S.: Steklov Mathematical Institute of Russian Academy
of Sciences, ul. Gubkina 8, Moscow, Russia, 119991; \ Institute for Information Transmission Problems  of Russian Academy
of Sciences, Bolshoy Karet\-ny Per. 19, Moscow, Russia, 127994; \ 
Moscow Institute of Physics and Technology, Institutskii per. 9, Dolgoprudnii, Russia, 141701}
\email{ilya.shkredov@gmail.com}

 \author[I.~E.~Shparlinski]{Igor E. Shparlinski}
 \address{I.E.S.: School of Mathematics and Statistics, University of New South Wales.
 Sydney, NSW 2052, Australia}
 \email{igor.shparlinski@unsw.edu.au}
 
 \author[A. Zaharescu]{Alexandru Zaharescu} 
 \address{A.Z.: Department of Mathematics, University of Illinois at
Urbana-Cham\-paign  1409 West Green Street, Urbana, IL 61801, USA and 
Simon Stoilow Institute of Mathematics of the Romanian Academy, P.O. Box 1-764, RO-014700 Bucharest, Romania}
 \email{zaharesc@illinois.edu}
 
 \begin{abstract}  
We generalise and improve some recent bounds for additive energies of modular roots. Our arguments use a variety of techniques, including those from additive combinatorics, algebraic number theory and the geometry of numbers. We give applications of these results to new  
bounds on correlations between  {\it Sali{\'e}} sums and 
to a new equidistribution estimate for the set of modular roots of primes. 
\end{abstract}  

\keywords{modular roots, additive energy}
\subjclass[2010]{Primary 11L07; Secondary 1B30, 11F37, 11N69}

\maketitle

\tableofcontents

\section{Introduction}
\subsection{Background} 
For a prime $q$ we use $\Fq$ to denote the finite field of $q$ elements. 
Given a set $\cN \subseteq \Fq$ and an integer $k\ge 1$,  let $T_{\nu,k}(\cN;q)$ be  the number of solutions to 
the equation (in $\Fq)$, 
$$
b_1+\ldots+b_\nu=b_{\nu+1} +\ldots+b_{2\nu}, \qquad b_i^k \in \cN, \ i =1, \ldots, 2\nu.
$$
For $\nu=2$ we also denote 
$$
T_{\nu,k}(\cN;q) = E_{k}(\cN;q).
$$
When $k=1$,  this is the well-known in additive combinatorics quantity called the {\it additive energy\/} 
of $\cN$.  More generally, $ E_{k}(\cN;q)$ is the  additive energy of the set of $k$-th roots of elements of $\cN$ (of those which are $k$-th power residues). 

In the special case $\cN =  \{1, \ldots, N\}$ for an integer $1 \le N < q$,  we also write 
$$
T_{\nu,k}\(j\cN;q\) = \sT_{\nu,k}(N;j,q), \qquad E_{k}\( j\cN;q\)= \sE_k(N;j,q),
$$
where the set $j\cN = \{j, \ldots, jN\}$ is embedded in $\Fq$ in a natural way. 

The quantity $\sE_{2}(N;j,q).$ has been introduced and  estimated in~\cite{DKSZ}.
In particular, for any $j \in \Fq^*$, by~\cite[Lemmas~6.4 and~6.6]{DKSZ} we have 
\begin{equation}\label{eq:Energy-DKSZ}
 \sE_2(N;j,q)\le   \min\left\{N^4/q + N^{5/2}, \,  N^{7/2}/q^{1/2}+ N^{7/3}   \right\} q^{o(1)},
\end{equation}
which has been used in~\cite[Theorem~1.7]{DKSZ} to estimate certain bilinear 
sums and thus improve some results of~\cite{DuZa} on correlations between  {\it Sali{\'e}} sums,
which is important  for applications to moments of $L$-functions attached to some  modular forms.
Furthermore, bounds of such bilinear sums have applications to the distribution of modular square 
roots of primes, see~\cite{DKSZ, SSZ2} for details.

This line of research has been continued  in~\cite{SSZ1} where it is shown that for almost all primes $q$, 
for all  $N < q$ and  $j \in \Fq^*$ one has an essentially 
optimal bound
\begin{equation}\label{eq:Energy-SSZ-N}
 \sE_2(N;j,q)\le \(N^4/q + N^{2}\)q^{o(1)}.
\end{equation}
As an application of the bound~\eqref{eq:Energy-SSZ-N}, it has been show in~\cite{SSZ1} that on 
average over $q$ one can significantly improve the error term in the asymptotic 
formula for twisted second moments of $L$-functions of half integral weight modular forms. 

Furthermore, it is shown  in~\cite{SSZ1}  that  methods of {\it additive combinatorics\/}  can 
be used  to estimate $ E_{2}(\cN;q)$ for sets $\cN$ with small doubling. 
Namely, for an arbitrary  set $\cN$ (of any algebraic domain equipped with addition), as usual, we denote
$$
\cN+\cN = \{n_1+n_2:~n_1,n_2 \in \cN\}.
$$
Then it is shown in~\cite{SSZ1}, in particular, that  if $\cN \subseteq \Z_q$ is a set of cardinality $N$ 
	such that   $\#\(\cN + \cN\)   \le LN$ for some real $L$, then 
\begin{equation}\label{eq:Energy-SSZ-Set-N}
 E_{2}(\cN;q) \le  q^{o(1)} \( \frac{ L^4 N^4}{q} + 
  L^2  N^{11/4}\) \,. 
\end{equation}

Here we extend and improve these results in several directions and  obtain upper bounds on 
$T_{\nu,k}(\cN;q)$ and $\sT_{\nu,k}(N;j,q)$ 
for other choices of $(\nu,k)$ besides $(\nu,k) = (2,2)$ along with improving the bound of~\cite[Lemma~6.6]{DKSZ} for $T_{2,2}(N;j,q)$. 

Our estimate for $T_{2,2}(N;j,q)$ gives some improvement on exponential sums bounds from~\cite{DKSZ}. Obtaining nontrivial bounds on $\sT_{\nu,k}(N;j,q)$ with $\nu > 2$ have a potential to to obtain further improvements and extend the region in 
which there are non-trivial  bounds of bilinear sums from~\cite{DKSZ,SSZ1}. In turn this can lead to further advances in their applications. 

One such application  is to bilinear sums
with some {\it multidimensional Sali{\'e} sum\/} which by a result of Duke~\cite{Duke} 
can be reduced to one dimensional sums over $k$-th roots (generalising the case of 
$k=2$, see~\cite{IwKow}[Lemma~12.4] or~\cite{Sarn}[Lemma~4.4]). This result of 
 Duke~\cite{Duke}  combined with our present results and also the approach of~\cite{DuZa,DKSZ,SSZ1}, 
may have a potential to 
 lead to new asymptotic formulas for moments of $L$-functions with Fourier 
 coefficients of automorphic forms over ${\mathrm{GL}}(k)$ with $k \ge 3$.

\subsection{Notation} 
Throughout the paper, the notation $U = O(V)$, 
$U \ll V$ and $ V\gg U$  are equivalent to $|U|\leqslant c V$ for some positive constant $c$, 
which throughout the paper may depend on the integer $k$. 

For any quantity $V> 1$ we write $U = V^{o(1)}$ (as $V \to \infty$) to indicate a function of $V$ which 
satisfies $|U| \le V^{\varepsilon}$ for any $\varepsilon> 0$, provided $V$ is large enough. 

 For complex weights  $\bbeta = \{\beta_n\}_{n\in \cN}$, supported on a finite set $\cN$, we  define the norms 
 $$
 \|\bbeta\|_\infty=\max_{n\in \cN}|\beta_n|  \mand \|\bbeta\|_\sigma =\( \sum_{n\in \cN} |\alpha_n|^\sigma\)^{1/\sigma},
 $$
where  $\sigma >1$, and similarly for other weights.

For a real $A> 0$, we write $a \sim A$ to indicate  that $a$ is in the dyadic interval $A/2 \le  a <  A$.

We use $\# \cA$ for the cardinality of a finite set $\cA$.

Given two functions $f,g$ on some algebraic domain $\cD$ equipped with addition, we define the convolution
$$(f\circ g)(d)=\sum_{x\in \cD}f(x)g(d-x).$$
We can then recursively define longer convolutions $(f_1\circ \ldots \circ f_s)(d)$.  

If $f$ is the indicator function of a set $\cA$ then we write
$$(f\circ f)(d)=\(\cA \circ \cA\) (d).$$ 

In fact, we often use $\cA(a)$ for the indicator function of a set $\cA$, that is, $\cA(a) = 1$ 
if $a \in \cA$ and $\cA(a) = 0$ otherwise. 

Note that $\(\cA \circ \cA\) (d)$ counts the number of the solutions to the equation $d=a_1-a_2$, where $a_1$, $a_2$ run over $\cA$, that is
\begin{equation}
\label{eq:A-circ}
\(\cA \circ \cA\) (d) = \# \{(a_1,a_2)\in \cA^2:~d=a_1-a_2\}. 
\end{equation}

  As usual, we also write 
$$
\cA + \cA = \{a_1+a_2:~a_1,a_2\in \cA\}.
$$

Finally, we follow the convention that in summation symbols $\sum_{a \le A}$ the sum 
is over positive integers $a \le A$.

\subsection{New results}
We start with a new bound  on $\sT_{2,2}(N;j,q)=\sE_2(N;j,q)$ which improves~\eqref{eq:Energy-DKSZ}. 

\begin{theorem}
\label{thm:T22-N}
Let $q$ be prime. For any $j\in \Fq^{*}$ and integer $N\le q$ we have
$$
\sT_{2,2}(N;j,q)\ll \left(\frac{N^{3/2}}{q^{1/2}}+1\right)N^{2+o(1)}.
$$
\end{theorem}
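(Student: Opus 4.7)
The plan is to identify $\sT_{2,2}(N;j,q)$ with the additive energy of the set $\cS := \{b \in \Fq : b^2 \in j\cN\}$ (of size at most $2N$) and to exploit its square-root structure. Writing each $b \in \cS$ as $b \equiv \sigma \sqrt{jn}\pmod q$ with $\sigma \in \{\pm 1\}$ and $n \in \cN = \{1, \ldots, N\}$, I would decompose $\sT_{2,2}(N;j,q)$ as a sum over the $16$ sign patterns of counts of quadruples $(n_1, n_2, n_3, n_4) \in \cN^4$ satisfying $\sigma_1\sqrt{jn_1}+\sigma_2\sqrt{jn_2} \equiv \sigma_3\sqrt{jn_3}+\sigma_4\sqrt{jn_4} \pmod q$. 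Squaring both sides and dividing by $j$ eliminates two of the square roots, producing the reduced relation
\[
n_1+n_2-n_3-n_4 \equiv 2\bigl(\epsilon'\sqrt{n_3 n_4}-\epsilon\sqrt{n_1 n_2}\bigr) \pmod q, \qquad \epsilon = \sigma_1\sigma_2,\ \epsilon' = \sigma_3\sigma_4.
\]

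The diagonal contribution, where $\{n_1, n_2\} = \{n_3, n_4\}$ as multisets, is bounded by $O(N^2)$: fixing the unordered pair leaves only $O(1)$ compatible orderings and signs, which matches the $N^{2+o(1)}$ term of the claimed bound. Isolating $\epsilon\sqrt{n_1 n_2}$ in the reduced equation and squaring a second time eliminates all remaining signs and square roots, yielding the polynomial equation
\[
16\, n_1 n_2\, D^2 \equiv \bigl(4(n_3 n_4 - n_1 n_2) - D^2\bigr)^2 \pmod q, \qquad D := n_1+n_2-n_3-n_4,
\]
which, in the auxiliary variables $m := n_1 n_2$, $m' := n_3 n_4$, and $d := (n_1+n_2) - (n_3+n_4)$, factors over the algebraic closure as $d \equiv \pm 2(\sqrt m \pm \sqrt{m'}) \pmod q$. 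In particular, for each pair $(m,d)$ the product $m'$ is constrained to at most two residue classes modulo $q$.

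The main obstacle is bounding the number of off-diagonal integer solutions in $[1,N]^4$ by $O(q^{o(1)} N^{7/2}/q^{1/2})$. My plan is to fix $(n_1, n_2)$, use the residue-class restriction to limit $m' \in [1, N^2]$ to $O(N^2/q + 1)$ integers per admissible $d$, and use the divisor bound $\tau(m') = m'^{o(1)}$ to count representations $m' = n_3 n_4$ with $(n_3, n_4) \in [1,N]^2$; then sum over $(n_1, n_2)$ after a Cauchy--Schwarz step that decouples the multiplicative structure carried by $m$ from the additive structure carried by $d$. The delicate point, and the main technical obstacle, is to arrange Cauchy--Schwarz so that the $q^{-1/2}$ saving coming from the residue-class restriction is not absorbed by divisor losses or by summation-range losses; I expect this to require a careful case split according to the relative sizes of $N$ and $\sqrt q$, with the $N \ge q^{1/3}$ regime being where the $q^{-1/2}$ saving actually becomes dominant over the diagonal term.
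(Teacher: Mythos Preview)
Your setup is sound up through the polynomial identity, but the proposal has a genuine gap at the counting stage, and the missing idea is precisely where the paper's argument diverges from yours.

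Running your plan naively: once $(n_1,n_2)$ is fixed (hence $m=n_1n_2$), the difference $d=(n_1+n_2)-(n_3+n_4)$ ranges over $O(N)$ values, and for each $d$ your residue-class restriction gives $O(N^2/q+1)$ admissible $m'$ in $[1,N^2]$. But for each such $(d,m')$ the pair $(n_3,n_4)$ is already determined by the system $n_3n_4=m'$, $n_3+n_4=(n_1+n_2)-d$, so the divisor bound is redundant and the count of off-diagonal $(n_3,n_4)$ per $(n_1,n_2)$ is $O(N^3/q+N)$. Summing over $(n_1,n_2)$ yields $O(N^5/q+N^3)$, which misses both terms of the target $N^{7/2}q^{-1/2}+N^{2+o(1)}$. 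The $+1$ in $N^2/q+1$ is the culprit: it is an upper bound that is not tight on average, and no generic Cauchy--Schwarz step will beat the resulting $N^3$ without additional structure --- you would need to show that the residue classes $m'\equiv(\sqrt m\pm d/2)^2\pmod q$ are well-distributed as $(m,d)$ varies, and that is exactly the hard part you have not addressed. In fact your polynomial identity is essentially the one behind the \emph{earlier} bounds~\eqref{eq:Energy-DKSZ} from~\cite{DKSZ}, and those methods stall at exponents strictly above $2$.

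The paper takes a genuinely different route. It organises the energy as $\sum_{d\in\Fq}(\cA\circ\cA)(d)^2$ with $d$ the difference of \emph{roots} (not of the $n_i$), and for each $d$ reduces $(\cA\circ\cA)(d)$ to a count $J(d)$ of integer points on a congruence $n^2+j^2d^4\equiv jd^2m\pmod q$ with $|m|,|n|\ll N$. The crucial step is then geometry of numbers: one introduces the lattice $\cL(d)=\{(x,y):x\equiv jd^2y\pmod q\}$ and the box $B=[-O(N^2),O(N^2)]\times[-O(N),O(N)]$, and splits the sum over $d$ according to the successive minima $\lambda_1(d),\lambda_2(d)$ of $\cL(d)$ with respect to $B$. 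When $\lambda_1(d)>1$ one gets $J(d)\le 1$, so $\sum J(d)^2=\sum J(d)\ll N^2$; when $\lambda_1(d)\le1<\lambda_2(d)$ the lattice points lie on a single line, and a divisor argument over \emph{integers} (not residues) again gives total $N^{2+o(1)}$; only when $\lambda_2(d)\le1$ does Minkowski's second theorem force $J(d)\ll N^{3/2}q^{-1/2}$, yielding the main term. This trichotomy by successive minima is the mechanism that converts your uniform ``$+1$'' loss into a genuine saving, and it is absent from your proposal.
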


Note  it is easy to show the following trivial inequality 
$$
\sT_{4,2}(N;j,q)\le N^{4}\sT_{2,2}(N;j,q),
$$
which combined with Theorem~\ref{thm:T22-N} implies that
\begin{equation}
\label{eq:T42123}
\sT_{4,2}(N;j,q)\le \left(\frac{N^{3/2}}{q^{1/2}}+1\right)N^{6+o(1)}.
\end{equation}
We now obtain a stronger bound for short intervals. 

\begin{theorem}
\label{thm:T42-N}
Let $q$ be prime. For any $j\in \Fq^{*}$ and integer $N\le q$ we have
$$
\sT_{4,2}(N;j,q) \le \left(\frac{N^{5/8}}{q^{1/8}}+\frac{N^{8}}{q^{1/2}}\right)N^{6+o(1)}+N^{5+o(1)}.
$$ 
\end{theorem}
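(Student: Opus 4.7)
\medskip\noindent\textbf{Proof plan.} Let $\Omega=\{b\in \Fq: b^{2}\in j\{1,\ldots,N\}\}$, so that $|\Omega|\le 2N$ and $\sT_{4,2}(N;j,q)$ is the $4$-fold additive energy of $\Omega$. Introducing
$$
S(c) = \sum_{b\in \Omega} \eq(cb),
$$
orthogonality yields the identity
$$
\sT_{4,2}(N;j,q) \;=\; \frac{1}{q}\sum_{c\in\Fq}|S(c)|^{8}.
$$
My plan is to combine three inputs: (i) the trivial estimate $|S(0)|=|\Omega|\le 2N$; (ii) a pointwise Sali\'e-type bound $|S(c)|\ll q^{1/2+o(1)}$ for every $c\ne 0$, obtained by Fourier-expanding the indicator of $[1,N]$ modulo $q$ and completing the square in the resulting complete quadratic Gauss sums; (iii) the $L^{4}$-moment identity $\sum_{c}|S(c)|^{4} = q\,\sT_{2,2}(N;j,q)$, controlled by Theorem~\ref{thm:T22-N}.

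\medskip\noindent First, isolate the $c=0$ contribution, which is $|\Omega|^{8}/q\ll N^{8}/q$. Since $N^{8}/q\le N^{14+o(1)}/q^{1/2}$ whenever $N\ge q^{1/12}$, and $N^{8}/q\le N^{5+o(1)}$ whenever $N\le q^{1/3}$, this term fits into the target bound in all relevant ranges. Next, for the $c\ne 0$ modes, apply the Cauchy--Schwarz step
$$
\sum_{c\ne 0}|S(c)|^{8}\;\le\;\Bigl(\max_{c\ne 0}|S(c)|\Bigr)^{4}\sum_{c}|S(c)|^{4}\;\ll\; q^{2+o(1)}\cdot q\,\sT_{2,2}(N;j,q),
$$
and invoke Theorem~\ref{thm:T22-N}. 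After dividing by $q$ and tracking the two pieces $N^{3/2}/q^{1/2}$ and $1$ of the bound on $\sT_{2,2}$, one obtains a contribution matching the $(N^{8}/q^{1/2})N^{6+o(1)}$ main term and being absorbed into $N^{5+o(1)}$ in the small-$N$ regime.

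\medskip\noindent To recover the sharper term $(N^{5/8}/q^{1/8})N^{6+o(1)}$, the naive Cauchy--Schwarz above must be refined. I would perform a dyadic decomposition over the level sets $|S(c)|\sim V$ for dyadic $V\in[1,q^{1/2+o(1)}]$: the Chebyshev consequence of the $L^{4}$-moment gives $\#\{c:|S(c)|\sim V\}\le q\,\sT_{2,2}V^{-4}$, so each dyadic level contributes at most $V^{4}\cdot q\,\sT_{2,2}$ to $\sum_{c\ne 0}|S(c)|^{8}$. By balancing the threshold separating ``typical'' from ``heavy'' frequencies against the Sali\'e pointwise bound, one expects to extract the $5/8$-exponent. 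Equivalently one may interpolate with an auxiliary intermediate moment estimate (a non-trivial bound for $\sT_{3,2}$ derived from Theorem~\ref{thm:T22-N} together with the pointwise bound) in order to save the remaining factor over the naive argument.

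\medskip\noindent The main obstacle is this last step: the plain Cauchy--Schwarz loses a small factor with respect to the target exponent $N^{5/8}/q^{1/8}$, and closing the gap requires either a sharper arithmetic-geometric input - likely a lattice/geometry-of-numbers analysis exploiting the specific structure of $\Omega$ as the set of modular square roots, in the spirit of~\cite{DKSZ} - or a delicate interpolation between the Salié pointwise bound and Theorem~\ref{thm:T22-N} performed uniformly across the transition between the regimes $N\le q^{1/3}$ (where the $N^{5+o(1)}$ diagonal dominates) and $N>q^{1/3}$ (where the main terms take over).
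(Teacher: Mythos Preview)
Your approach has a fundamental gap: the pointwise Sali\'e bound $|S(c)|\ll q^{1/2+o(1)}$ is \emph{weaker} than the trivial bound $|S(c)|\le 2N$ precisely in the regime $N\le q^{1/2}$, and Theorem~\ref{thm:T42-N} is only new (i.e.\ stronger than~\eqref{eq:T42123}) when $N\le q^{1/16}$. In that range your Cauchy--Schwarz step gives
\[
\sT_{4,2}(N;j,q)\ \ll\ \frac{N^{8}}{q}+q^{2+o(1)}\sT_{2,2}(N;j,q)\ \ll\ q^{2+o(1)}N^{2},
\]
which is astronomically larger than the target $N^{5+o(1)}+N^{53/8+o(1)}/q^{1/8}$ (for example, at $N=q^{1/20}$ you obtain $q^{2.1}$ versus the target $q^{1/4}$). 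Your level-set refinement does not help: the contribution of the dyadic level $|S(c)|\sim V$ is $\ll V^{4}q\sT_{2,2}$, which is maximised at the largest admissible $V$; since for $N\le q^{1/2}$ that maximum is $V\sim N$, you simply recover the trivial inequality $\sT_{4,2}\le N^{4}\sT_{2,2}$, i.e.\ \eqref{eq:T42123}. There is no interpolation between $|S(c)|\le 2N$ and $|S(c)|\ll q^{1/2}$ that saves anything here, because the second input is never better than the first. Likewise, your assertion that the $c\ne 0$ contribution is ``absorbed into $N^{5+o(1)}$ in the small-$N$ regime'' is incorrect: $q^{2}N^{2}\le N^{5}$ would require $N\ge q^{2/3}$.

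The paper's proof is not Fourier-analytic at all. It pigeonholes twice to reduce to bounding $\Delta^{4}K^{2}\#\cF$, where $\cD=\{d:(\cA\circ\cA)(d)\sim\Delta\}$ and $\cF=\{f:(\cD\circ\cD)(f)\sim K\}$, and then applies a dedicated geometry-of-numbers lemma (Corollary~\ref{cor:4vbD}, built on Lemma~\ref{lem:shortpoint3}) which, for each $f\in\cF$, embeds the relevant solutions into a rank-$3$ lattice in $\R^{3}$ and extracts a short dual vector via transference. The arithmetic input is an iterated use of the identity $(a_{1}^{2}-a_{2}^{2}-d^{2})^{2}=4d^{2}a_{2}^{2}$ together with the structure of $\cD$ coming from~\cite[Lemma~6.4]{DKSZ}; the exponent $5/8$ arises from the case analysis in that corollary, not from any Fourier moment. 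Nothing resembling the Sali\'e completion appears; the whole argument lives on the physical side.
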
  
 
We see that Theorem~\ref{thm:T42-N} is sharper than~\eqref{eq:T42123} provided $N\le q^{1/16}$. 
The  proofs of Theorem~\ref{thm:T22-N} and Theorem~\ref{thm:T42-N} are based on the geometry of 
numbers and in particular on some properties of lattices.

Next we generalise~\eqref{eq:Energy-SSZ-N}  to higher order roots.  
In fact, as in~\cite{SSZ1} the methods allow us to 
also treat the natural extension of $\sE_{k} (N;j,q)$ to composite moduli $q$,  for which we consider equations in the
residue ring $\Z_q$ modulo $q$, and estimate  $\sE_{k} (N;j,q)$  for almost all positive integers $q$. 
We however restrict ourselves to the case of prime moduli $q$. 

\begin{theorem}
\label{thm:E2k} 
For a fixed $k \ge 3$ and any  positive integers $Q \ge  N \ge 1$, we have 
$$
  \frac{\log Q}{Q}   \sum_{\substack{ q \sim Q \\ q~\text{prime}}}  \max_{j \in \F_q^*} 
\sE_{k} (N;j,q)  \ll   N^2  + N^{4}  Q^{-1+o(1)} . 
$$
\end{theorem}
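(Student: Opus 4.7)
The plan is to adapt the argument of~\cite{SSZ1} developed for $k = 2$ to general $k \ge 3$, splitting $\sE_k(N;j,q)$ into a diagonal and an off-diagonal contribution. Let
$$
\cM_j = \{b \in \Fq^* : b^k \equiv j n \pmod{q} \text{ for some } n \in [1,N]\}.
$$
Since each $n \in [1,N]$ has at most $k$ $k$-th roots modulo $q$, we have $\# \cM_j \le k N$. Thus the diagonal solutions, meaning those with $\{b_1,b_2\} = \{b_3,b_4\}$ as multisets in $\Fq$, contribute at most $2 k^2 N^2 \ll N^2$ uniformly in $q$ and $j$, which yields the $N^2$ term after averaging over primes.

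For the off-diagonal contribution, I lift to integers. Taking representatives $b_i \in [1, q-1]$ and $n_i \in [1,N]$, and eliminating $j$ from the congruences $b_i^k \equiv j n_i \pmod{q}$, the conditions become the integer equations
\begin{equation*}
b_1 + b_2 - b_3 - b_4 = m q, \qquad n_1 b_i^k - n_i b_1^k = q t_i \quad (i = 2, 3, 4),
\end{equation*}
with $|m| \le 2$ and $|t_i| \ll Q^{k-1} N$. Moreover, the extremal $j = j^*(q)$ is determined by any compatible tuple via $j^*(q) \equiv b_1^k n_1^{-1} \pmod{q}$, so each integer tuple contributes to $\sE_k(N; j, q)$ for at most one value of $j$. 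The strategy is to switch the order of summation and bound, for each integer tuple in $[1, 2Q]^4 \times [1, N]^4$, the number of primes $q \sim Q$ consistent with these congruences.

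The integer tuples split into two cases. In the \emph{degenerate case}, where $A := b_1 + b_2 - b_3 - b_4$ and $B_i := n_1 b_i^k - n_i b_1^k$ all vanish as integer identities, the relation $b_i^k / b_1^k = n_i / n_1$ forces the parametrization $b_i = g c_i$, $n_i = c_i^k n_0$ with positive integers $g, c_i, n_0$ satisfying $c_1 + c_2 = c_3 + c_4$ and $c_i^k n_0 \le N$. The extra constraint $g^k \equiv j^*(q) n_0 \pmod{q}$ restricts $g$ modulo $q$ to at most $k$ residues per $n_0$, and summing the number of additive quadruples $(N/n_0)^{3/k}$ over $n_0 \le N$ (which, crucially, uses $k \ge 3$ for summability up to a logarithm) gives $\ll N^{1+o(1)}$ compatible tuples per prime, hence $O(N^{1+o(1)})$ after averaging, absorbed in the $N^2$ term. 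In the \emph{non-degenerate case}, some $A$ or $B_i$ is a non-zero integer of absolute value $\ll Q^{k+1}$, which therefore has only $O_k(1)$ prime divisors in $[Q, 2Q]$; hence each such tuple contributes to $O(1)$ primes.

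The main obstacle is controlling the non-degenerate case: the box $[1, 2Q]^4 \times [1, N]^4$ naively contains $\sim Q^4 N^4$ integer tuples, far more than the $N^{4+o(1)}$ total contribution the theorem requires. The saving must come from the activation constraint, which demands that $q \sim Q$ simultaneously divide several polynomial quantities in the tuple's coordinates. I expect the argument to proceed by a sub-case analysis depending on which of the $B_i$ happen to vanish identically, combined with the bound $\tau(n) \ll n^{o(1)}$ and counts of integer points on algebraic varieties of controlled degree, to reduce the effective dimension of the search space to match the target $N^{4+o(1)}$. The hypothesis $k \ge 3$ is used both in the summability of the degenerate-case count and, more substantially, in the divisor-theoretic estimates for the higher-degree polynomial equations arising in the non-degenerate case.
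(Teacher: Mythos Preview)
Your proposal has a genuine gap: the non-degenerate case is not proved, only hoped for. You explicitly flag ``the main obstacle'' --- that the search space $[1,2Q]^4\times[1,N]^4$ contains $\sim Q^4N^4$ integer tuples --- and then defer to an unspecified ``sub-case analysis'' and ``counts of integer points on algebraic varieties of controlled degree''. No such argument is supplied, and it is not clear one exists along these lines. After switching the order of summation, each non-degenerate tuple contributes to $O(1)$ primes $q\sim Q$, but you still must bound the \emph{number} of non-degenerate tuples that arise as solutions for some prime in the range; that count is not controlled by anything smaller than the very sum you are trying to estimate. The four congruence constraints you list are modulo a \emph{varying} $q$, so they do not cut down the integer search space before the sum over $q$ is executed.

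The paper avoids this obstacle by a different choice of variables. The key construction (Section~\ref{sec:prod poly}) is an explicit homogeneous polynomial $F_k\in\Z[X_1,X_2,X_3,X_4]$ with the property that $u+v=x+y$ forces $F_k(u^k,v^k,x^k,y^k)=0$. By homogeneity, the congruence $u+v\equiv x+y\pmod q$ with $ju^k,jv^k,jx^k,jy^k\in[1,N]$ implies $F_k(ju^k,jv^k,jx^k,jy^k)\equiv 0\pmod q$; hence $\sE_k(N;j,q)$ is bounded, uniformly in $j$, by the number of $(U,V,X,Y)\in[1,N]^4$ with $F_k(U,V,X,Y)\equiv 0\pmod q$. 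Now the search space is $[1,N]^4$ rather than $[1,2Q]^4\times[1,N]^4$, and the switch of summation works cleanly: tuples with $F_k(U,V,X,Y)\neq 0$ contribute $\ll N^{4+o(1)}$ in total since a nonzero integer of polynomial size in $N$ has $O(\log N)$ prime divisors, while tuples with $F_k(U,V,X,Y)=0$ are handled by Lemma~\ref{lem: Bound T(N)}, which gives $O(N^2)$ via linear-independence results for real $k$-th roots of integers. Your degenerate case is the analogue of this last lemma, but your decision to parametrise by the roots $b_i$ (range $\sim Q$) rather than by the shifted powers $jb_i^k$ (range $\le N$) is precisely what creates the uncontrolled off-diagonal.
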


To establish Theorem~\ref{thm:E2k} we use some arguments related to norms of algebraic integers. 
 
We now extend the bound~\eqref{eq:Energy-SSZ-Set-N} to other values of $k$ as follows.

\begin{theorem}
\label{thm:E2k-Set} 
	Let  $\cN \subseteq \Fq$  be a set of cardinality $\# \cN = N \le q^{2/3}$ 
	such that   $\#\(\cN + \cN\)   \le LN$ for some real $L$. 
Then for $k \ge 3$ we have 
$$
 E_{k}(\cN;q)\le   L^{\vartheta_k} N^{3-\rho_k } q^{o(1)} \,, 
$$
where
$$
 \rho_k = 1/(7\cdot 2^{k-1}-9) \quad \text{and}\quad 
\vartheta_k = \begin{cases}
2^{k+2} \rho_k, & \text{ for $k=3$ and $k\ge 5$};\\
48/47, & \text{ for $k=4$}.
\end{cases} 
$$
\end{theorem}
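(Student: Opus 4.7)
The plan is to reduce $E_k(\cN;q)$ to a higher-moment problem for the representation function of $\cR = \{b \in \Fq : b^k \in \cN\}$, and then attack that moment by iterated Cauchy--Schwarz together with the polynomial structure of $k$-th roots. We have $\#\cR \le kN$, and by a direct rearrangement $E_k(\cN;q) = E(\cR) = \sum_d r_\cR(d)^2$ with $r_\cR(d) = \#\{a \in \cR : a - d \in \cR\}$. The Pl\"unnecke--Ruzsa inequality applied to the hypothesis $\#(\cN + \cN) \le LN$ gives $\#(s\cN - t\cN) \le L^{s+t} N$ for all integers $s, t \ge 0$; in particular $\#(\cN - \cN) \le L^2 N$.

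The key algebraic ingredient is the polynomial identity $P_d(x) = x^k - (x - d)^k \in \Fq[x]$, of degree $k - 1$ when $d \ne 0$. If $a$ and $a - d$ both belong to $\cR$, then $P_d(a) = a^k - (a - d)^k \in \cN - \cN$, and since each value of $P_d$ has at most $k - 1$ preimages, we obtain the pointwise bound $r_\cR(d) \le (k - 1)\#(\cN - \cN) \le (k - 1) L^2 N$. This alone yields only the trivial $E(\cR) \le L^2 N^3$. To extract the claimed power saving in $N$ one must bound the higher moment
$$
T_\nu(\cR) = \sum_d r_\cR(d)^\nu
$$
for a dyadic parameter $\nu \sim 2^{k-1}$. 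The strategy is to iterate Cauchy--Schwarz roughly $k - 1$ times: at each round one rewrites a partial moment as a count of solutions to a $P_d$-level-set identity, controls the new sumset appearing after expansion by the Pl\"unnecke--Ruzsa bound $\#(s\cN - t\cN) \le L^{s+t} N$, and doubles the number of constraints while halving the number of free variables. This produces
$$
T_\nu(\cR) \le L^{A_k} N^{\nu + 1 - (\nu - 1)\rho_k} q^{o(1)}, \qquad A_k = (\nu - 1)\vartheta_k,
$$
from which H\"older's inequality $E(\cR) \le (\#\cR)^{2(\nu - 2)/(\nu - 1)} T_\nu(\cR)^{1/(\nu - 1)}$ yields the desired bound $E_k(\cN;q) \le L^{\vartheta_k} N^{3 - \rho_k} q^{o(1)}$ with $\vartheta_k = 2^{k+2}\rho_k$, handling $k = 3$ and $k \ge 5$. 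For $k = 4$ the factorization $4 = 2 \cdot 2$ provides extra leverage: a fourth root of $\cN$ is a square root of an element of the square-root set of $\cN$, so the case-$k = 2$ bound~\eqref{eq:Energy-SSZ-Set-N} can be inserted at an intermediate stage to save the small factor that replaces $64/47$ by $\vartheta_4 = 48/47$.

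The main obstacle is the higher-moment estimate for $T_\nu(\cR)$. The pointwise polynomial bound is too lossy, and the Cauchy--Schwarz iteration must be organised so that at each round the $L$-exponent grows only by a bounded factor while the $N$-savings compound geometrically. The resulting exponent recursion $\rho_k^{-1} + 9 = 2(\rho_{k-1}^{-1} + 9)$, whose solution is $\rho_k = 1/(7 \cdot 2^{k-1} - 9)$, directly reflects this doubling, with the additive constant $9$ arising from the degree $k - 1$ of $P_d$ together with the Pl\"unnecke--Ruzsa sumset exponents accumulated over the iteration. Finally, the hypothesis $N \le q^{2/3}$ is used to avoid $\Fq$-wraparound in the intermediate polynomial equations, ensuring that the $\Fq$-counting mirrors the archimedean one.
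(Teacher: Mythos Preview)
Your sketch has the right ingredients—iterated differencing, Pl\"unnecke--Ruzsa control of iterated sumsets, and eventual reduction to the quadratic estimate~\eqref{eq:Energy-SSZ-Set-N}—but the object you propose to bound is not the one that makes the argument work. The quantity $T_\nu(\cR)=\sum_d r_\cR(d)^\nu$ involves only a \emph{single} shift $d$: every tuple contributing to it yields constraints $P_d(a_i)\in\cN-\cN$ with the \emph{same} degree-$(k-1)$ polynomial $P_d$, and Cauchy--Schwarz on this sum alone does not lower that degree. The paper instead works with the Gowers norms $\|\cA\|_{\cU^k}$, which by~\eqref{f:Gowers_sums_A_s} and~\eqref{f:Gowers_sq_A} introduce $k-2$ \emph{independent} shifts $s_1,\ldots,s_{k-2}$; each new shift performs one step of Weyl differencing, reducing the polynomial degree by one, until after $k-2$ steps the constraint on $x\in\cA_{s_1,\ldots,s_{k-2}}$ is quadratic and~\eqref{eq:Energy-SSZ-Set-N} can be applied to bound $E(\cA_{s_1,\ldots,s_{k-2}})$. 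The return from $\|\cA\|_{\cU^k}$ to $E(\cA)=E_k(\cN;q)$ is via Lemmas~\ref{l:Gowers_char-1} and~\ref{l:Gowers_char-2}, not via H\"older on $T_\nu$. Your phrase ``doubles the number of constraints while halving the free variables'' is exactly the Gowers mechanism, but packaging it as a bound on $T_\nu(\cR)$ loses the multiple-shift structure that the degree reduction requires.

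Two further corrections. Your $k=4$ mechanism is wrong: one cannot iterate the $k=2$ bound through the factorisation $4=2\cdot2$, because the square-root set of $\cN$ has no reason to have small doubling. The genuine saving in $\vartheta_4$ comes from the fact that after two Weyl differences the resulting quadratic has values in $3\cN-3\cN$, for which Pl\"unnecke gives $\#(3\cN-3\cN)\le L^6 N$, sharper than the generic $\#(2^{k-1}\cN-2^{k-1}\cN)\le L^{2^k}N$ used for $k\ge 5$. Finally, the hypothesis $N\le q^{2/3}$ is not about $\Fq$-wraparound; it is precisely what ensures that when~\eqref{eq:Energy-SSZ-Set-N} is applied to the small sets $\cA_{s_1,\ldots,s_{k-2}}$ the second term dominates the first.
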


We remark that the exponent of $L$ in Theorem~\ref{thm:E2k-Set} is $\vartheta_3 = 32/19$ 
and  
$$
\vartheta_k = \frac{2^{k+2}}{ 7\cdot 2^{k-1}-9}  \le \frac{128}{103} 
$$
for $k \ge 5$. For $k=4$ the exponent of $L$ is better than generic because 
of some additional saving  in our application 
of the {Pl\"unnecke inequality\/}, see~\cite[Corollary~6.29]{TaoVu}. 

The proof is based on some ideas of Gowers~\cite{Gow_4, Gow_m}, in particular on the 
notion of the {\it Gowers norm\/}. 
Finally, we remark that it is easy to see that, actually, our method works for any polynomial not only for  monomials. 
Also, it is possible, in principle, to insert the general weight $\bbeta$ but the induction procedure requires complex  calculations to estimate this more general quantity 
$$
E_{k}(\cN;\bbeta, q) = \sum_{\substack{u,v,x,y \in \F_q\\
u^k, v^k, x^k, y^k \in\cN\\u+v=x+y}}\beta_u\beta_v \beta_x \beta_y.
$$
Nevertheless, we record a simple consequence of Theorem~\ref{thm:E2k-Set} with weights $\bbeta$,
 which follows from the pigeonhole principle.

\begin{corollary} 
	Let  $\cN \subseteq \Fq$  be a set of cardinality $\# \cN = N$ 
	such that   $\#\(\cN + \cN\)   \le LN$ for some real $L$. 
	Then for any weights  $\bbeta$  
	supported on $\cN$,  and with $\| \bbeta\|_\infty \le 1$ 	
	Then 
\[
 E_{k}(\cN;\bbeta, q)\le    L^{\vartheta_k}
    \| \bbeta\|^{2-2\rho_k}_1 
    \| \bbeta\|^{2+2\rho_k}_2  q^{o(1)}
    \,,
\]	
where $\vartheta_k$ and $\rho_k$ are as in Theorem~\ref{thm:E2k-Set}.
\end{corollary}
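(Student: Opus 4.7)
The plan is to dyadically decompose $|\bbeta|$ into level sets, reduce to unweighted energies on each level via Cauchy--Schwarz, invoke Theorem \ref{thm:E2k-Set}, and finally apply H\"older's inequality to package the bound in the two norms. For $j=0,1,\ldots,J$ with $J=O(\log q)$, set
\[
\cN_j=\{n\in\cN:\ 2^{-j-1}<|\beta_n|\le 2^{-j}\},\qquad N_j=\#\cN_j,
\]
discarding weights below $q^{-C}$ as negligible; note $\sum_j 2^{-j}N_j\asymp\|\bbeta\|_1$ and $\sum_j 2^{-2j}N_j\asymp\|\bbeta\|_2^2$.

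First, I would expand $E_k(\cN;\bbeta,q)$ multilinearly across the four variables, bound $|\beta_n|\le 2^{-j}$ on $\cN_j$, and apply Cauchy--Schwarz twice to the resulting unweighted count $T(\cN_{j_1},\cN_{j_2},\cN_{j_3},\cN_{j_4};q)\le \prod_{i}E_k(\cN_{j_i};q)^{1/4}$, obtaining
\[
|E_k(\cN;\bbeta,q)|\le \Bigl(\sum_{j=0}^{J}2^{-j}E_k(\cN_j;q)^{1/4}\Bigr)^{4}.
\]
Invoking Theorem \ref{thm:E2k-Set} on each level subset $\cN_j\subseteq\cN$ — driven by the ambient doubling constant $L$ rather than the naive $LN/N_j$ coming from its own smaller size — bounds $E_k(\cN_j;q)\le L^{\vartheta_k}N_j^{3-\rho_k}q^{o(1)}$, reducing the task to estimating $\bigl(\sum_j 2^{-j}N_j^{\alpha}\bigr)^4$ with $\alpha=(3-\rho_k)/4\in(\tfrac{1}{2},1)$.

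For this last step I would write $2^{-j}N_j^\alpha=(2^{-j}N_j)^{2\alpha-1}(2^{-2j}N_j)^{1-\alpha}$ (valid because $\alpha\in(1/2,1)$) and apply H\"older's inequality with exponents $1/(2\alpha-1)$ and $1/(1-\alpha)$, obtaining $\sum_j 2^{-j}N_j^\alpha\ll \|\bbeta\|_1^{2\alpha-1}\|\bbeta\|_2^{2(1-\alpha)}$. Raising to the fourth power and using the identities $4(2\alpha-1)=2-2\rho_k$ and $8(1-\alpha)=2+2\rho_k$ then delivers exactly the target $L^{\vartheta_k}\|\bbeta\|_1^{2-2\rho_k}\|\bbeta\|_2^{2+2\rho_k}q^{o(1)}$.

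The main obstacle is the subset energy estimate $E_k(\cN_j;q)\le L^{\vartheta_k}N_j^{3-\rho_k}q^{o(1)}$: a black-box application of Theorem \ref{thm:E2k-Set} to $\cN_j$ only yields the weaker $L^{\vartheta_k}N^{\vartheta_k}N_j^{3-\rho_k-\vartheta_k}q^{o(1)}$, and one must verify that the argument underlying Theorem \ref{thm:E2k-Set} actually furnishes subset bounds controlled by the ambient doubling constant of $\cN$, which is precisely what makes the pigeonhole step collapse cleanly into the claimed interpolation between $\|\bbeta\|_1$ and $\|\bbeta\|_2$.
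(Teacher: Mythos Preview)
Your dyadic-pigeonhole strategy is precisely what the paper has in mind: the only justification it offers is the remark that the corollary ``follows from the pigeonhole principle'', and your decomposition into level sets, Cauchy--Schwarz reduction to the unweighted energies $E_k(\cN_j;q)$, and final H\"older interpolation between $\|\bbeta\|_1$ and $\|\bbeta\|_2$ is the natural way to flesh this out. The numerology in your last paragraph is correct.

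You are also right to isolate the subset-energy estimate $E_k(\cN_j;q)\le L^{\vartheta_k}N_j^{3-\rho_k}q^{o(1)}$ as the crux, and right that a black-box appeal to Theorem~\ref{thm:E2k-Set} only yields the useless doubling constant $LN/N_j$. Your proposed fix --- rerunning the proof of Theorem~\ref{thm:E2k-Set} for the subset while keeping the ambient doubling --- is the correct instinct, but be aware it is a little more delicate than you suggest: the sumset bounds that drive that proof (for instance $\#(2\cN_j-2\cN_j)\le\#(2\cN-2\cN)\le L^4N$ in the case $k=3$) still carry the ambient $N$ rather than $N_j$, so a straightforward tracing gives something of the shape $E_k(\cN_j;q)\le L^{\vartheta_k}N^{c_k}N_j^{\,3-\rho_k-c_k}q^{o(1)}$ with $c_k>0$, not the clean bound you want. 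The paper does not supply these details either, so you have correctly located the only nontrivial point in the argument.
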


We also remark that  Theorem~\ref{thm:E2k-Set} can be reformulated as a statement that for any set $\cA \subseteq \F_q$ either the additive energy  $\#\{a_1+a_2 = a_3+a_4:~a_1, a_2 ,a_3, a_4\in \cA\}$
 of $\cA$ is small or $\cA^k$ has  large doubling set $\cA^k + \cA^k =\{a_1^k+a_2^k:~a_1, a_2  \in \cA\}$.

\section{Applications}
Given weights $\balpha,\bbeta$ we define bilinear forms over modular square roots as in~\cite[Equation~(1.6)]{DKSZ}
\begin{equation} \label{Wdef1}
W_{a,q}(\balpha, \bbeta; h,M,N)=   \sum_{m \sim M}  \sum_{n \sim N} \alpha_m \beta_n \sum_{\substack{x \in \Fq \\
x^2 = amn}} \eq(hx).
\end{equation} 
Using Theorem~\ref{thm:T22-N} we obtain a new estimate for $W_{a,q}(\balpha, \bbeta; h,M,N)$ which improves on~\cite[Theorem~1.7]{DKSZ}. Assuming 
$$
\|\balpha\|_{\infty}, \|\bbeta\|_{\infty} \le 1,
$$
it follows from the proof of~\cite[Theorem~1.7]{DKSZ} that 
$$
|W_{a,q}(\balpha, \bbeta; h,M,N)|^8\le q^{1+o(1)}(NM)^{4}\sT_{2,2}(N;b,q) \sT_{2,2}(M;1,q),
$$
for some $b$ with $\gcd(b,q)=1$. 

Applying Theorem~\ref{thm:T22-N}, we obtain the following bound. 

\begin{corollary} \label{cor:Waq}
For any positive integers $M,N\le q/2$ and any weights $\balpha$  and $\bbeta$ satisfying 
$$
\|\balpha\|_{\infty}, \|\bbeta\|_{\infty} \le 1,
$$
we have 
$$
 |W_{a,q}(\balpha,\bbeta;h,M,N)|\le q^{1/8+o(1)}(NM)^{3/4}\left(\frac{N^{3/16}}{q^{1/16}}+1\right)\left(\frac{M^{3/16}}{q^{1/16}}+1\right). 
$$
\end{corollary}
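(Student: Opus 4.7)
The proof will be a direct application of Theorem~\ref{thm:T22-N} to the eighth-power estimate recalled before the statement of Corollary~\ref{cor:Waq}. Since that estimate already reduces the bilinear sum to the two energies $\sT_{2,2}(N;b,q)$ and $\sT_{2,2}(M;1,q)$ (with some $b$ coprime to $q$), no further combinatorial work is needed; the only task is to substitute and extract the eighth root carefully.

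First I would invoke Theorem~\ref{thm:T22-N} twice, once with parameters $(N,b)$ and once with $(M,1)$, to get
$$
\sT_{2,2}(N;b,q)\sT_{2,2}(M;1,q)\ll \left(\frac{N^{3/2}}{q^{1/2}}+1\right)\left(\frac{M^{3/2}}{q^{1/2}}+1\right)(NM)^{2+o(1)}.
$$
Plugging into the inequality
$$
|W_{a,q}(\balpha, \bbeta; h,M,N)|^{8}\le q^{1+o(1)}(NM)^{4}\,\sT_{2,2}(N;b,q)\,\sT_{2,2}(M;1,q)
$$
yields
$$
|W_{a,q}(\balpha, \bbeta; h,M,N)|^{8}\le q^{1+o(1)}(NM)^{6}\left(\frac{N^{3/2}}{q^{1/2}}+1\right)\left(\frac{M^{3/2}}{q^{1/2}}+1\right).
$$

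Taking eighth roots gives $q^{1/8+o(1)}(NM)^{3/4}$ in front, and it remains to simplify the factors of the form $\bigl(N^{3/2}/q^{1/2}+1\bigr)^{1/8}$. Using the elementary inequality $(a+b)^{1/8}\le a^{1/8}+b^{1/8}$ for $a,b\ge 0$ (or, more symmetrically, $(a+b)^{1/8}\ll \max(a,b)^{1/8}\ll a^{1/8}+b^{1/8}$), these factors are bounded by $N^{3/16}/q^{1/16}+1$ and $M^{3/16}/q^{1/16}+1$ respectively, producing the stated bound.

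There is essentially no obstacle here; the only point requiring a word of care is that the cited eighth-power inequality from the proof of~\cite[Theorem~1.7]{DKSZ} involves $\sT_{2,2}(N;b,q)$ for a certain $b$ with $\gcd(b,q)=1$ rather than $\sT_{2,2}(N;1,q)$, but since Theorem~\ref{thm:T22-N} is uniform in $j\in\Fq^{*}$ this causes no loss. The minor inconvenience of the elementary inequality $(a+b)^{1/8}\le a^{1/8}+b^{1/8}$ is absorbed into the implicit constants (and, if desired, into the $q^{o(1)}$ factor).
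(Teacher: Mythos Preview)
Your proposal is correct and follows exactly the approach the paper indicates: the paper simply says ``Applying Theorem~\ref{thm:T22-N}, we obtain the following bound'' after displaying the eighth-power inequality, and you have filled in precisely those details. The substitution, eighth root, and elementary inequality $(a+b)^{1/8}\le a^{1/8}+b^{1/8}$ are all that is needed.
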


If the sequence $\bbeta$ corresponds to values of a smooth function $\varphi$ whose derivatives and support $\supp \varphi$ satisfy
\begin{equation}
\label{eq:cond phi}
\varphi^{(j)}(x)\ll \frac{1}{x^{j}} \mand  \supp \varphi \subseteq [N,2N],
\end{equation}
then we write
\begin{equation}
\label{eq:Vaq}
V_{a,q}(\balpha, \varphi;h,M,N)=\sum_{m\sim M}\sum_{n\in \Z}\alpha_m\varphi(n)\sum_{\substack{x\in \Fq \\ x^2=amn}}e_q(hx).
\end{equation}

We now give a new bound for $V_{a,q}(\balpha;h,M,N)$. This does not rely on energy estimates although may be of independent interest. 
It is also used in a combination with Corollary~\ref{cor:Waq}  to derive Theorem~\ref{thm:GammaqP}  below.

\begin{theorem} \label{thm:b1}
For any positive integers $M,N$ satisfying $MN \ll q$ and $M<N$, any weight $\balpha$ satisfying 
$$
\|\balpha\|_{\infty} \le 1,
$$
and a function  $\varphi$  satisfying~\eqref{eq:cond phi}, we have 
$$
|V_{a,q}(\balpha, \varphi;h,M,N)| \le
 q^{1/2-1/4r+o(1)}N^{1/2r}M^{1-1/2r}\(1+\frac{(MN)^{1/2}}{q^{1/2-1/4r}}\).
$$
\end{theorem}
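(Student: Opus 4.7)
The plan is to apply the quadratic Gauss sum evaluation to the innermost sum over $x$. Writing $\chi$ for the Legendre symbol modulo $q$ and $\varepsilon_q$ for the standard Gauss sum sign, one has, for $t = amn$,
\[
\sum_{x^2 = amn}e_q(hx) = \frac{\varepsilon_q}{\sqrt q}\sum_{u \in \F_q^*}\chi(u)e_q\bigl(-u amn - h^2 (4u)^{-1}\bigr),
\]
where $(4u)^{-1}$ is the inverse of $4u$ modulo $q$. Substituting this into the definition of $V_{a,q}$ and swapping the order of summation so that the $n$-sum is innermost, one can use the smoothness of $\varphi$ (through the rapid decay of its Fourier transform $\hat\varphi$, giving $|\hat\varphi(\xi)|\ll N(1+N|\xi|)^{-A}$) to estimate $\sum_n\varphi(n)e_q(-uamn)$ by a smooth localized weight $\Phi(uam)$ concentrated on $\|uam/q\|_{\R/\Z} \ll q^{o(1)}/N$, of size $O(N)$. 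Thus, up to negligible errors,
\[
V_{a,q} = \frac{\varepsilon_q}{\sqrt q}\sum_{u \in \F_q^*}\chi(u)e_q\bigl(-h^2(4u)^{-1}\bigr)\sum_{m\sim M}\alpha_m \Phi(uam).
\]

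Next, I would apply H\"older's inequality on the $u$-sum with exponents $2r/(2r-1)$ and $2r$. The $L^{2r/(2r-1)}$-norm of the weight is bounded by $N^{1/(2r)}q^{(2r-1)/(2r)+o(1)}$ using the effective support of $\Phi$ and the standard bound $\sum_u|\Phi(uam)|^p \ll N^{p-1}q$. For the $2r$-th moment of the inner sum $\Sigma(u) = \sum_m\alpha_m \Phi(uam)$, expand the power into a sum over tuples $(\vec m,\vec m')$, swap with the $u$-sum, and invoke Weil's bound on the resulting complete Sali\'e-type character sum
\[
\sum_{u \in \F_q^*}\chi(u)e_q\bigl(Au + B u^{-1}\bigr) \ll q^{1/2+o(1)},
\]
which provides a saving of $q^{1/2}$ beyond the trivial Parseval estimate. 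Combining these estimates yields the main term $q^{1/2-1/(4r)+o(1)}N^{1/(2r)}M^{1-1/(2r)}$.

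The main obstacle is to carry out the H\"older step in a way that extracts the full saving of $q^{1/(4r)}$ past the naive Cauchy--Schwarz bound of $q^{1/2}$: this requires precisely aligning the Weil cancellation on the Sali\'e sum in $u$ with the $L^{2r}$-averaging in $m$. The secondary term $(MN)^{1/2}/q^{1/2 - 1/(4r)}$ in the theorem arises from the diagonal contribution in the $2r$-fold expansion, corresponding to tuples satisfying $\sum_i m_i \equiv \sum_j m_j' \pmod q$: here the Sali\'e cancellation degenerates and the inner $u$-sum contributes the trivial size $q$ in place of $q^{1/2}$.
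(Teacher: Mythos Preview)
Your opening move is fine and in fact equivalent to the paper's: opening the square-root detector with a Gauss sum and then summing over $n$ is the same as the paper's Poisson step, and after the change of variables $n\equiv uam\pmod q$ one lands on
\[
V_{a,q}\approx \frac{1}{\sqrt q}\sum_{m\sim M}\alpha_m\,\chi(am)\sum_{|n|\ll q^{1+o(1)}/N}\widehat\varphi\!\left(-\frac{n}{q}\right)\chi(n)\,e_q\bigl(cm\overline n\bigr),
\]
exactly as in the paper.

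The gap is the H\"older step. You take $\Sigma(u)=\sum_m\alpha_m\Phi(uam)$ and propose to bound $\sum_u|\Sigma(u)|^{2r}$ via Weil on a Sali\'e sum. But $\chi(u)$ and $e_q\bigl(-h^2\overline{4u}\bigr)$ have modulus~$1$, so they vanish from $|\Sigma(u)|^{2r}$; what remains after opening each $\Phi$ is
\[
\sum_u|\Sigma(u)|^{2r}
=\sum_{\vec m,\vec m',\vec n,\vec n'}(\text{smooth weights})\sum_{u\in\F_q}e_q\!\Bigl(-ua\bigl(\textstyle\sum_i m_in_i-\sum_j m'_jn'_j\bigr)\Bigr),
\]
which is pure orthogonality in $u$, not a sum of the shape $\sum_u\chi(u)e_q(Au+B\overline u)$. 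No Weil saving is available here, and the claimed gain of $q^{1/2}$ does not materialise. (Your description of the ``diagonal'' as $\sum m_i\equiv\sum m'_j$ is also not what this expansion produces.)

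The ingredient you are missing is a van~der~Corput--Weyl shift on the dual variable. Starting from the display above, the paper replaces $n\mapsto n+um$, averages over $1\le u\le U=q/(MN)$, and after partial summation and the substitution $\lambda\equiv n\overline m\pmod q$ reaches
\[
V_{a,q}\ll \frac{N^{1+o(1)}}{q^{1/2}U}\sum_{\lambda\in\F_q}I(\lambda)\,\Bigl|\sum_{1\le u\le U_0}\chi(\lambda+u)\,e_q\bigl(c\,\overline{\lambda+u}\bigr)\Bigr|,
\]
with $I(\lambda)$ the multiplicity of $\lambda\equiv n\overline m$. Now H\"older is applied in $\lambda$ (not in $u$), with the short $u$-sum kept intact inside the $2r$-th power; expanding that power genuinely yields complete sums in $\lambda$ of products $\prod_i\chi(\lambda+u_i)e_q(c\,\overline{\lambda+u_i})$, to which the Weil bound applies off the diagonal. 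The first and second moments of $I(\lambda)$ supply the remaining factors. Your argument can be repaired by inserting this shift-and-average step before H\"older.
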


Corollary~\ref{cor:Waq} may be used to improve various results from~\cite[Sections~1.3-1.4]{DKSZ}. We present once such improvement to the distribution of modular roots of primes.  Recall that  the {\it discrepancy\/} $D(N) $ of a sequence in $\xi_1, \ldots, \xi_N \in [0,1)$    is defined as 
$$
D_N = 
\sup_{0\le \alpha < \beta \le 1} \left |  \#\{1\le n\le N:~\xi_n\in [\alpha, \beta)\} -(\beta-\alpha)  N \right |.
$$

For a positive integer $P$  we denote the discrepancy of the sequence (multiset) of points
$$
 \{x/q:~ x^2 \equiv p \bmod q \ \text{for some prime} \ p\le P\}
$$
by $\Gamma_q(P)$.
 Combining the Erd\"{o}s-Tur\'{a}n inequality with the Heath-Brown identity reduces estimating $\Gamma_q(P)$ to sums of the form~\eqref{Wdef1} and~\eqref{eq:Vaq}. 
Combining, Corollary~\ref{cor:Waq}  with Theorem~\ref{thm:b1}, 
we obtain an improvement on~\cite[Theorem~1.10]{DKSZ}.
 
\begin{theorem}
\label{thm:GammaqP}
For any $P\le q^{10/11}$ we have 
$$
\Gamma_q(P)\le \(P^{15/16}+q^{1/8}P^{3/4}+q^{1/16}P^{69/80}+q^{13/88}P^{3/4}\)  q^{o(1)}.
$$
\end{theorem}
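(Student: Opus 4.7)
The plan is to follow the standard reduction from discrepancy to exponential sums, then decompose the sum over primes using a Heath-Brown identity, and finally estimate the resulting Type I and Type II pieces using Theorem~\ref{thm:b1} and Corollary~\ref{cor:Waq}, respectively. I would begin with the Erd\H{o}s--Tur\'an inequality applied to the multiset $\{x/q:\ x^2\equiv p \pmod q,\ p\le P\}$, which bounds $\Gamma_q(P)$ by
$$
\frac{P}{H} + \sum_{1 \le h \le H} \frac{1}{h} \left| \sum_{p \le P} \sum_{\substack{x \in \Fq \\ x^2 = p}} \eq(hx) \right|
$$
for a parameter $H$ to be chosen at the end, thereby reducing the problem to estimating the inner exponential sum over primes uniformly in $h$.

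To this inner sum I would apply a Heath-Brown identity of sufficiently high fixed level (as in~\cite[Chapter~13]{IwKow}), followed by a dyadic decomposition. This expresses the sum as $P^{o(1)}$ convolution sums of the form $\sum_{m \sim M}\sum_{n \sim N}\alpha_m\beta_n\eq(h\sqrt{amn})$ with $MN \asymp P$. With a threshold $U = P^{\theta}$ to be optimised, the pieces in which one variable ranges over an interval and the associated coefficient can be replaced by a smooth function (after at most a partial summation) match the shape of $V_{a,q}(\balpha,\varphi;h,M,N)$ in~\eqref{eq:Vaq} and are treated as Type I; the pieces with both coefficients supported on dyadic blocks of lengths $\ge U$ match $W_{a,q}(\balpha,\bbeta;h,M,N)$ in~\eqref{Wdef1} and are treated as Type II.

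I would then bound every Type II sum by Corollary~\ref{cor:Waq}. In the balanced regime $M, N \le q^{1/3}$ this gives $\ll q^{1/8+o(1)} P^{3/4}$, which yields the term $q^{1/8}P^{3/4}$ of the theorem; outside this regime one must track the extra factors $M^{3/16}q^{-1/16}$ and $N^{3/16}q^{-1/16}$, and balancing against the Heath-Brown split produces the hybrid term $q^{1/16}P^{69/80}$. Each Type I sum is bounded via Theorem~\ref{thm:b1} with an optimally chosen integer $r$, which after further optimising in the threshold $U$ contributes the term $q^{13/88}P^{3/4}$. Finally, choosing $H$ to balance the pooled bilinear bound against the Erd\H{o}s--Tur\'an term $P/H$ produces the remaining contribution $P^{15/16}$.

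The main obstacle, I expect, is the simultaneous optimisation of the three parameters $H$, $U$ and $r$ together with the bookkeeping needed to verify the compatibility constraints $MN \ll q$ of Theorem~\ref{thm:b1} and $M, N \le q/2$ of Corollary~\ref{cor:Waq} across all dyadic sub-ranges produced by the Heath-Brown identity. In particular, the restriction $P \le q^{10/11}$ should emerge as precisely the condition that guarantees the four listed terms cover all cases and that no additional term dominates the bound.
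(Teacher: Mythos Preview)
Your overall strategy---Erd\H{o}s--Tur\'an reduction, Heath--Brown identity, then Corollary~\ref{cor:Waq} for Type~II pieces and Theorem~\ref{thm:b1} for Type~I pieces---matches the paper's. However, several attributions in your sketch are misplaced and the one-threshold plan is too coarse to recover the stated exponents.

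First, the term $P^{15/16}$ does \emph{not} come from balancing the Erd\H{o}s--Tur\'an remainder $P/H$. It is intrinsic to Corollary~\ref{cor:Waq}: with $MN\asymp P$, expanding the product of the two brackets gives a term $(MN)^{3/16}q^{-1/8}$, so that $q^{1/8}P^{3/4}\cdot P^{3/16}q^{-1/8}=P^{15/16}$ appears in \emph{every} application of the Type~II bound. The Erd\H{o}s--Tur\'an cutoff can then be taken large and absorbed into the $q^{o(1)}$.

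Second, a single Type~I/Type~II threshold $U$ is not how the paper proceeds, and does not by itself yield $q^{1/16}P^{69/80}$. The paper splits into four ranges of the longest smooth variable $N_1$ using three thresholds $H,L,R$ (this $H$ is a Heath--Brown block-size parameter, not the Erd\H{o}s--Tur\'an cutoff), applying Corollary~\ref{cor:Waq} in the first three ranges after regrouping variables so that the factor $M$ lies in a controlled window, and Theorem~\ref{thm:b1} only in the last range. The choice $H=P^{1/5}$, $L=P^{1/4}$ is what produces $q^{1/16}P^{69/80}$.

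Third, the paper does not optimise over $r$: it applies Theorem~\ref{thm:b1} with $r=2$ fixed, obtaining $q^{3/8+o(1)}P^{3/4}N_1^{-1/2}$ for $N_1\ge R$, and then balances this against the worst Type~II term $q^{1/16}P^{3/4}R^{3/16}$ to get $R=q^{5/11}$ and hence $q^{13/88}P^{3/4}$.
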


Note that Theorem~\ref{thm:GammaqP} is nontrivial provided $P\ge q^{13/22}$ and improves on the range $P\ge q^{13/20}$ from~\cite[Theorem~1.10]{DKSZ}.

\section{Proof of Theorem~\ref{thm:T22-N}}

\subsection{Lattices}

 We use $\vol(B)$ to denote the volume of a body $B \subseteq \R^d$. For a lattice $\Gamma\subseteq \R^{d}$ we recall that the quotient space  $\R^d/\Gamma$
 (called the fundamental domain) is compact and so $\vol(\R^d/\Gamma)$ is correctly defined, see also~\cite[Sections~3.1 and~3.5]{TaoVu}
 for basic definitions and properties of lattices.
 In particular, we  define the successive minima $\lambda_j$, $j=1, \ldots, d$,  of $B$ with respect to $\Gamma$ as
 $$
\lambda_j =\inf\{\lambda >0:~\lambda B \text{ contains $j$ linearly independent elements of } \Gamma\},
$$
where $\lambda B$ is the homothetic image of $B$ with the coefficient $\lambda$.

  The following is Minkowski's second theorem, for a proof see~\cite[Theorem~3.30]{TaoVu}. 
  
 \begin{lemma}
\label{lem:mst}
Suppose $\Gamma \subseteq \R^{d}$ is a lattice of rank $d$, $B\subseteq \R^{d}$ a symmetric convex body and let $\lambda_1,\ldots,\lambda_d$ denote the successive minima of $\Gamma$ with respect to $B$. Then we have
$$
\frac{1}{\lambda_1\ldots\lambda_d}\le \frac{d!}{2^d}\frac{ \vol (B)}{\vol(\R^d/\Gamma)}.
$$
\end{lemma}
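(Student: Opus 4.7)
This is the easy direction of Minkowski's second theorem. My plan is to reduce to a convenient coordinate system via a linear change of variables, after which the bound follows from two elementary containment statements: one producing a sublattice of $\Gamma$ of known covolume, and one producing a cross-polytope inscribed in $B$.

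First I would fix extremal lattice vectors. By the definition of the successive minima (and a standard closure/approximation argument, since both sides of the desired bound behave continuously under mild perturbations of $B$), one can choose linearly independent vectors $v_1,\ldots,v_d \in \Gamma$ with $v_j \in \lambda_j \bar B$ for each $j=1,\ldots,d$. Next I would normalize via the linear isomorphism $\phi \colon \R^d \to \R^d$ determined by $\phi(e_j) = v_j/\lambda_j$, where $e_1,\ldots,e_d$ is the standard basis; $\phi$ is invertible because the $v_j$ are linearly independent. Setting $\Gamma' = \phi^{-1}(\Gamma)$ and $B' = \phi^{-1}(B)$, the ratio $\vol(B)/\vol(\R^d/\Gamma)$ is preserved (both numerator and denominator are multiplied by $|\det\phi^{-1}|$), and the successive minima of $B'$ with respect to $\Gamma'$ are still $\lambda_1,\ldots,\lambda_d$. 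By construction $\phi^{-1}(v_j) = \lambda_j e_j$, so on the one hand $\lambda_j e_j \in \Gamma'$, and on the other $\lambda_j e_j \in \lambda_j B'$, i.e.\ $e_j \in B'$.

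Two elementary bounds then finish the job. For the \emph{sublattice bound}, the lattice $L = \lambda_1\Z e_1 + \cdots + \lambda_d \Z e_d$ lies inside $\Gamma'$, so
$$
\vol(\R^d/\Gamma') \le \vol(\R^d/L) = \lambda_1\cdots\lambda_d.
$$
For the \emph{cross-polytope bound}, since $B'$ is symmetric and convex and contains each $e_j$ (hence also each $-e_j$), it contains the convex hull of $\pm e_1,\ldots,\pm e_d$, namely the cross-polytope $\{x\in\R^d \colon |x_1|+\cdots+|x_d| \le 1\}$, which has volume $2^d/d!$. Combining,
$$
\frac{\vol(B)}{\vol(\R^d/\Gamma)} \;=\; \frac{\vol(B')}{\vol(\R^d/\Gamma')} \;\ge\; \frac{2^d/d!}{\lambda_1\cdots\lambda_d},
$$
which rearranges to the claimed inequality.

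The main obstacle here is essentially bookkeeping: one must verify carefully that $\phi^{-1}$ preserves both the volume ratio and the successive minima, and that any centrally symmetric convex body containing $\pm e_1,\ldots,\pm e_d$ must contain the cross-polytope. There is no genuinely hard step; by contrast the \emph{reverse} inequality $\lambda_1\cdots\lambda_d \vol(B) \le 2^d \vol(\R^d/\Gamma)$ (not needed here) is the delicate half of Minkowski's second theorem and requires a packing argument based on Minkowski's first theorem.
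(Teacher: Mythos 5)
Your proof is correct. For the record, the paper does not prove this lemma at all: it is quoted as (the easy half of) Minkowski's second theorem with a citation to Tao--Vu \cite[Theorem~3.30]{TaoVu}, so there is no internal argument to compare against. What you give is the standard self-contained proof of exactly the direction stated: pick linearly independent $v_j\in\Gamma$ with $v_j\in\lambda_j\bar B$, apply the linear change of variables sending $e_j\mapsto v_j/\lambda_j$ (which preserves the ratio $\vol(B)/\vol(\R^d/\Gamma)$), then compare the covolume of $\Gamma'$ with that of the sublattice $\lambda_1\Z e_1+\cdots+\lambda_d\Z e_d$ and inscribe the cross-polytope of volume $2^d/d!$ in $B'$. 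All steps check out; the only cosmetic point is that the extremal vectors naturally give $e_j\in\bar B'$ rather than $e_j\in B'$, but since $\vol(\bar B')=\vol(B')$ (the boundary of a convex body is null) this changes nothing, and you already flagged the closure/approximation issue. You also correctly identify that the reverse inequality $\lambda_1\cdots\lambda_d\,\vol(B)\le 2^d\vol(\R^d/\Gamma)$ is the hard half of Minkowski's theorem and is not needed here, which is all the paper uses in~\eqref{eq:lambda prod} and related estimates.
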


 For a proof of the following,  see~\cite[Proposition~2.1]{BHM}.
\begin{lemma}
\label{lem:latticesm}
Suppose $\Gamma \subseteq \R^{d}$ is a lattice, $B\subseteq \R^{d}$ a symmetric convex body and let $\lambda_1,\ldots,\lambda_d$ denote the successive minima of $\Gamma$ with respect to $B$. Then we have
$$\# \(\Gamma \cap B\) \le \prod_{j=1}^{d}\left(\frac{2j}{\lambda_j}+1\right).$$
\end{lemma}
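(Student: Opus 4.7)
The plan is to argue by induction on the dimension $d$, projecting along the shortest lattice vector. Fix linearly independent $v_1,\ldots,v_d\in\Gamma$ realising the successive minima, so that $v_j\in\lambda_j B$. The base case $d=1$ is immediate: writing $\Gamma=\Z v_1$ with $\|v_1\|_B=\lambda_1$, the integers $n$ with $nv_1\in B$ number at most $2\lfloor 1/\lambda_1\rfloor+1\le 2/\lambda_1+1$, which matches the claimed bound.

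For the inductive step, observe that $v_1$ is primitive in $\Gamma$ (otherwise a proper fraction would be a shorter nonzero lattice vector), so the orthogonal projection $\pi\colon\R^{d}\to v_1^{\perp}$ sends $\Gamma$ onto a rank $d-1$ lattice $\Gamma'$ with $\ker\pi\cap\Gamma=\Z v_1$. Set $B'=\pi(B)$, a symmetric convex body in $v_1^{\perp}$. The projections $\pi(v_2),\ldots,\pi(v_{j+1})$ are linearly independent (any relation could be combined with $v_1$ to contradict independence of $v_1,\ldots,v_{j+1}$) and each $\pi(v_i)$ lies in $\lambda_i B'$, so the successive minima $\mu_1\le\cdots\le\mu_{d-1}$ of $(\Gamma',B')$ satisfy $\mu_j\le\lambda_{j+1}$.

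The key fibre estimate is that, for every $y\in\Gamma'\cap B'$, we have $\#(\pi^{-1}(y)\cap\Gamma\cap B)\le 2/\lambda_1+1$. The fibre in $\Gamma$ is a coset $x_0+\Z v_1$, and if $x_0+\alpha_1 v_1$ and $x_0+\alpha_2 v_1$ both lie in $B$, then by symmetry and convexity $(\alpha_1-\alpha_2)v_1\in B-B=2B$, forcing $|\alpha_1-\alpha_2|\lambda_1\le 2$; hence at most $\lfloor 2/\lambda_1\rfloor+1$ integer translates survive in $B$.

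Combining the fibre bound with the inductive hypothesis applied to $(\Gamma',B')$ gives
$$
\#(\Gamma\cap B)\le\left(\frac{2}{\lambda_1}+1\right)\prod_{j=1}^{d-1}\left(\frac{2j}{\mu_j}+1\right)\le\left(\frac{2}{\lambda_1}+1\right)\prod_{j=1}^{d-1}\left(\frac{2j}{\lambda_{j+1}}+1\right),
$$
and reindexing $k=j+1$ each factor satisfies $2(k-1)/\lambda_k+1\le 2k/\lambda_k+1$, yielding the stated bound $\prod_{j=1}^{d}(2j/\lambda_j+1)$. The main point to check carefully is the primitivity of $v_1$ which makes $\pi(\Gamma)$ a genuine rank $d-1$ lattice; the remaining arguments are the standard symmetry/convexity estimate for the length of a $v_1$-slice and the linear-algebra check that the minima of the projected system are bounded by $\lambda_{j+1}$.
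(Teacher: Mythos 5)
Your set-up is mostly sound: the base case, the primitivity of $v_1$, the fact that $\pi(\Gamma)$ is a rank $d-1$ lattice, and the fibre bound $2/\lambda_1+1$ via $B-B=2B$ are all correct, as is the observation that $\mu_j\le\lambda_{j+1}$. The problem is that in your displayed chain you use this comparison in the wrong direction: to pass from $\prod_{j=1}^{d-1}\bigl(2j/\mu_j+1\bigr)$ to $\prod_{j=1}^{d-1}\bigl(2j/\lambda_{j+1}+1\bigr)$ you need a \emph{lower} bound $\mu_j\ge\lambda_{j+1}$, whereas what you proved (and what is true) is the \emph{upper} bound $\mu_j\le\lambda_{j+1}$. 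Projection genuinely shrinks minima: for $\Gamma$ generated by $(1,0)$ and $(1/2,\,9/10)$ with $B$ the unit disc one has $\lambda_1=1$, $\lambda_2=\sqrt{1.06}>1$, while $\mu_1=9/10<\lambda_2$, so $2/\mu_1+1>2/\lambda_2+1$ and the second inequality of your display already fails. The best lower bound your construction yields is $\mu_j\ge\lambda_{j+1}-\lambda_1/2\ge\lambda_{j+1}/2$ (lift $y\in\Gamma'$ to some $x\in\Gamma\setminus\R v_1$ with gauge at most $\|y\|_{B'}+\lambda_1/2$ and note that such an $x$, together with $v_1$ and lifts of the other short vectors of $\Gamma'$, forces $\lambda_{j+1}\le\mu_j+\lambda_1/2$). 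Even exploiting the slack you invoke at the end (replacing $2(k-1)$ by $2k$), the induction needs $\mu_j\ge\frac{j}{j+1}\lambda_{j+1}$, which is stronger than the factor-$2$ loss you can actually guarantee once $j\ge 2$ and $\lambda_1$ is comparable to $\lambda_{j+1}$. So the argument closes (after inserting the correct lower bound) only for $d=2$; for $d\ge 3$ the inductive step as written is a genuine gap, not a typo.

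For comparison, the paper does not prove this lemma at all: it quotes it from \cite[Proposition~2.1]{BHM} (Betke, Henk and Wills), whose argument is organised differently precisely so as not to lose constants in the way a naive orthogonal projection along $v_1$ does. To repair your route you would need either a sharper comparison between the successive minima of $(\Gamma',B')$ and those of $(\Gamma,B)$ — which, as the example above indicates, is not available in the required direction — or a different inductive decomposition (for instance slicing $\Gamma\cap B$ by the sublattice generated by vectors realising the first $d-1$ minima, as in the cited reference), rather than projecting and comparing minima.
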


\subsection{Reduction to counting points in lattices}
Let $\cA$ denote the set 
$$
\cA=\{ x\in \F_q^{*} :~ j x^2 \in \{1,\ldots, N\} \},
$$
so that 
\begin{equation}
\label{eq:EAI}
\sT_{2,2}(N;j,q)=\sum_{d\in \F_q}(\cA\circ \cA)(d)^2.
\end{equation}
where $(\cA\circ \cA)(d)$ is defined by~\eqref{eq:A-circ}. 

If $a_1,a_2\in \cA$ satisfy
$$a_1-a_2=d,$$
then elementary algebraic manipulations imply
$$
(a_1^2-a_2^2-d^2)^2=4d^2a_2^2.
$$
We have
$$ja_1^2-ja_2^2, ja_2^2\in \{-N,\ldots,N\}.$$
Since for any $\lambda, \mu\in \F_q$ the number of solutions to
$$ja_1^2-ja_2^2=\lambda, \quad  ja_2^2=\mu, \qquad a_1,a_2\in \cA,$$
is $O(1),$ we derive from~\eqref{eq:EAI}
$$
\sT_{2,2}(N;j,q)\ll \sum_{d\in \F_q}J_0(d)^2,
$$
where
$$
J_0(d)=\# \{ |m|,|n|\le N :~ (n-jd^2)^2 \equiv 4jd^2m \bmod{q} \}.
$$
 If $n,m$ satisfy 
$$
(n-jd^2)^2 \equiv 4jd^2m \bmod{q},
$$
then 
$$
n^2+j^2d^4\equiv 2jd^2(2m+n) \bmod{q}.
$$
This implies 
\begin{equation}
\label{eq:EaJ}
\sT_{2,2}(N;j,q)\ll \sum_{d\in \F_q}J(d)^2,
\end{equation}
where
\begin{equation}
\label{eq:J1def}
J(d)=\# \{ |m|,|n|\le 6N :~ n^2+j^2d^4\equiv jd^2m \bmod{q} \}.
\end{equation}
Let $\cL(d)$ denote the lattice 
$$
\cL(d)=\{ (x,y) \in \Z^2 :~ x\equiv jd^2 y \bmod{q} \},
$$
 $B$ the convex body 
$$
B=\{(x,y)\in \R^2 :~ |x|\le 72 N^2,\  |y|\le 12N \},
$$
and let $\lambda_1(d),\lambda_2(d)$ denote the first and second successive minima of $\cL(d)$ with respect to $B$. 

We now partition summation in~\eqref{eq:EaJ} according to the size of $\lambda_1(d)$ and 
$\lambda_2(d)$ to get 
\begin{equation}
\label{eq:S0123}
\sT_{2,2}(N;j,q)\ll S_0+S_1+S_2,
\end{equation}
where 
$$
S_0 =\sum_{\substack{d\in \F_q \\ \lambda_1(d)>1}}J(d)^2,\qquad
S_1  =\sum_{\substack{d\in \F_q \\ \lambda_1(d)\le 1 \\ \lambda_2(d)>1}}J(d)^2,\qquad 
S_2 =\sum_{\substack{d\in \F_q \\ \lambda_1(d),\lambda_2(d)\le 1}}J(d)^2.
$$
\subsection{Concluding the proof}
Consider first $S_0$. If $\lambda_1(d)>1$ then 
$$J(d)\le 1,$$
which follows from the fact that for any distinct points $(n_0,m_0)$, $(n_1.m_1)$ satisfying the conditions in~\eqref{eq:J1def} we have 
$$(n_0^2-n_1^2,m_0-m_1)\in \cL(d)\cap B.$$
This implies that $J(d)^2 = J(d)$ and we derive
\begin{equation}
\label{eq:S0}
S_0=  \sum_{\substack{d\in \F_q \\ \lambda_1(d)>1}}J(d)\ll N^2.
\end{equation} 

Consider next $S_1$. Suppose $d$ satisfies $\lambda_1(d)\le 1$ and $\lambda_2(d)> 1$. There exists $n_d,m_d$ satisfying the conditions given in~\eqref{eq:J1def} such that
$$
J(d)\le \# \left\{ |m|,|n|\le 6N :~ (n^2-n_d^2,m-m_d)\in \cL(d)\cap B\right\} .
$$
Since $\lambda_2(d)>1$, there exists a unique point $(a_d,b_d) \in   \cL(d)\cap B$ satisfying 
$$
\gcd(a_d,b_d)=1, \quad |a_d|\le 72N^2, \quad |b_d|\le 12N,
$$
such that 
$$
J(d)\le \# \left\{ |m|,|n|\le 6N :~ \frac{n^2-n_d^2}{m-m_d}=\frac{a_d}{b_d}\right\}  .
$$ 
This implies 
\begin{equation}
\begin{split} 
\label{eq:S1b1}
S_1&\le \sum_{d\in \F_q}\(\# \left\{ |m|,|n|\le 6N :~ \frac{n^2-n^2_d}{m-m_d}=\frac{a_d}{b_d}\right\} \)^2 \\
&\le \sum_{\substack{|a|\le 72 N^2, \, |b|\le 12 N \\ \gcd(a,b)=1}}K(a,b)^2,
\end{split} 
\end{equation}  
where 
$$
K(a,b)=\# \left\{ |m|,|n|\le 6N :~ \frac{n^2-n_{a,b}^2}{m-m_{a,b}}=\frac{a}{b}\right\},
$$
for some choice of integers $m_{a,b},n_{a,b}$ satisfying $|m_{a,b}|,|n_{a,b}|\le 6 N$. 
Fix some $a,b$ as in the  sum in~\eqref{eq:S1b1} and consider $K(a,b)$. If $n,m$ satisfy 
$$
\frac{n^2-n_{a,b}^2}{m-m_{a,b}}=\frac{a}{b}, \qquad |m|,|n|\le 6 N,
$$
then, since  $\gcd(a,b)=1$, we have 
\begin{equation}
\label{eq:aconds}
n^2- n_{a,b}^2 \equiv 0 \bmod{|a|},
\end{equation}
and 
\begin{equation}
\label{eq:bconds}
m-m_{a,b}\equiv 0 \bmod{|b|}.
\end{equation}

Furthermore, if one out of $m$ or $n$ is fixed then the the other 
number is   defined in no more than two ways.

Write~\eqref{eq:aconds} as 
$$
(n-n_{a,b})(n+n_{a,b})\equiv 0 \bmod{|a|}.
$$
Then we see that there are two integers $a_1,a_2$ satisfying 
$$
a_1a_2=a, \qquad |a_1|,|a_2|\le 12 N,
$$  
such that 
$$
n\equiv n_{a,b} \bmod{|a_1|}, \quad n\equiv -n_{a,b} \bmod{|a_2|}.
$$
Hence for each fixed pair $(a_1, a_2)$ there are at most 
$$
\frac{N}{\lcm[a_1,a_2]}+1 \ll \frac{N}{|a|} \gcd(a_1,a_2).
$$
possibilities for $n$.  Hence 
$$
K(a,b) 
 \ll \sum_{a_1a_2=a}\frac{N}{\text{lcm}(a_1,a_2)}\ll \frac{N}{|a|}\sum_{a_1a_2=a}\gcd(a_1,a_2).
$$
By the Cauchy-Schwarz inequality and a well-known bound on  the divisor function, see~\cite[Equation~(1.81)]{IwKow}, 
we now derive 
\begin{equation}
\label{eq:K_a-bound}
K(a,b)^2\ll N^{2+o(1)}\sum_{a_1a_2=a}\frac{\gcd(a_1,a_2)^2}{|a|^2}.
\end{equation}

Similarly, using~\eqref{eq:bconds} we obtain 
\begin{equation}
\label{eq:K_b-bound}
K(a,b)\ll \frac{N}{|b|}.
\end{equation}

Combining~\eqref{eq:K_a-bound} and~\eqref{eq:K_b-bound}  and substituting into~\eqref{eq:S1b1}, we see that 
\begin{align*}
S_1&\le N^{2+o(1)}\sum_{|a|\le 72 N^2,\, |b|\le 12 N}\sum_{\substack{a_1a_2=a \\ |a_1|,|a_2|\le 12 N}}\min\left\{\frac{1}{b^2},\frac{\gcd(a_1,a_2)^2}{a^2} \right\} \\ 
& \le N^{2+o(1)}\sum_{ a_1,a_2, b \le 12 N}\min\left\{\frac{1}{b^2},\frac{\gcd(a_1,a_2)^2}{a^2_1a^2_2} \right\} \\ 
&\le N^{2+o(1)}\sum_{e \le 12N}\sum_{ b\le 12N}\sum_{\substack{a_1,a_2\le 12N \\ \gcd(a_1,a_2)=e}}
\min\left\{\frac{1}{b^2},\frac{e^2}{a^2_1a^2_2} \right\} \\
& \le N^{2+o(1)}\sum_{e \le 12N}\sum_{ b\le 12N}\sum_{a_1,a_2\le 12N/e}\min\left\{\frac{1}{b^2},\frac{1}{a^2_1a^2_2e^2} \right\} 
\end{align*} 

Using  the bound on  the divisor function again we obtain
\begin{equation}
\begin{split} 
\label{eq:S1}
S_1 & \le N^{2+o(1)}\sum_{ b\le 12N} \sum_{a\le 12^4 N^2}\min\left\{\frac{1}{b^2},\frac{1}{a^2} \right\}\\
& \le N^{2+o(1)}\(\sum_{ b\le 12N} \sum_{\substack{ a\le b }}\frac{1}{b^2}+
\sum_{a\le 12^4 N^2}\sum_{\substack{b\le a }}\frac{1}{a^2}\) \le N^{2+o(1)}. 
\end{split}
\end{equation}

Finally consider $S_2$. If $d$ satisfies $\lambda_2(d)\le 1$ then by Lemma~\ref{lem:mst} and Lemma~\ref{lem:latticesm}
\begin{equation}
\label{eq:LC1}
\# \(\cL(d)\cap B\)\ll \frac{N^3}{q}.
\end{equation}
For each $|n|\le 6N$ there exists at most one value of $m$ satisfying~\eqref{eq:J1def} and for any two pairs $(n_1,m_1), (n_2,m_2)$ satisfying~\eqref{eq:J1def} we have 
$$
n_1^2-n_2^2\equiv 2jd^2(m_1-m_2) \bmod q.
$$
This implies
$$
J(d)^2\ll \#\{ |n_1|,|n_2|, |m|\le 6N, \ n_1\neq \pm n_2:~n_1^2-n_2^2\equiv 2jd^2m \bmod{q}\}.
$$
Since for any  integer $r\neq 0$ the bound on the divisor function implies
$$
\# \{ |n_1|,|n_2|\le 8N :~ n_1^2-n_2^2=r\}\le N^{o(1)},
$$
we obtain
$$
J(d)^2\le \# \(\cL(d)\cap B\) N^{o(1)}.
$$
By~\eqref{eq:LC1}
$$
J(d)\ll \frac{N^{3/2+o(1)}}{q^{1/2}},
$$
which implies 
\begin{equation}
\begin{split} 
\label{eq:S2}
S_2=\sum_{\substack{d\in \F_q \\ \lambda_1(d),\lambda_2(d)\le 1}}J(d)^2\ll \frac{N^{3/2}}{q^{1/2}}\sum_{\substack{d\in \F_q \\ \lambda_1(d),\lambda_2(d)\le 1}}J(d)\ll \frac{N^{7/2+o(1)}}{q^{1/2}}.
\end{split}
\end{equation}
Combining~\eqref{eq:S0}, \eqref{eq:S1} and~\eqref{eq:S2} with~\eqref{eq:S0123}, we derive the 
desired bound on $\sT_{2,2}(N;j,q)$. 

\section{Proof of Theorem~\ref{thm:T42-N}}

\subsection{Lattices}

For a lattice $\Gamma$ and  a convex body $B$  we define the dual  lattice $\Gamma^*$ and dual body $B^*$ by
$$
\Gamma^*=\{ x\in \R^{d} :~\langle x,y \rangle \in \Z \quad \text{for all} \quad y\in \Gamma\},
$$
and 
$$
B^{*}=\{ x\in \R^{d} :~\langle x,y \rangle \le 1 \quad \text{for all} \quad y\in B\},
$$
respectively. 

The following is known as a transference theorem and is due to Mahler~\cite{Mah} which we present in 
a  form given by Cassels~\cite[Chapter~VIII, Theorem~VI]{Cass}. 
\begin{lemma}
\label{lem:transfer}
Let $\Gamma\subseteq \R^{d}$ be a lattice, $B\subseteq \R^{d}$ a symmetric convex body and let $\Gamma^{*}$ and $B^{*}$ denote the dual lattice and dual body. Let $\lambda_1,\ldots,\lambda_d$ denote the  successive minima of $\Gamma$ with respect to $B$ and $\lambda_1^{*},\ldots,\lambda_d^{*}$ the successive minima of $\Gamma^{*}$ with respect to $B^{*}$. For each $1\le j \le d$ we have
$$ \lambda_j \lambda^*_{d-j+1} \le d!.$$
\end{lemma}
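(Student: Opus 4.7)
The plan is to split the argument into two independent pieces: an elementary lower bound $\lambda_j\lambda^*_{d-j+1}\ge 1$ that follows from the integrality of inner products between primal and dual lattice vectors, and an upper bound on the full product $\prod_{i=1}^d \lambda_i\lambda_i^*$ obtained by applying Minkowski's second theorem to both $(\Gamma,B)$ and $(\Gamma^*,B^*)$ simultaneously. Pairing a single distinguished factor against the $d-1$ remaining factors (each of which is at least $1$ by the lower bound) then isolates $\lambda_j\lambda^*_{d-j+1}$ and yields the stated estimate.

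For the lower bound I would fix $j$ linearly independent vectors $x_1,\dots,x_j\in\Gamma\cap \lambda_j B$ and $d-j+1$ linearly independent vectors $y_1,\dots,y_{d-j+1}\in\Gamma^*\cap \lambda^*_{d-j+1}B^*$, which exist by definition of the successive minima. A dimension count is the engine: the $(d-j+1)$-dimensional subspace $\mathrm{span}(y_1,\dots,y_{d-j+1})$ cannot fit inside the $(d-j)$-dimensional orthogonal complement of $\mathrm{span}(x_1,\dots,x_j)$, so some pair satisfies $\langle x_i,y_k\rangle\ne 0$. Since $x_i\in\Gamma$ and $y_k\in\Gamma^*$, the inner product is an integer, while the inclusions $x_i/\lambda_j\in B$ and $y_k/\lambda^*_{d-j+1}\in B^*$ combined with the definition of $B^*$ (and the central symmetry of $B$) give $|\langle x_i,y_k\rangle|\le \lambda_j\lambda^*_{d-j+1}$, whence $\lambda_j\lambda^*_{d-j+1}\ge 1$.

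For the upper bound I would invoke the complementary upper half of Minkowski's second theorem (the lower half being Lemma~\ref{lem:mst}), which states that for any lattice $\Lambda$ and symmetric convex body $K$ with successive minima $\mu_1,\dots,\mu_d$ one has $\mu_1\cdots\mu_d\,\vol(K)\le 2^d\vol(\R^d/\Lambda)$. Applied to $(\Gamma,B)$ and $(\Gamma^*,B^*)$ and multiplied together, this gives
\[
\prod_{i=1}^d \lambda_i\lambda_i^*\cdot \vol(B)\vol(B^*)\le 4^d\,\vol(\R^d/\Gamma)\vol(\R^d/\Gamma^*).
\]
The duality identity $\vol(\R^d/\Gamma)\vol(\R^d/\Gamma^*)=1$ together with Mahler's volume inequality $\vol(B)\vol(B^*)\ge 4^d/d!$ for symmetric convex bodies then collapses this to $\prod_{i=1}^d \lambda_i\lambda_i^*\le d!$. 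Reindexing the product as $\prod_{i=1}^d \lambda_i\lambda^*_{d-i+1}$ and bounding the $d-1$ factors other than the $j$-th by $1$ from below leaves $\lambda_j\lambda^*_{d-j+1}\le d!$.

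The main technical input — and the only step that is not routine linear algebra or standard geometry of numbers — is Mahler's lower bound $\vol(B)\vol(B^*)\ge 4^d/d!$ for symmetric convex bodies; this is itself a classical theorem whose proof is nontrivial. Cassels's own argument in~\cite{Cass} sidesteps this inequality by working directly with compound convex bodies and polar reciprocation, yielding the same bound along a more elaborate geometric path; either route produces Lemma~\ref{lem:transfer}.
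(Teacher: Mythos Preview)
The paper does not give a proof; it simply cites Cassels~\cite[Chapter~VIII, Theorem~VI]{Cass}. Your lower bound $\lambda_j\lambda^*_{d-j+1}\ge 1$ via the dimension count and integrality of $\langle x_i,y_k\rangle$ is correct and is the standard argument. The problem is the upper bound: what you call ``Mahler's volume inequality'' $\vol(B)\vol(B^*)\ge 4^d/d!$ is the Mahler \emph{conjecture}, still open in every dimension $d\ge 4$ (the case $d=3$ was settled only in 2020 by Iriyeh and Shibata, and is by no means a ``classical theorem''). Mahler's 1939 result --- the one actually proved in~\cite{Mah} and reproduced in Cassels --- is the weaker bound $\vol(B)\vol(B^*)\ge 4^d/(d!)^2$. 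Substituting this into your scheme gives only $\lambda_j\lambda^*_{d-j+1}\le (d!)^2$, not $d!$.

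For the paper's purposes this is immaterial: Lemma~\ref{lem:transfer} is applied only with $d=3$ inside the proof of Lemma~\ref{lem:shortpoint3}, and there any absolute constant is absorbed. But as a proof of the lemma exactly as stated for general $d$, your argument rests on an open problem. If you want an unconditional result by your method, replace $d!$ by $(d!)^2$; if you insist on the constant $d!$, you need a different route and should verify directly what constant Cassels actually obtains.
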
 

We apply Lemma~\ref{lem:transfer} to lattices of a specific type whose dual may be easily calculated. For a proof of the following, see~\cite[Lemma~15]{BK}.

\begin{lemma}
\label{lem:dualcalc}
Let $a_1,\ldots,a_d$ and $q\ge 1$ be integers satisfying $\gcd(a_i,q)=1$ and let $\cL$ denote the lattice 
$$\cL=\{ (n_1,\ldots,n_d)\in \Z^{d}:~ a_1n_1+\ldots+a_dn_d\equiv 0 \bmod{q}\}.$$
Then we have 
\begin{align*}
\cL^{*}=\biggl\{ \left(\frac{m_1}{q},\ldots,\frac{m_d}{q}\right)&  \in \Z^{d}/q  : \\
&~ \exists \  \lambda \in \Z \ \ \text{such that} \ \  a_j\lambda \equiv m_j \bmod{q}  \biggr\}.
\end{align*}
\end{lemma}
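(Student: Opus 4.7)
\section*{Proof plan for Lemma~\ref{lem:dualcalc}}

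The plan is to verify the two inclusions defining the dual lattice directly, using the hypothesis $\gcd(a_i,q)=1$ only in the reverse inclusion to produce the parameter $\lambda$.

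For the forward inclusion, suppose $x=(m_1/q,\ldots,m_d/q)$ with $m_j\equiv a_j\lambda\pmod q$ for some $\lambda\in\Z$. For any $y=(n_1,\ldots,n_d)\in\cL$ I would compute
$$
\langle x,y\rangle=\frac{1}{q}\sum_{j=1}^d m_j n_j\equiv\frac{\lambda}{q}\sum_{j=1}^d a_j n_j\equiv 0\pmod 1,
$$
using the defining congruence $\sum_j a_jn_j\equiv 0\pmod q$ of $\cL$. Hence $\langle x,y\rangle\in\Z$ and $x\in\cL^{*}$.

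For the reverse inclusion, take any $x=(x_1,\ldots,x_d)\in\cL^{*}$. First I would extract the denominators: since $qe_j\in\cL$ (trivially), pairing against $x$ gives $qx_j\in\Z$, so $x_j=m_j/q$ for some integer $m_j$. To obtain the compatibility condition, the idea is to test $x$ against the two-coordinate vectors $v_{ij}:=a_j e_i - a_i e_j$, which lie in $\cL$ because $a_i a_j + a_j(-a_i)=0$. The relation $\langle x,v_{ij}\rangle\in\Z$ then reads
$$
\frac{m_i a_j - m_j a_i}{q}\in\Z,\qquad\text{i.e.}\qquad m_i a_j\equiv m_j a_i\pmod q.
$$
Finally, invoking $\gcd(a_1,q)=1$, I would set $\lambda\equiv a_1^{-1}m_1\pmod q$, so that $m_1\equiv a_1\lambda\pmod q$; the congruences above with $i=1$ then yield $a_1 m_j\equiv a_1 a_j\lambda\pmod q$, and cancelling $a_1$ gives $m_j\equiv a_j\lambda\pmod q$ for every $j$, as required.

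The only mild subtlety, and hence the most delicate point of the plan, is the reverse inclusion: one must be careful to choose lattice elements in $\cL$ that are rich enough to force the full compatibility (namely the $v_{ij}$ rather than just the $qe_j$). Once those test vectors are in hand the argument is purely mechanical, and the role of the coprimality hypothesis $\gcd(a_j,q)=1$ appears precisely at the step of inverting $a_1$ modulo $q$ to define $\lambda$.
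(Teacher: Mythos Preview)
Your proof is correct and complete. The paper itself does not supply a proof of this lemma but instead refers to \cite[Lemma~15]{BK}; your direct verification of both inclusions---using the test vectors $qe_j\in\cL$ to pin down the denominators and the vectors $v_{ij}=a_je_i-a_ie_j\in\cL$ to extract the compatibility congruences $m_ia_j\equiv m_ja_i\pmod q$, then inverting $a_1$ modulo $q$ to produce $\lambda$---is the standard elementary argument and matches what one finds in the cited source.
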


Our next result should be compared with the case $\nu=3$ of~\cite[Lemma~17]{BGKS}. It is possible to give a more direct variant of~\cite[Lemma~17]{BGKS} to estimate higher order energies of modular square roots (see the proof of Corollary~\ref{cor:4vbD} below) although this seems to put tighter restrictions on the size of the parameter $N$.

\begin{lemma}
\label{lem:shortpoint3}
Let $q$ be prime, $a,b,c\not\equiv 0 \bmod{q}$ and $L,M,N$ integers. Let $\cL$ denote the lattice 
$$
\cL=\{(\ell ,m, n) \in \Z^3   :~ a\ell +bm+c n\equiv 0 \bmod{q}\},
$$
and let $B$ be the convex body
$$
B=\{ (x,y,z)\in \R^3   :~ |x|\le  N, \ \ |y|\le M, \ \ |z|\le L \}.
$$
Let 
$$
K = \# \(\cL\cap B\),
$$
and $\lambda_1,\lambda_2$ denote the first and second successive minima of $\cL$ with respect to $B$. Then 
at least one of the following holds: 
\begin{itemize}
\item[(i)] 
$$
K <\max\left\{\frac{640LMN}{q},\,1\right\}.
$$
\item[(ii)] $\lambda_1\le 1$ and $\lambda_2>1$.  

\item[(iii)] There exists some $\lambda \not \equiv 0\bmod{q}$ and $\ell,m,n\in \Z$ satisfying 
$$
|\ell|\le \frac{4320MN}{K}, \quad  |m| \le \frac{4320LN}{K}, \quad  |n|\le \frac{4320LM}{K}
$$
and
$$
a\lambda\equiv \ell  \bmod{q}, \quad b\lambda\equiv m \bmod{q}, \quad c\lambda \equiv n \bmod{q}.
$$
\end{itemize}
\end{lemma}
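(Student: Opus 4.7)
The plan is to argue by case analysis on the sizes of the successive minima $\lambda_1 \le \lambda_2 \le \lambda_3$ of $\cL$ with respect to $B$, combining all four lemmas on lattices (\ref{lem:mst}, \ref{lem:latticesm}, \ref{lem:transfer}, \ref{lem:dualcalc}). As a preliminary observation, $\vol(B) = 8LMN$ and $\vol(\R^3/\cL) = q$ (since at least one of $a,b,c$ is nonzero $\bmod q$, so $\cL$ has index $q$ in $\Z^3$), whence Minkowski's second theorem yields the baseline $\lambda_1\lambda_2\lambda_3 \ge q/(6LMN)$.

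If $\lambda_1 > 1$ then $\cL \cap B = \{0\}$ and $K = 1$, giving (i) at once; if $\lambda_1 \le 1 < \lambda_2$ we are in case (ii). Otherwise $\lambda_2 \le 1$, and I would apply Lemma~\ref{lem:latticesm} together with the elementary inequality $2j/\lambda_j + 1 \le (2j+1)/\lambda_j$, valid whenever $\lambda_j \le 1$. If in addition $\lambda_3 \le 1$ this yields $K \le 105/(\lambda_1\lambda_2\lambda_3) \le 630\,LMN/q$, which gives (i). If instead $\lambda_3 > 1$ the last factor $6/\lambda_3+1$ is at most $7$, so $K \le 105/(\lambda_1\lambda_2)$, and combining with Minkowski produces the crucial lower bound $\lambda_3 \ge qK/(630 LMN)$.

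The last step is to pass to the dual lattice. Mahler's transference (Lemma~\ref{lem:transfer}) gives $\lambda_1^* \lambda_3 \le 3! = 6$, so $\lambda_1^* \le 3780\,LMN/(qK)$, yielding a nonzero $v \in \cL^* \cap \lambda_1^* B^*$. Since $B^* = \{(x,y,z) : N|x| + M|y| + L|z| \le 1\}$, membership in $\lambda_1^* B^*$ produces individual coordinate bounds $|v_i| \le \lambda_1^*/(\text{corresponding side of } B)$. By Lemma~\ref{lem:dualcalc}, $v = (m_1,m_2,m_3)/q$ for some $\lambda \in \Z$ with $a\lambda \equiv m_1$, $b\lambda \equiv m_2$, $c\lambda \equiv m_3 \bmod q$, and these bounds on $|m_i|$ are precisely the inequalities asserted in (iii), up to absolute constants. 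Finally $\lambda \not\equiv 0 \bmod q$, since otherwise each $m_i$ would be a multiple of $q$ while being strictly smaller than $q$ in absolute value, forcing $v=0$ and contradicting the choice of $v$.

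The main obstacle is purely bookkeeping: verifying the successive-minima case split is exhaustive with the correct strict/weak inequalities, and tracking the constants through the chain Lemma~\ref{lem:mst} $\to$ Lemma~\ref{lem:latticesm} $\to$ Lemma~\ref{lem:transfer} so they fit inside the stated $640$ and $4320$. The other mildly delicate point is verifying $|m_i| < q$ in the conclusion so that the nonvanishing of $v$ really does force $\lambda \not\equiv 0 \bmod q$; this follows from the failure of (i), namely $K \gg LMN/q$, and needs a little care in the boundary regime when $LMN$ is close to $q$.
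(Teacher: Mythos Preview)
Your approach is essentially identical to the paper's: both arguments case-split on the successive minima, dispatch the cases $\lambda_1>1$ and $\lambda_3\le 1$ to (i) via Lemmas~\ref{lem:mst} and~\ref{lem:latticesm}, observe that $\lambda_1\le 1<\lambda_2$ is exactly (ii), and in the remaining case $\lambda_2\le 1<\lambda_3$ pass to the dual via Lemmas~\ref{lem:transfer} and~\ref{lem:dualcalc} to produce the short vector of (iii). Your constant tracking is in fact slightly sharper (your chain gives $105\cdot 6\cdot 6=3780$ in place of the paper's $4320$), and the boundary worry you flag about $\lambda\not\equiv 0$ when $LMN$ is small relative to $q$ is precisely the same soft spot glossed over in the paper's proof.
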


\begin{proof}
Assume that~(i) fails. Thus we have
\begin{equation}
\label{eq:LuB1}
K \ge \max\left\{ \frac{640 LMN}{q},\,1\right\}.
\end{equation}
Then $K \ge 1$. Hence, if  $\lambda_1\le  \lambda_2\le \lambda_3$ denote the successive minima of $\cL$ with respect to $B$,  then  $\lambda_1\le 1$. We first show~\eqref{eq:LuB1} 
implies
$$
\lambda_3>1.
$$
Indeed, otherwise by Lemma~\ref{lem:latticesm}
\begin{equation}
\label{eq:LuB2}
K \le \(\frac{2}{\lambda_1} + 1\) \(\frac{4}{\lambda_2} + 1\)   \(\frac{6}{\lambda_3} + 1\)
 \le \frac{3}{\lambda_1}   \frac{5}{\lambda_2}  \frac{7}{\lambda_3}  =  \frac{105}{\lambda_1 \lambda_2 \lambda_3} .
\end{equation}
Since 
$$\vol(\R^3/\cL)=q \mand \vol(B) = 8LMN,
$$ 
we see from Lemma~\ref{lem:mst} that 
\begin{equation}
\label{eq:lambda prod}
 \frac{1}{\lambda_1 \lambda_2 \lambda_3} \le  \frac{3!}{8}  \frac{8LMM}{q}=  \frac{6LMN}{q},
\end{equation}
 which together with~\eqref{eq:LuB2} contradicts~\eqref{eq:LuB1}. 

 Hence we have either 
\begin{equation}
\label{eq:L3case1}
\lambda_1\le 1, \qquad \lambda_2, \lambda_3>1,
\end{equation}
or
\begin{equation}
\label{eq:L3case2}
\lambda_1, \lambda_2\le 1, \qquad \lambda_3>1.
\end{equation}

Clearly~\eqref{eq:L3case1} is the same as~(ii).

Next suppose that we have~\eqref{eq:L3case2}. By  Lemma~\ref{lem:latticesm}, a similar calculation as before, together 
with~\eqref{eq:lambda prod} gives,
\begin{equation}
\label{eq:Kcase2}
K\le  6 \frac{15}{\lambda_1 \lambda_2}=\frac{90\lambda_3}{\lambda_1\lambda_2\lambda_3}.
\end{equation}
Applying Lemma~\ref{lem:mst} and using 
$$
\vol (B)=8NML, \quad \vol(\R^{3}/\cL)=q,
$$
we derive from~\eqref{eq:Kcase2} that
$$
K\le \frac{90 \cdot 3! \, \vol (B) \lambda_3}{2^3\, \vol(\R^{3}/\cL)}=  \frac{720 NML \lambda_3 }{q}.
$$ 
Let $\lambda_1^{*}$ denote the first successive minima of the dual lattice $\cL^{*}$ with respect to the dual body $B^{*}$. By Lemma~\ref{lem:transfer}
$$
\lambda_3\le \frac{6}{\lambda_1^{*}}.
$$
The above estimates combined with~\eqref{eq:Kcase2} implies
$$
\lambda_1^{*} \le \frac{4320 NML}{qK}.
$$
Hence, by  the definition of $\lambda_1^{*}$
\begin{equation}
\label{eq:LstarB}
\cL^{*}\cap \frac{4320NML}{qK}B^{*}\neq \{(0,0,0)\}.
\end{equation}
Its remains to recall that by Lemma~\ref{lem:dualcalc}
\begin{align*}
  \cL^{*}& =\biggl\{ \left(\frac{\ell}{q},\frac{m}{q},\frac{n}{q}\right) \in \Z^{3}/q :~ \exists \  \lambda \in \Z  \text{ such that }\\
&\qquad \qquad  \qquad  a\lambda \equiv \ell \bmod{q}, \ b\lambda\equiv m \bmod{q}, \ c\lambda \equiv n \bmod{q}\biggr  \},
\end{align*}
and  also it is obvious that 
$$
B^{*}=\{ (x,y,z)\in \R^3  :~ L|x|+M|y|+ N|z|\le 1\}.
$$
By~\eqref{eq:LstarB}, this implies there exists some $\lambda \not \equiv 0\bmod{q}$ and $\ell ,m, n$ satisfying~(iii), 
which completes the proof.
\end{proof}

\begin{corollary}
\label{cor:4vbD} Let $\varepsilon>0$ be a fixed real number.
For $j\in \Fq^{*}$ and integer $N\ll p,$ let $\cA,\cD\subseteq \Fq$ denote the sets
$$
\cA=\{ x\in \F_q^{*}   :~ jx^2 \in [1,N] \}.
$$
and 
$$
\cD=\{ d\in \Fq^{*}   :~ (\cA\circ \cA)(d)\ge \Delta\}.
$$
Let $K$ be sufficiently large and suppose $K$ and $\Delta$ satisfy 
\begin{equation}
\label{eq:D3Kcond}
K\ge  \(\frac{N^{6}}{\Delta^{10}q^{1/2}}+\frac{N^{15/2}}{\Delta^{12}q^{1/2}}+\frac{N^{10}}{\Delta^{16}q^{1/2}}\) N^{\varepsilon}
\end{equation} 
and 
\begin{equation}
\label{eq:Deltaassump}
\Delta \ge \(\frac{N^{3/2}}{q^{1/2}}+\frac{N^{5/8}}{q^{1/8}}\) N^{\varepsilon}.
\end{equation}
Let $\cF\subseteq \F_q^{*}$ denote the set of $f$ satisfying 
\begin{equation}
\label{eq:DKb}
(\cD\circ \cD)(f)\ge K.
\end{equation}
Then  either
\begin{equation}
\label{eq:Kcond1-1}
K\ll 1,
\end{equation}
or 
$$
K\#\cF\ll \frac{N^{3+o(1)}}{\Delta^4}.
$$
\end{corollary}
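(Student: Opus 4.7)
The plan is to combine the lattice encoding of $\cD$ implicit in the proof of Theorem~\ref{thm:T22-N} with the transference argument of Lemma~\ref{lem:shortpoint3}. The lower bound on $\Delta$ in~\eqref{eq:Deltaassump} is calibrated to exclude, for every $d\in\cD$, the two degenerate regimes encountered in the proof of Theorem~\ref{thm:T22-N}: the case $\lambda_1(d)>1$ would force $J(d)\le 1$, while the case $\lambda_1(d),\lambda_2(d)\le 1$ would force $J(d)\ll N^{3/2+o(1)}/q^{1/2}$, both incompatible with $(\cA\circ\cA)(d)\ge \Delta$. Hence every $d\in\cD$ sits in the middle regime $\lambda_1(d)\le 1<\lambda_2(d)$ of the two-dimensional lattice $\cL(d)$ and so carries a primitive short vector $v_d=(a_d,b_d)\in\cL(d)$ with
\[
a_d\equiv jd^2 b_d\pmod q,\qquad |a_d|\ll N^{2+o(1)}/\Delta^2,\qquad |b_d|\ll N/\Delta,
\]
the two size bounds coming from the divisor analysis of $K(a_d,b_d)$ in the proof of Theorem~\ref{thm:T22-N}. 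The map $d\mapsto v_d$ is essentially two-to-one.

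\textbf{Key identity and three-dimensional lattice.} For any pair $(d_1,d_2)\in\cD^2$ with $d_1-d_2=f$, subtracting the congruences $a_{d_i}\equiv jd_i^2 b_{d_i}$ after cross-multiplying by $b_{d_{3-i}}$ yields the identity
\[
a_{d_1}b_{d_2}-a_{d_2}b_{d_1}\equiv jf(d_1+d_2)\,b_{d_1}b_{d_2}\pmod q.
\]
Thus for each $f\in\cF$ and each of the (at least) $K$ associated pairs, the integer triple $(a_{d_1}b_{d_2},\,a_{d_2}b_{d_1},\,b_{d_1}b_{d_2})\in\Z^3$ lies in a box of sides comparable to $N^{3+o(1)}/\Delta^3$, $N^{3+o(1)}/\Delta^3$, $N^{2+o(1)}/\Delta^2$ and satisfies a linear congruence modulo $q$ whose coefficients depend on $f$ and $s=d_1+d_2$. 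After dyadic pigeonholing on $f$, on $s$, and on the three box scales, apply Lemma~\ref{lem:shortpoint3} to the induced 3D lattice. The three numerical hypotheses on $K$ in~\eqref{eq:D3Kcond} are arranged so that three distinct choices of box parameters in Lemma~\ref{lem:shortpoint3} together rule out alternatives~(ii) and~(iii) of the trichotomy, leaving only alternative~(i): either $K\ll 1$ (the first outcome of the corollary), or $K\ll LMN/q\cdot N^{o(1)}$, which after multiplying through by $\#\cF$ and summing over the pigeonhole scales yields the desired bound $K\#\cF\ll N^{3+o(1)}/\Delta^4$.

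\textbf{Main obstacle.} The delicate step is matching the three numerical conditions on $K$ with three specific parameter choices in Lemma~\ref{lem:shortpoint3}, so that the three terms $N^6/(\Delta^{10}q^{1/2})$, $N^{15/2}/(\Delta^{12}q^{1/2})$ and $N^{10}/(\Delta^{16}q^{1/2})$ emerge naturally from its trichotomy; this is where the square-root saving in $q$ must be extracted by combining the 3D lattice bound with the 2D structure already used for $v_d$. A secondary challenge is threading the argument through the $s$-dependence of the coefficients $jf(d_1+d_2)$ of the linear congruence without losing more than $N^{o(1)}$; the divisor-type losses inherited from the proof of Theorem~\ref{thm:T22-N} are comparatively mild and are absorbed cleanly into the same $N^{o(1)}$ factor.
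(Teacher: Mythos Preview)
Your proposal has a genuine structural gap. The linear identity you derive,
\[
a_{d_1}b_{d_2}-a_{d_2}b_{d_1}\equiv jf(d_1+d_2)\,b_{d_1}b_{d_2}\pmod q,
\]
has a coefficient $jfs$ with $s=d_1+d_2$ that varies over the $K$ pairs associated to a fixed $f$. Hence there is no single three-dimensional lattice into which all these triples fall, and ``pigeonholing on $s$'' cannot be done dyadically: $s$ ranges over $\F_q$, so you would lose a factor comparable to $q$, which destroys the bound. The paper avoids this by squaring first: from $d_1-d_2=f$ one has the exact identity $(d_1^2-d_2^2-f^2)^2=4f^2 d_2^2$, and after substituting $2jd_i^2\equiv n_{d_i}/m_{d_i}$ (from \cite[Lemma~6.4]{DKSZ}, which gives the sharper sizes $|m_d|\ll N/\Delta^2$, $|n_d|\ll N^2/\Delta^2$ rather than your $|b_d|\ll N/\Delta$) one obtains a congruence whose coefficients involve only $jf^2$ and $j^2f^4$. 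This is what produces a genuine three-dimensional lattice depending on $f$ alone.

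You also have the logic of Lemma~\ref{lem:shortpoint3} inverted. In the paper, alternative~(ii) (namely $\lambda_1\le 1<\lambda_2$) is what yields $K\ll 1$, and alternative~(iii) is \emph{used}, not ruled out: the short dual vector it supplies satisfies $jf^2 n\equiv m$, $j^2f^4 n\equiv \ell\pmod q$, and the three terms in~\eqref{eq:D3Kcond} are exactly what force the resulting congruences (and the relation $m^2\equiv n\ell$) to become exact integer equations. One then solves the quadratic in $n_{d_1}/m_{d_1},\,n_{d_2}/m_{d_2}$, observes that the discriminant $n_{d_1}m_{d_1}n_{d_2}m_{d_2}$ must be a perfect square, and a divisor bound gives $K\#\cF\ll N^{3+o(1)}/\Delta^4$ after summing over $f\in\cF$ (the map $f\mapsto(\ell,m,n)$ with $\gcd(\ell,m,n)=1$ being $O(1)$-to-one). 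Your sketch does not arrive at any such equation, and without eliminating $s$ it cannot.
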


\begin{proof}
From~\eqref{eq:DKb} 
\begin{equation}
\label{eq:DDK1}
K\le \# \{ (d_1,d_2)\in \cD :~d_1-d_2=f\}.
\end{equation}
If $d_1,d_2\in \cD$ satisfy $d_1-d_2=f$, then 
$$
d_1^2-d_2^2 - f^2 = (d_1-d_2)^2 + 2d_1d_2 - 2d_2^2 - f^2  
= 2d_2 (d_1-d_2) = 2d_2 f
$$ 
and 
some algebraic manipulations show
$$
(2jd_1^2-2jd_2^2-2jf^2)^2=8j f^2(2j d_2^2).
$$
Since $0\not \in \cD$, for each $d\in \cD$, by~\eqref{eq:Deltaassump} and~\cite[Lemma~6.4]{DKSZ} there exists $m_d,n_d$ satisfying 
\begin{equation}
\label{eq:Lemma64DKSZ}
\begin{split} 
&2jd^2\equiv  m_d^{-1}n_d \bmod{q}, \qquad |n_d|\ll \frac{N^2}{\Delta^2}, \\
&  |m_d|\ll \frac{N}{\Delta^2}, \qquad \gcd(m_d,n_d)=1.
\end{split} 
\end{equation}
Let $I(f)$ count the number of solutions to the congruence 
\begin{equation}
\label{eq:reDnm}
\left(n_{d_1}m_{d_1}^{-1}-n_{d_2}m_{d_2}^{-1}-2jf^2\right)^2\equiv 8jf^2n_{d_2}m_{d_2}^{-1} \bmod{q},
\end{equation}
with $d_1,d_2\in \cD$.
The above and~\eqref{eq:DDK1} imply
\begin{equation}
\label{eq:KIf}
K\le I(f).
\end{equation}
Rearranging~\eqref{eq:reDnm} we obtain 
$$
\left(m_{d_2}n_{d_1}-m_{d_1}n_{d_2} -2jf^2m_{d_1}m_{d_2}\right)^2\equiv 8jf^2m_{d_1}^2m_{d_2}n_{d_2} \bmod{q}.
$$
This implies that $I(f)$ is bounded by the number of solutions to
\begin{equation}
\begin{split} 
\label{eq:If123}
(n_{d_1}m_{d_2}-n_{d_2}m_{d_1})^2 & -4jf^2m_{d_1}m_{d_2}(n_{d_1}m_{d_2}+n_{d_2}m_{d_1})\\
& \qquad \qquad  +4j^2f^4(m_{d_1}m_{d_2})^2\equiv 0 \bmod{q},
\end{split} 
\end{equation}   
with $d_1,d_2\in \cD$. Let $\cL$ denote the lattice 
$$
\cL=\{ (m,n,\ell)\in \Z^{3}:~ m+njf^2+\ell j^2f^4\equiv 0 \bmod{q}\},
$$
and $B$ the convex body 
$$
B=\left\{ (x,y,z)\in \R^3:~ |x|\le \frac{CN^6}{\Delta^8}, \  |y|\le \frac{CN^5}{\Delta^8}, \  |z|\le\frac{CN^4}{\Delta^8}\right\}.
$$  
for a suitable absolute constant $C$. By~\eqref{eq:Lemma64DKSZ} and~\eqref{eq:If123}
\begin{equation}
\begin{split}
\label{eq:d123456789}
\bigl((n_{d_1}m_{d_2}-n_{d_2}m_{d_1})^2, -4m_{d_1}m_{d_2}(n_{d_1}m_{d_2}+n_{d_2}m_{d_1}),&\\
  4(m_{d_1}m_{d_2})^2\bigr)&\in \cL\cap B.
\end{split} 
\end{equation}
Let $\lambda_1,\lambda_2$ denote the first and second successive minima of $\cL$ with respect to $B$. Assuming that $K\ge 1$ we have $\lambda_1\le 1$. 

Suppose that 
$$
\lambda_1\le 1, \quad \lambda_2>1.
$$
Then there exists some $(a_0,b_0,c_0)\in \cL\cap B$ such that for any $d_1,d_2\in \cD$ satisfying~\eqref{eq:d123456789} we have 
\begin{align*}
\left((n_{d_1}m_{d_2}-n_{d_2}m_{d_1})^2, -4m_{d_1}m_{d_2}(n_{d_1}m_{d_2}+n_{d_2}m_{d_1}),(m_{d_1}m_{d_2})^2\right)&\\
=m(a_0,b_0,&c_0),
\end{align*}
for some $m\in \Z$. Note from~\eqref{eq:Lemma64DKSZ} for each $d_1,d_2\in \cD$ we have  $m_{d_1}m_{d_2}\neq 0$ and hence $c_0\neq 0$. This implies  
\begin{align*}
& \left(\frac{n_{d_1}}{m_{d_1}}-\frac{n_{d_2}}{m_{d_2}}\right)^2=\frac{a_0}{c_0},\\
& \frac{n_{d_1}}{m_{d_1}}+\frac{n_{d_2}}{m_{d_2}}=\frac{b_0}{c_0}.
\end{align*}
Hence
\begin{align*}
K\le \#\biggl\{ (d_1,d_2)\in \cD\times \cD : ~ \frac{n_{d_1}}{m_{d_1}}-\frac{n_{d_2}}{m_{d_2}}&=\pm\left(\frac{a_0}{c_0}\right)^{1/2}, \\
&   \frac{n_{d_1}}{m_{d_1}}+\frac{n_{d_2}}{m_{d_2}}=\frac{b_0}{c_0}  \biggr\} \le 4,
\end{align*}
since once $n_{d_1}/m_{d_1}$ is fixed, due to the coprimality condition 
in~\eqref{eq:Lemma64DKSZ}, $d^2_1$ is uniquely defined, and similarly for $d^2_2$. 
This implies~\eqref{eq:Kcond1-1}.  

Suppose next that 
\begin{equation}
\label{eq:l1l2-case2}
\lambda_1\le 1, \quad \lambda_2\le 1.
\end{equation}

Let $J(\ell,m,n)$ count the number of solutions to 
$$
m_{1}m_{2}=\ell, \quad n_{1}m_{2}+n_{2}m_{1}=m, \quad n_{1}m_{2}-n_{2}m_{1}=n,
$$
with 
\begin{equation}
\label{eq:n12m12}
|m_1|, |m_2|\ll \frac{N}{\Delta^2}, \quad |n_1|,|n_2|\ll \frac{N^2}{\Delta^2}, \quad   \quad m_1m_2n_1n_2\neq 0,
\end{equation}
 so that
\begin{equation}
\label{eq:IFzz66}
I(f)\ll \sum_{\substack{|m|,|n|\le CN^3/\Delta^4 \\ |\ell|\le CN^2/\Delta^4 \\ 4j^2f^4\ell^2 -4jf^2\ell m+ n^2 \equiv 0 \bmod{q}}}J(\ell,m,n),
\end{equation}
 for some absolute constant $C$. We next show that 
\begin{equation}
\label{eq:Jbnml}
J(\ell,m,n)=N^{o(1)}.
\end{equation} 
Estimates for the divisor function imply the number of solutions to 
$$
m_1m_2=\ell, \quad \text{$m_1,m_2$ satisfying~\eqref{eq:n12m12}},
$$
is at most $N^{o(1)}$. For each such $m_1,m_2$ there exists at most one solution to the system 
$$
n_1m_2-n_2m_1=n, \quad n_1m_2+n_2m_1=m, \quad \text{$n_1,n_2$ satisfying~\eqref{eq:n12m12}},
$$
which establishes~\eqref{eq:Jbnml}. By~\eqref{eq:KIf} and~\eqref{eq:IFzz66}
\begin{align*}
K \le \# \{(\ell,m,n) \in \Z^3&:~ |\ell|\le CN^2/\Delta^4 , \  |m|,|n| \le CN^3/\Delta^4, \ \\  & \quad n^2-4jf^2\ell m+4j^2f^4\ell^2\equiv 0 \bmod{q} \} N^{o(1)},
\end{align*}  
and hence 
\begin{equation}
\begin{split} 
\label{eq:KUBL}
K\le \# \Bigl\{(\ell,m,n) \in \Z^3&:\\
|\ell|   \le 2CN^2/\Delta^4,    &\ |m|\le  4C^2N^5/\Delta^8,   \ |n| \le CN^3/\Delta^4,   \\   &n^2+jf^2m+j^2f^4\ell^2\equiv 0 \bmod{q}
\Bigr \} N^{o(1)}. 
\end{split}
\end{equation}
By~\eqref{eq:Deltaassump}, for each $\ell,n \in \Z$, there exists at most one value of $|m|\ll N^5/\Delta^8$ satisfying 
$$
n^2+jf^2m+j^2f^4\ell^2\equiv 0 \bmod{q}.
$$
For any $(\ell_1,m_1,n_1)$ and $(\ell_2,m_2,n_2)$ satisfying the conditions of~\eqref{eq:KUBL}, there exists some $|m|\ll N^5/\Delta^8$ such that 
\begin{equation}
\label{eq:2points}
n_1^2+n_2^2-2jf^2m+j^2f^4(\ell^2_1+\ell_2^2)\equiv 0 \bmod{q}.
\end{equation}
Define the lattice 
$$
\cL=\{ (n,m,\ell)\in \Z^3 : ~n+jf^2m +j^2f^4\ell \equiv 0 \bmod{q} \},
$$
and the convex body 
\begin{align*}
B=\{ (n,m,\ell)\in \R^3   :~ |n|&\le C_0N^6/\Delta^4, \\
&  |m|\le  C_0N^5/\Delta^8,   \  |\ell|\le C_0N^4/\Delta^8  \},
\end{align*}
for a suitable constant $C_0$. Since for any integer $r$
$$
\# \{ n_1,n_2\in \Z   :~ n_1^2+n_2^2=r\} \le r^{o(1)},
$$
we see that~\eqref{eq:2points} implies 
$$
K^2\le \# \(\cL\cap B\)  N^{o(1)}.
$$
By~\eqref{eq:D3Kcond}, \eqref{eq:l1l2-case2} and Lemma~\ref{lem:shortpoint3}, there exists $(\ell,m,n)\neq (0,0,0)$ satisfying  
\begin{equation}
\label{eq:nml-1-1-1}
 |\ell|\le \frac{N^{11+o(1)}}{\Delta^{16}K^2}, \qquad |m|\le \frac{N^{10+o(1)}}{\Delta^{16}K^2}, \qquad 
 |n|\le \frac{N^{9+o(1)}}{\Delta^{16}K^2},
\end{equation} 
and 
\begin{equation}
\label{eq:jjf}
jf^2n\equiv m \bmod{q}, \quad j^2f^4n\equiv \ell \bmod{q}.
\end{equation}
Note we may assume
\begin{equation}
\label{eq:jfnmlcond}
\gcd(\ell, m,n)=1.
\end{equation}
Recall~\eqref{eq:If123}
\begin{equation}
\begin{split}
\label{eq:If12345}
 I(f)\le\#\{ (d_1,d_2) & \in \cD^2 :~   (n_{d_1}m_{d_2}-n_{d_2}m_{d_1})^2\\
&      -4jf^2m_{d_1}m_{d_2}(n_{d_1}m_{d_2}+n_{d_2}m_{d_1})  \\
& \qquad \qquad +4j^2f^4(m_{d_1}m_{d_2})^2\equiv 0 \bmod{q}\}.
\end{split} 
\end{equation}
If $d_1,d_2$ satisfy the conditions in~\eqref{eq:If12345}, then by~\eqref{eq:jjf}
\begin{align*}
& n(n_{d_1}m_{d_2}-n_{d_2}m_{d_1})^2-4mm_{d_1}m_{d_2}(n_{d_1}m_{d_2}+n_{d_2}m_{d_1})   \\ & \quad \qquad \qquad \qquad \qquad \qquad \qquad +4\ell (m_{d_1}m_{d_2})^2\equiv 0 \bmod{q},
\end{align*}
and hence from~\eqref{eq:D3Kcond}, assuming that $N$ is large enough, we derive
\begin{equation}
\begin{split}
\label{eq:limini}
  n(n_{d_1}m_{d_2}-n_{d_2}m_{d_1})^2-4mm_{d_1}m_{d_2}(n_{d_1}&m_{d_2}+n_{d_2}m_{d_1})   \\  &+4\ell (m_{d_1}m_{d_2})^2=0.
\end{split} 
\end{equation}
Similarly by~\eqref{eq:nml-1-1-1} and~\eqref{eq:jjf} we have $m^2\equiv n\ell  \bmod{q}$ and again~\eqref{eq:D3Kcond} ensures that 
$$
m^2=n\ell.
$$
Therefore~\eqref{eq:limini} implies the following equation 
$$
 \left(\frac{n_{d_1}}{m_{d_1}}-\frac{n_{d_2}}{m_{d_2}}\right)^2-4\left(\frac{n_{d_1}}{m_{d_1}}+\frac{n_{d_2}}{m_{d_2}}\right)\left(\frac{m}{n}\right)+4\left(\frac{m}{n}\right)^{2}=0.
$$
We see that 
\begin{equation}
\label{eq:nm123123123}
\frac{m}{n}=\frac{1}{2}\left(\frac{n_{d_1}}{m_{d_1}}+\frac{n_{d_2}}{m_{d_2}}\right)\pm \frac{\sqrt{n_{d_1}m_{d_1}n_{d_2}m_{d_2}}}{m_{d_1}m_{d_2}}.
\end{equation}
Hence from~\eqref{eq:Lemma64DKSZ} and~\eqref{eq:If12345}, there exists some constant $C$ such that
\begin{align*}
I(f)&\le \#\biggl\{ \(m_{d_1},m_{d_2},n_{d_1},n_{d_2} \)\in \Z^4:\\
& \qquad \qquad  |m_{d_1}|,|m_{d_2}|\le \frac{CN}{\Delta^2}, \  |n_{d_1}|,|n_{d_2}|\le \frac{CN^2}{\Delta^2}, 
\\ & \qquad \qquad \qquad\qquad m_{d_1}m_{d_2}n_{d_1}n_{d_2}\neq 0, 
\ \text{and~\eqref{eq:nm123123123} holds}   \biggr\}.
\end{align*}
Summing the above over $f\in \cF$, using~\eqref{eq:KIf} and noting that for each $\ell ,m, n$ satisfying~\eqref{eq:jfnmlcond} there exists $O(1)$ values of $f$ satisfying~\eqref{eq:jjf}, we see that $K\# \cF$ is bounded by the number of solutions to the equation~\eqref{eq:nm123123123} with integer variables satisfying 
$$
|m_{d_1}|,|m_{d_2}|\le \frac{CN}{\Delta^2}, \qquad |n_{d_1}|,|n_{d_2}|\le \frac{CN^2}{\Delta^2}, \qquad   n_{d_1}n_{d_2}m_{d_1}m_{d_2}\neq 0. 
$$
We see from~\eqref{eq:nm123123123} 
that $n_{d_1}m_{d_1}n_{d_2}m_{d_2}=r^2$ for some $r\in \Z$ and hence a  bound on the divisor function,
 see~\cite[Equation~(1.81)]{IwKow}, implies 
$$
K\#\cF\le  N^{o(1)}\#\left\{ \ell \le C^4 \frac{N^6}{\Delta^8} :~\ell=r^2 \ \text{for some $r \in \Z$}\right\}\le \frac{N^{3+o(1)}}{\Delta^4},
$$
which completes the proof.
\end{proof}

\subsection{Concluding the proof}
Let notation be as in Corollary~\ref{cor:4vbD}, so that
$$
T_{4,2}(N;j,q)=\sum_{x\in \F_q}(\cA\circ \cA \circ \cA \circ \cA)(x)^{2}.
$$  
By~\eqref{eq:T42123} we may assume that
\begin{equation}
\label{eq:Nassumption12}
N\le q^{1/3},
\end{equation}  
 Applying the dyadic pigeonhole principle, there exist $\Delta_1,\Delta_2 \ge 1$ and $\cD_1,\cD_2\subseteq \Fq$ given by 
$$
\cD_j=\{ x\in \Fq   :~ \Delta_j \le (\cA\circ \cA)(x)< 2\Delta_j\}, \qquad j=1,2, 
$$
such that
$$T_{4,2}(N;j,q) \le N^{o(1)}(\Delta_1\Delta_2)^2E(\cD_1,\cD_2),$$
where  
$$
E(\cD_1,\cD_2)=\sum_{x\in \F_q}(\cD_1\circ \cD_2)(x)^{2}.
$$
By the Cauchy-Schwarz inequality 
$$
E(\cD_1,\cD_2)\le E(\cD_1)^{1/2}E(\cD_2)^{1/2},
$$
and hence there exists some $\Delta$ and $\cD$ given by 
$$
\cD=\{ x\in \Fq   :~ \Delta \le (\cA \circ \cA)(x)< 2\Delta\},
$$
such that 
\begin{equation}
\label{eq:T4ED}
T_{4,2}(N;j,q) \le N^{o(1)}\Delta^{4}E(\cD).
\end{equation}
It is also obvious  from~\eqref{eq:EAI} that 
\begin{equation}
\label{eq:T2D2}
\Delta^2\(\# \cD \)\le T_{2,2}(N;j,q),
\end{equation}
and 
\begin{equation}
\label{eq:Dub-12}
\# \cD \le \Delta \#\cD \ll N^2.
\end{equation}

Isolating the diagonal contribution in $E(\cD)$, we write
$$
E(\cD)=(\#\cD)^2+\sum_{f\in \Fq^{*}}(\cD \circ \cD)(f)^2.
$$
We may assume 
\begin{equation}
\label{eq:EDED}
E(\cD)\le 2 \sum_{f\in \Fq^{*}}(\cD \circ \cD)(f)^2,
\end{equation}
since otherwise we have $E(\cD) \le 2 (\#\cD)^2$ and it  follows from the bounds~\eqref{eq:T4ED}  and~\eqref{eq:T2D2}  that 
 $$
T_{4,2}(N;j,q)  \le  \Delta^{4}(\#\cD)^2 N^{o(1)} \le \sT_{2,2}(N;j,q)^2 N^{o(1)} .
$$
Now, recalling the condition~\eqref{eq:Nassumption12} and using
Theorem~\ref{thm:T22-N}, we derive 
$$
T_{4,2}(N;j,q) \le N^{4+o(1)}.
$$

By~\eqref{eq:EDED} and the dyadic pigeonhole principle there exists some $K$ and a set 
$\cF\subseteq \Fq^{*}$ given by 
$$
\cF=\{ f\in \Fq^{*}   :~ K\le (\cD\circ \cD)(f)<2K\},
$$
such that
\begin{equation}
\label{eq:fDfD}
E(\cD) \le  K^{2}\#\cF N^{o(1)}.
\end{equation} 
Combining with~\eqref{eq:T4ED} and~\eqref{eq:fDfD} gives
\begin{equation}
\label{eq:T4dyadic}
T_{4,2}(N;j,q) \le  \Delta^{4}K^{2}\#\cF N^{o(1)}.
\end{equation}
We  apply Corollary~\ref{cor:4vbD} to estimate the right hand side of~\eqref{eq:T4dyadic}.

We now fix some $\varepsilon > 0$ and suppose first that one of~\eqref{eq:D3Kcond} or~\eqref{eq:Deltaassump} does not hold. In particular, assume
\begin{equation}
\label{eq:D3Kcond-1}
K < \left(\frac{N^{6}}{\Delta^{10}q^{1/2}}+\frac{N^{15/2}}{\Delta^{12}q^{1/2}}+\frac{N^{10}}{\Delta^{16}q^{1/2}}\right) N^{\varepsilon}
\end{equation}
or
\begin{equation}
\label{eq:Deltaassump-1}
\Delta <  \left(\frac{N^{3/2}}{q^{1/2}}+\frac{N^{5/8}}{q^{1/8}}\right)   N^{\varepsilon}.
\end{equation}
 If~\eqref{eq:D3Kcond-1} holds, then using the trivial bounds 
 $$
 K \#\cF \le (\#\cD)^2 \mand  \Delta  \#\cD \ll N^2\,,
 $$ 
we derive from~\eqref{eq:T4dyadic}
  \begin{equation}\label{eq:T42-1}
  \begin{split}
 T_{4,2}(N;j,q)& \le \Delta^{4}(\#\cD)^2K N^{o(1)} \le \Delta^2 K N^{4+o(1)}  \\
 &\le\left(\frac{N^{6}}{\Delta^{8}q^{1/2}}+\frac{N^{15/2}}{\Delta^{10}q^{1/2}}+\frac{N^{10}}{\Delta^{14}q^{1/2}}\right)
N^{4+\varepsilon+o(1)} \\
&\le \left(\frac{N^{6}}{q^{1/2}}+\frac{N^{15/2 }}{q^{1/2}}+\frac{N^{10}}{q^{1/2}}\right)N^{4+\varepsilon+o(1)}\\
& \le \frac{N^{10}}{q^{1/2}}N^{4+\varepsilon+o(1)} = \frac{N^{8}}{q^{1/2}}N^{6+\varepsilon+o(1)}. 
\end{split}
\end{equation}
If~\eqref{eq:Deltaassump-1} holds, then from~\eqref{eq:T4dyadic}
  \begin{equation}\label{eq:T42-2}
  \begin{split}
T_{4,2}(N;j,q)& \le N^{o(1)}\Delta^{4}(\# \cD)^3 \le N^{6+o(1)}\Delta\\
& \le \left(\frac{N^{3/2}}{q^{1/2}}+\frac{N^{5/8}}{q^{1/8}}\right)N^{6+o(1)}.
\end{split}
\end{equation}
Hence if one of the conditions~\eqref{eq:D3Kcond} or~\eqref{eq:Deltaassump} does not hold then 
combining~\eqref{eq:T42-1} and~\eqref{eq:T42-2} we obtain
  \begin{equation}\label{eq:T42-Comb}
T_{4,2}(N;j,q) \le \left(\frac{N^{5/8}}{q^{1/8}}+\frac{N^{8}}{q^{1/2}}\right)N^{6+\varepsilon+o(1)}.
\end{equation}

Suppose next that~\eqref{eq:D3Kcond-1} and~\eqref{eq:Deltaassump-1} both fail and thus
both~\eqref{eq:D3Kcond} and~\eqref{eq:Deltaassump} hold. By Corollary~\ref{cor:4vbD} we have either 
\begin{equation}
\label{eq:case1-12}
K\ll 1,
\end{equation}
or 
\begin{equation}
\label{eq:case2-12}
K \#\cF  \le \frac{N^{3+o(1)}}{\Delta^4}.
\end{equation}

If~\eqref{eq:case1-12} holds then from~\eqref{eq:T4dyadic} and the trivial bound  $K \#\cF \le (\#\cD)^2$, 
we derive
$$
T_{4,2}(N;j,q)\le  \Delta^{4}K^2\#\cF N^{o(1)} \le \Delta^{4}K\#\cF N^{o(1)} \le \Delta^4(\#\cD)^2 N^{o(1)}.
$$
Now the bound~\eqref{eq:T2D2} and Theorem~\ref{thm:T22-N}
(under the condition~\eqref{eq:Nassumption12}), yield
$$
T_{4,2}(N;j,q)   \le  T_{2,2}(N;j,q)^2 N^{o(1)}\le N^{4+o(1)}.
$$

If~\eqref{eq:case2-12} holds then using~\eqref{eq:Dub-12}
  \begin{equation}\label{eq:T42-3}
T_{4,2}(N;j,q)\le N^{3+o(1)}K \le N^{3+o(1)}\#\cD \le N^{5+o(1)}.
\end{equation}

Combining~\eqref{eq:T42-Comb} and~\eqref{eq:T42-3}, since $\varepsilon>0$ is arbitrary, we complete the proof.

\section{Proof of Theorem~\ref{thm:E2k}}
  
\subsection{Product polynomials }
\label{sec:prod poly}
In the proof of~\cite[Lemma~5.1]{SSZ1}, a certain polynomial in four variables with integer
coefficients played a key role. More precisely, it has been found in~\cite{SSZ1} that the polynomial
\begin{align*} 
F(U,V,X,Y) & = 64 UVXY\\
& \qquad  -  \(4U V + 4XY - \(X +Y -U - V\)^2\)^2 \,,
\end{align*}
has the following property. Letting $U = u^2$, $V = v^2$, $X = x^2$, and $Y = y^2$,
one has that $F(u^2, v^2, x^2, y^2) = 0$ for any $u, v, x, y$ for which $u+v = x+y$ (over any commutative ring). We now
proceed to discuss this property in a more general context.  

Denote 
$\cU_k = \{ \omega \in \C :~\omega^k = 1\}$ 
and  consider the polynomial
$$
G_k(X_1,X_2, X_3, X_4) = \prod_{\omega_1,\omega_2,\omega_3  \in\cU_k}
(\omega_1 X_1+ \omega_2 X_2 -  \omega_{3} X_{3} - X_4)
$$
defined over the cyclotomic field $K_k = \Q\(\exp(2 \pi i /k)\)$. 
Since the Galois group  $\Gal(K_k/\Q)$ of $K$ is cyclic and any automorphism $\sigma$ of $K_k$ over $\Q$ 
is a  multiplication by 
some $\omega \in \cU_k$, we see that
\begin{align*}
\sigma&\(G_k(X_1,X_2, X_3, X_4)\) \\
& =  \prod_{\omega_1, \omega_2, \omega_3  \in\cU_k}
\(\sigma\(\omega_1\)   X_1+ \sigma\(\omega_2\) X_2 -  \sigma\(\omega_{3}\) X_{3} - \sigma\(1\) X_4\)\\
 & =  \prod_{\omega_1, \omega_2, \omega_3  \in\cU_k}
\(\omega \omega_1   X_1+ \omega \omega_2  X_2 - \omega \omega_{3}  X_{3} -\omega X_4\)\\
&= \omega^{k^3}  \prod_{\omega_1, \omega_2, \omega_3  \in\cU_k}
\(\omega_1 X_1+ \omega_2 X_2 -  \omega_{3} X_{3} - X_4\)\\
&= G_k(X_1,X_2, X_3, X_4).
\end{align*}
Hence $G_k$ has rational coefficients. Since obviously these coefficients are algebraic integers, we see that 
$G_k\(X_1,X_2, X_3, X_4\) \in \Z[X_1,X_2, X_3, X_4]$.

We also see that 
\begin{align*}
  \prod_{\omega_1, \omega_2, \omega_3 \in\cU_k}&
\(\omega_1 X_1+ \omega_2 X_2 -  \omega_{3} X_{3} - X_4\)\\
 & =  \prod_{\omega_1, \omega_2, \omega_3  \in\cU_k}
\(\omega_1 X_1+  \omega_1  \omega_2 X_2 - \omega_1  \omega_{3} X_{3} - X_4\)\\
& =  \prod_{\omega_2, \omega_3 \in \cU_k}
  \prod_{\omega_1 \in \cU_k} 
\(\omega_1 \(X_1+    \omega_2 X_2 -  \omega_{3} X_{3}\) - X_4\)\\
& =  (-1)^k \prod_{\omega_2, \omega_3 \in \cU_k} \( \(X_1+    \omega_2 X_2 -  \omega_{3} X_{3}\)^k - X_4^k\)
\end{align*}
Therefore $G_k(X_1,X_2, X_3, X_4)$ is a polynomial in $X_4^k$. Similarly,
\begin{align*}
  \prod_{\omega_1, \omega_2, \omega_3 \in\cU_k}&
\(\omega_1 X_1+ \omega_2 X_2 -  \omega_{3} X_{3} - X_4\)\\
 & =  \prod_{ \omega_2, \omega_3  \in\cU_k}   \prod_{\omega_1 \in \cU_k} 
\(  X_1+  \omega_1^{-1}  \( \omega_2 X_2 -   \omega_{3} X_{3} - X_4\)\)\\
& =  \prod_{\omega_2, \omega_3 \in \cU_k}
\( X_1^k+    \( \omega_{3} X_{3} +X_4- \omega_2 X_2 \)^k\)
\end{align*}
Thus, it is also a polynomial in $X_1^k$ and of course also in $X_2^k$ and $X_3^k$. 
Hence we can write 
$$
G_k(X_1,X_2, X_3, X_4) = F_k\(X_1^k,X_2^k, X_3^k, X_4^k\)
$$
for some polynomial $F_k\(X_1,X_2, X_3, X_4\) \in \Z[X_1,X_2, X_3, X_4]$. 

\begin{remark}
It is clear that this construction can be extended in several directions, 
in particular to polynomials $F_{\nu,k} \in \Z[X_1, \ldots, X_{2\nu}]$ such that 
$$
F_{\nu,k}\(x_1^k, \ldots, x_{2\nu}^k\) = 0
$$
whenever $x_1+ \ldots +x_\nu = x_{\nu+1} + \ldots + x_{2\nu}$. 
\end{remark}   
 
\subsection{The zero set of $F_k(X_1,X_2,X_3,X_4)$} 
We now need the following bound on the number of integer zeros of $F_k$ in a box.
Denote by $T_k(N)$ the number of solution to the equation
\begin{align*}
\# \{ (n_1, n_2, n_3, n_4) \in \Z^4 :~  1 &\le n_1,n_2,n_3, n_4 \le N,\\
&  \qquad F_k(n_1,n_2,n_3,n_4) = 0 \} \ll N^2.
\end{align*}

\begin{lemma}\label{lem: Bound T(N)}
Fix an integer $k \ge 3$. For any positive integer $N$, we have $T_k(N) \ll  N^{2}$. 
\end{lemma}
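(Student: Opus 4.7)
The plan is to exploit the factorization
\[
G_k(X_1,X_2,X_3,X_4) = \prod_{\omega_1,\omega_2,\omega_3 \in \cU_k} \(\omega_1 X_1 + \omega_2 X_2 - \omega_3 X_3 - X_4\)
\]
established in Section~\ref{sec:prod poly}. Evaluating at $X_i = n_i^{1/k}$ (the positive real $k$-th roots), the hypothesis $F_k(n_1,n_2,n_3,n_4) = 0$ is equivalent to $G_k(n_1^{1/k},\ldots,n_4^{1/k}) = 0$, so at least one linear factor vanishes: there exist $\omega_1,\omega_2,\omega_3 \in \cU_k$ with
\[
\omega_1 n_1^{1/k} + \omega_2 n_2^{1/k} = \omega_3 n_3^{1/k} + n_4^{1/k}. \qquad (\star)
\]
Since $k$ is fixed there are $O(1)$ such triples $(\omega_1,\omega_2,\omega_3)$, so it suffices to prove that each contributes $O(N^2)$ tuples $(n_1,\ldots,n_4) \in [1,N]^4$ satisfying $(\star)$.

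First I would handle the ``generic'' case where at least one $\omega_j$ is non-real. Say $\Im \omega_1 \neq 0$: the imaginary part of $(\star)$ reads
\[
\Im \omega_1 \cdot n_1^{1/k} + \Im \omega_2 \cdot n_2^{1/k} - \Im \omega_3 \cdot n_3^{1/k} = 0,
\]
which determines $n_1^{1/k}$, hence $n_1$, as a function of $(n_2,n_3)$. The real part of $(\star)$ then determines $n_4$ from $(n_1,n_2,n_3)$. Thus at most $N^2$ tuples arise; the sub-cases $\Im \omega_2 \ne 0$ and $\Im \omega_3 \ne 0$ are symmetric and give the same bound.

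The remaining case is $\omega_1,\omega_2,\omega_3 \in \{-1,+1\}$, which is only possible when $k$ is even (otherwise all $\omega_j$ must equal $1$). After rearranging signs, $(\star)$ reduces to one of
\[
n_a^{1/k} + n_b^{1/k} = n_c^{1/k} + n_d^{1/k}
\qquad \textnormal{or}\qquad
n_a^{1/k} + n_b^{1/k} + n_c^{1/k} = n_d^{1/k},
\]
for some permutation $(a,b,c,d)$ of $(1,2,3,4)$, since the ``four on one side'' form is excluded by positivity. Writing $n_i = a_i^k m_i$ with $m_i$ the $k$-th power free part of $n_i$, I would invoke the classical theorem that the $k$-th roots $m^{1/k}$ over distinct $k$-th power free positive integers $m$ are linearly independent over $\Q$ (Besicovitch and its generalizations). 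Partitioning $\{a,b,c,d\}$ by the value of $m_i$ and comparing coefficients of each $m^{1/k}$, a short case analysis shows that either the $m_i$ occur in matching pairs forcing $\{n_a,n_b\} = \{n_c,n_d\}$ (which yields $O(N^2)$ tuples), or else all four $m_i$ equal a common $m$, in which case the relation reduces to an additive identity among integers of size $\le (N/m)^{1/k}$ whose total count is bounded by $\sum_{m \le N}(N/m)^{3/k} \ll N^{1+o(1)}$; all other partitions are incompatible with the positivity of the $a_i$.

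Summing over the $O(1)$ triples $(\omega_1,\omega_2,\omega_3)$ yields $T_k(N) \ll N^2$. The main obstacle is the combinatorial case analysis in the real case: one must rule out every partition of $\{m_1,m_2,m_3,m_4\}$ other than ``matching pairs'' and ``all equal'' using positivity of the coefficients, and in particular check that the ``three on one side'' form is consistent only with the all-equal case, where it contributes $O(N^{1+o(1)})$.
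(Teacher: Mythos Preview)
Your proposal is correct and follows essentially the same approach as the paper: both reduce $F_k(n_1,\dots,n_4)=0$ to a vanishing linear factor $\omega_1 t_1 + \omega_2 t_2 - \omega_3 t_3 - t_4 = 0$, handle the non-real case via complex conjugation (equivalently, taking imaginary parts) to cut down to two free variables, and treat the all-real case using linear independence of $k$-th roots (Besicovitch/Carr--O'Sullivan). The only cosmetic difference is in the real case, where the paper extracts a linearly dependent pair and counts via the reduced relation $(a_1 \pm a_2) b^{1/k} \pm t_3 \pm t_4 = 0$, while you carry out an equivalent partition analysis on the $k$-th power free parts; both routes yield the same $O(N^2)$ bound.
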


\begin{proof}
 Take a solution $(n_1, n_2, n_3, n_4)$  to $F_k( n_1, n_2, n_3, n_4) = 0$ 
satisfying 
$1 \le n_1, n_2, n_3, n_4 \le N$. Denote by $t_1, t_2, t_3, t_4$ the positive real numbers that are
roots of order $d$ of $n_1, n_2, n_3, n_4$ respectively.   

Therefore there exist roots of unity $\omega_1,\omega_2,\omega_3 \in \mathcal{U}_d$ such that
\begin{equation}\label{eq:lin eq ti}
 \omega_1 t_1 + \omega_2 t_2 - \omega_3 t_3 - t_4  = 0.
\end{equation}

We now distinguish two cases.

{\it Case~1.} At least one of the roots of unity $\omega_1$, $\omega_2$, $\omega_3$ is not real.
Complex conjugation then provides a second linear equation,
\begin{equation}
\label{eq:conj lin eq ti}
 \bar \omega_1 t_1 + \bar \omega_2 t_2 - \bar \omega_3 t_3 - t_4  = 0.
\end{equation}
 which is different from~\eqref{eq:lin eq ti}.  
 Then using~\eqref{eq:lin eq ti} and~\eqref{eq:conj lin eq ti} to eliminate $t_4$ one obtains a nontrivial linear equation
in $t_1, t_2$ and $t_3$ which obviously has at most $O(N^2)$ solutions, after which $t_4$ is uniquely defined. 

Thus the total number of solutions in Case~1 is $O(N^2)$.

{\it Case~2.}  All three of $\omega_1, \omega_2, \omega_3$ are real, that is, $\omega_1, \omega_2, \omega_3 \in \{ -1, 1\}$, and the
equation~\eqref{eq:lin eq ti} reduces to
\begin{equation}\label{eq: real lin eq ti}
 t_1 \pm  t_2 \pm  t_3 \pm t_4  = 0.
\end{equation}
We observe that {\it Case~2\/}  also covers the $2N^2 + O(N)$ diagonal solutions. 

To treat the non-diagonal solutions, one can now apply results of
Besicovitch~\cite{B}, Mordell~\cite {M},  Siegel~\cite{S}, or the more recent results
of Carr and O'Sullivan~\cite{CS}. For instance,~\cite[Theorem~1.1]{CS} shows that
a set of real $k$-th roots of integers that are pairwise linearly independent over the
rationals must also be linearly independent. Applying this to the set $t_1, t_2, t_3, t_4$,
which by~\eqref{eq: real lin eq ti} is not linearly independent over $\Q$, it follows that
two of them,  for example, $t_1$ and $t_2$, are linearly dependent over $\Q$. We 
derive that there are positive integers $a_1, a_2, b$ such that 
$$
t_1^k = n_1 = b a_1^k \mand t_2^k = n_2 = b a_2^k .
$$ 
where $b$ is not divisible by a $k$-th power of a prime. 
That is, 
$a_1^k$ is the largest $k$-th power that divides $n_1$,
and $a_2^k$ is the largest $k$-th power that divides $n_2$.

Then letting $t_5$ denote the positive $k$-th
root of $b$, the equation~\eqref{eq: real lin eq ti} becomes
\begin{equation}\label{eq: a12 t534}
(a_1 \pm a_2) t_5 \pm t_3 \pm t_4 = 0.
\end{equation}

Without loss of generality, we can assume that $a_1 \ge a_2$. 
Hence for any fixed $1\le a_2 \le a_1 \le N^{1/k}$ there are at most $N/a_1^k$ possible 
values for $b$ and thus for $t_5$. After $a_1$, $a_2$ and $t_5$ are fixed, 
there are obviously at most $N$ pairs $(t_3,t_4)$ satisfying~\eqref{eq: a12 t534}.
Hence the total contribution from such solutions is 
$$
\sum_{ 1\le a_2 \le a_1 \le N^{1/k}} N^2/a_1^k 
\le  \sum_{ 1  \le a_1 \le N^{1/k}} N^2/a_1^{k-1}  
\ll N^2
$$
which concludes the proof. 
\end{proof}

We remark that the case of $k=2$ can also be included in Lemma~\ref{lem: Bound T(N)}
however this case is already fully covered by the results of~\cite{SSZ1}.

\subsection{Concluding the proof}  Clearly the congruence
$$
u+v \equiv x + y\bmod q, \qquad ju^k,jv^k, jx^k, jy^k \in [1,N]
$$
implies that 
$$
F_k(u^k,v^k, x^k, y^k )  \equiv  0 \bmod q
$$
for the above  polynomial $F_k$. Since $F_k$ is homogenous this implies that 
$$
F_k(ju^k,jv^k, jx^k, jy^k )  \equiv  0 \bmod q.
$$

Since for a prime $q\sim Q$, $a \in \F_q$ and $j \in \F_q^*$, there are at most $k$ solutions to the congruence 
$jz^k \equiv a \bmod q$ in variable $z\in \F_q$, and thus at most $2k$ solution in variable $z \in [1,N]$ (since $N \le Q \le 2q$)    we have 
$$
\sum_{\substack{ q  \sim Q \\ q~\text{prime}}}  \max_{j \in \F_q^*}   \sE_k(N;j,q)  \le   16k^4 \sum_{\substack{ q  \sim Q \\ q~\text{prime}}}  
\, \ssum_{\substack{U,V,X,Y \in[1,N]\\ F_k(U,V,X,Y) \equiv 0 \bmod q}} 1 . 
$$
Changing  the order of summation and separating the sum over the variables  $U,V,X,Y$ into two parts depending 
on whether  $F(U,V,X,Y)  = 0$ or not, we derive 
\begin{align*}
\sum_{\substack{ q  \sim Q \\ q~\text{prime}}} &  \max_{j \in \F_q^*}   \sE_k(N;j,q)   \ll   \ssum_{U,V,X,Y \in [1,N]} \, \sum_{\substack{q  \sim Q \\ q~\text{prime} \\ q \mid F_k(U,V,X,Y)}}  1\\
 & \ll  \frac{Q}{\log Q}   \ssum_{\substack{U,V,X,Y \in[1,N]\\ F_k(U,V,X,Y) = 0}} 1 +   \ssum_{\substack{U,V,X,Y \in [1,N]\\ F_k(U,V,X,Y) \ne 0}  } 
\sum_{\substack{q  \sim Q \\ q~\text{prime} \\ q \mid F_k(U,V,X,Y)}}  1.  
\end{align*}
Recall that $F_k$ is a polynomial with constant coefficients of degree $k^3$. 
Hence 
$F_k(U,V,X,Y) \ll N^{k^3}$, and thus trivially has at most $O\(\log N\)$ prime divisors. Hence, 
we derive 
$$
\sum_{\substack{ q  \sim Q \\ q~\text{prime}}}  \max_{j \in \F_q^*}   \sE_k(N;j,q) \ll        \frac{Q}{\log Q}   T_k(N)  + N^{4+o(1)} , 
$$
and applying Lemma~\ref{lem: Bound T(N)} we conclude the proof.

\begin{remark}
Furthermore it is easy to see that there is a constant $C>0$ such that if $N \le q^{1/k^3}$ then 
$ F_k(n_1,n_2,n_3,n_4) \equiv 0 \bmod q$ with  $1 \le n_1,n_2,n_3, n_4 \le N$ implies 
$ F_k(n_1,n_2,n_3,n_4) = 0$. Hence in this range of $N$, using Lemma~\ref{lem: Bound T(N)}, 
we obtain $\sE_{k} (N;j,q)  \ll   N^2$ for every $q$.  
\end{remark}  
\section{Proof of Theorem~\ref{thm:E2k-Set}}

\subsection{Preliminary discussion} 
We need some facts about the {\it Gowers norms\/}, introduced in the celebrated  work of Gowers~\cite{Gow_4,Gow_m}  on  the first quantitative bound for the famous Szemer\'edi Theorem~\cite{Sz}  about sets avoiding arithmetic progressions of length four and longer. 
As an important step in the proof, 
Gowers~\cite{Gow_4,Gow_m} observes that there are very random sets having an unexpected number of arithmetic progressions of length $l\ge 4$.
An example is, basically, the set 
\begin{equation}\label{def:Gowers_Ak}
    \cA^{(k)} =\left \{ x \in \Z_N :~ x^k \in \{1,\ldots, c_k N\}  \right\} \,,
\end{equation} 
where $c_k >0$ is an appropriate constant, depending on $k\ge 2$ only (see the beginning of~\cite[Section~4]{Gow_m} and  also~\cite{Gow_exm}). 
Then the set $\cA^{(k)}$ has an enormous number of arithmetic progressions of length $k+2$ but the expected number of shorter progressions. 
In Theorem~\ref{thm:E2k-Set} we consider the sets $\cN^{1/k}$, where $\cN$ is a set with small doubling. Clearly, such sets generalise the construction~\eqref{def:Gowers_Ak}. 
Below we show 
that these sets are random in the sense, that they all have small  additive energy. Actually, we obtain a stronger property that Gowers norms of its characteristic functions are small and thus this has even more parallels to the Gowers 
construction~\eqref{def:Gowers_Ak}.  
On the other hand, sets $\cN^{1/k}$ preserve all essential combinatorial properties of the sets $\cA^{(k)}$. For example, for $k=2$ and any $s\neq 0$ we have for an arbitrary $x\in \cN^{1/2} \cap (\cN^{1/2} + s)$ that $x\in (\cN-\cN - s^2)/2s$ and hence all intersections $\cN^{1/2} \cap (\cN^{1/2} + s)$ are additively rich sets exactly as in construction~\eqref{def:Gowers_Ak} (we literally use 
such facts in the proof of Theorem~\ref{thm:E2k-Set} below).

\subsection{Gowers norms}
Now we are ready to give general definitions. 
Suppose that  $G$ is an abelian group with the group operation $+$ and  $\cA\subseteq G$ is a finite set.
Having a sequence of elements  $s_1,\ldots,s_l \in G$ we define the set 
$$\cA_{s_1,\ldots,s_l} = \cA \cap (\cA -s_1) \cap \ldots \cap (\cA -s_l).
$$
Let 
$
\| \cA \|_{\mathcal{U}^{k}}
$
be the Gowers non-normalised $k$th-norm~\cite{Gow_m} of the characteristic function of $\cA$ (in additive form). We have, see, for example,~\cite{s_energy}:
$$
\| \cA \|_{\mathcal{U}^{k}}
=
\sum_{x_0,x_1,\ldots,x_k \in G}\,  \prod_{\varepsilon \in \{ 0,1 \}^k} \cA \left( x_0 + \sum_{j=1}^k \varepsilon_j x_j  \right) \,,
$$
where $\varepsilon = (\varepsilon_1,\ldots,\varepsilon_k)$ (we also recall that we use $\cA(a)$ for the indicator function of $\cA$). 
In particular, 
$$
\| \cA \|_{\mathcal{U}^{2}} = \sum_{x_0,x_1,x_2\in G} \cA(x_0) \cA(x_0 + x_1) \cA(x_0 + x_2) \cA(x_0 + x_1 + x_2) = E(\cA)
$$
is the additive energy of $\cA$, 
that is
$$
E(\cA) = \# \{(a_1,a_2,a_3,a_4) \in \cA^4:~a_1 + a_2 = a_3 + a_4\} ,
$$
 and
$$
\| \cA \|_{\mathcal{U}^{3}} = \sum_{s \in \cA-\cA} E(\cA_s) \,.
$$
Moreover, the induction property for Gowers norms holds, see~\cite{Gow_m}
\[
\| \cA \|_{\mathcal{U}^{k+1}} = \sum_{s \in \cA-\cA} \| \cA_s \|_{\mathcal{U}^{k}} 
\]
and 
\begin{equation}\label{f:Gowers_sums_A_s}
\| \cA \|_{\mathcal{U}^k} = \sum_{s_1,\ldots,s_k\in G} \# \cA_{\pi(s_1,\ldots,s_k)} \,,
\end{equation}  
where $\pi(s_1,\ldots,s_k)$ is a vector with $2^k$ components, namely,
$$
\pi(s_1,\ldots,s_k) = 
\left( \sum_{j=1}^k s_j \varepsilon_j \right)_{\(\varepsilon_1, \ldots, \varepsilon_k\) \in \{ 0,1 \}^k} \,.
$$ 
Notice also
\begin{equation}\label{f:Gowers_sq_A}
\| \cA \|_{\mathcal{U}^{k+1}} = \sum_{s_1,\ldots,s_k \in G} \(\# \cA_{\pi(s_1,\ldots,s_k)}\)^2 \,.
\end{equation}

It is proved in~\cite{Gow_m} that $k$th--norms of the characteristic function of any set are connected to each other.
It is shown in~\cite{s_energy}   that the connection for the non-normalised norms does not depend on size of the group $G$. 
Here we formulate a particular case of~\cite[Proposition 35]{s_energy}, 
which relates  $\| \cA \|_{\mathcal{U}^{k}}$ and $\| \cA \|_{\mathcal{U}^{2}}$.

\begin{lemma}
	\label{l:Gowers_char-1}
	Let $\cA$ be a finite subset of an abelian group $G$ with the group operation $+$.
	Then for any integer $k\ge 1$, we have
	$$
	    \| \cA \|_{\mathcal{U}^{k+1}} \ge
	    \frac{\| \cA \|^{(3k-2)/(k-1)}_{\mathcal{U}^{k}}}{\| \cA \|^{2k/(k-1)}_{\mathcal{U}^{k-1}}} \,.
	$$
\end{lemma}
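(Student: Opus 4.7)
Write $T_m := \|\cA\|_{\mathcal{U}^m}$ for brevity; the claim reads $T_{k+1}^{k-1} T_{k-1}^{2k} \ge T_k^{3k-2}$. Using the representations from \eqref{f:Gowers_sums_A_s} and \eqref{f:Gowers_sq_A}, all three quantities admit representations as sums over the common index set $G^{k-1}$. Setting $\psi_{k-1}(s) := \#\cA_{\pi(s)}$ and $B_s := \cA_{\pi(s)}$ for $s \in G^{k-1}$, we have
\[
T_{k-1} = \sum_{s \in G^{k-1}} \psi_{k-1}(s), \quad T_k = \sum_{s \in G^{k-1}} \psi_{k-1}(s)^2, \quad T_{k+1} = \sum_{s \in G^{k-1}} E(B_s),
\]
and moreover $\psi_{k-1}(s)^2 = \sum_{t \in G} \psi_k(s,t)$ and $E(B_s) = \sum_t \psi_k(s,t)^2$, which interconnect the three quantities. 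A sanity check is that the inequality is \emph{tight} in the model case $\cA = \{1,\ldots,N\}$, for which $T_m \asymp N^{m+1}$, forcing the exponents $(3k-2)/(k-1)$ and $2k/(k-1)$ in the statement.

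My plan is to introduce the auxiliary moments $S_m := \sum_s \psi_{k-1}(s)^m$, which interpolate between $T_{k-1} = S_1$ and $T_k = S_2$. From $(m-1)$ iterated applications of the Cauchy--Schwarz inequality $(\sum x^j)^2 \le (\sum x^{j-1})(\sum x^{j+1})$, one obtains the lower bound
\[
S_m \ge \frac{T_k^{m-1}}{T_{k-1}^{m-2}}, \qquad m \ge 2.
\]
In the other direction, the pointwise identity $(\#B_s)^4 \le |B_s - B_s| \cdot E(B_s)$ (obtained by Cauchy--Schwarz applied to $\sum_t \#(B_s \cap (B_s - t)) = (\#B_s)^2$) gives upper bounds on $S_4$ and higher moments in terms of $T_{k+1}$ after absorbing the $|B_s - B_s|$ factor into either a trivial $N^2$ or into lower moments of $\psi_{k-1}$. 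Matching the lower bound on $S_m$ with the appropriate upper bound, and then raising to the right power and interpolating via Hölder's inequality with three terms (so as to produce the three sums $T_{k-1}$, $T_k$, $T_{k+1}$ with the exponents appearing in the statement), gives the desired inequality.

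The main obstacle is the \emph{quantitative} matching of exponents. A single Cauchy--Schwarz step yields only the weaker bound $T_{k+1} T_{k-1}^2 T_1 \gtrsim T_k^3$, whose exponent of $T_{k+1}$ is $1$ rather than the desired $k-1$. To gain the correct exponent one must iterate the Cauchy--Schwarz $k-1$ times, each round producing a factor of $T_{k-1}^{2/(k-1)}$ absorbed into the right-hand side, and carefully tracking how the intermediate $\psi_k(s,t)$ terms propagate between iterations using the identity $\psi_{k-1}(s)^2 = \sum_t \psi_k(s,t)$ and the pointwise bound $\psi_k(s,t) \le \psi_{k-1}(s)$. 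This telescoping procedure is the technical content of Proposition~35 in \cite{s_energy}, from which the stated inequality is a special case.
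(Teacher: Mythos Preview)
The paper gives no proof here; the lemma is simply quoted as a special case of \cite[Proposition~35]{s_energy}. Your proposal ultimately defers to the same reference, so in that narrow sense you and the paper agree.

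The sketch you offer before the citation, however, has a real gap at the step where you ``absorb the $|B_s - B_s|$ factor''. Neither option you mention succeeds. Bounding $|B_s - B_s|$ by a trivial power of $N$ (or by $|\cA-\cA|$) introduces a quantity not present in the statement and which cannot be eliminated afterwards. Bounding it instead by $(\#B_s)^2 = \psi_{k-1}(s)^2$ collapses your pointwise inequality $(\#B_s)^4 \le |B_s-B_s|\cdot E(B_s)$ to the triviality $E(B_s) \ge (\#B_s)^2$, and summing that over $s$ yields only $T_{k+1} \ge T_k$ --- far weaker than the claim. Your moment lower bound $S_m \ge T_k^{m-1}/T_{k-1}^{m-2}$ is correct, but you never produce a matching \emph{upper} bound on any $S_m$ in terms of $T_{k-1}$, $T_k$, $T_{k+1}$ alone, so the ``matching'' step you announce cannot be carried out as described. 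The outline as written does not establish the lemma; the substance really does live inside the cited proposition.
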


Next we have to   relate  $\| \cA \|_{\mathcal{U}^{k}}$ and $E(\cA)$, see~\cite[Remark~36]{s_energy}.

 \begin{lemma}
	\label{l:Gowers_char-2}
	Let $\cA$ be a finite subset of an abelian group $G$ with the group operation $+$.
	Then for any integer $k\ge 1$, we have
		$$
	\| \cA \|_{\mathcal{U}^{k}} \ge E(\cA)^{2^k-k-1}\( \# \cA\)^{-(3\cdot 2^k -4k -4)} \,.
	$$
\end{lemma}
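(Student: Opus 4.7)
The plan is to prove the bound by induction on $k$, using Lemma~\ref{l:Gowers_char-1} as the main tool. The base cases $k=1$ and $k=2$ are immediate and hold with equality, since $\|\cA\|_{\mathcal{U}^1} = (\#\cA)^2$ and $\|\cA\|_{\mathcal{U}^2} = E(\cA)$ match the claimed exponents $a_1 = 2^1-1-1 = 0$, $b_1 = 3\cdot 2 - 4 - 4 = -2$ and $a_2 = 1$, $b_2 = 0$.

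For the inductive step, a direct computation verifies that the target exponents $a_k = 2^k - k - 1$ and $b_k = 3 \cdot 2^k - 4k - 4$ both satisfy the linear recurrence
\[
(k-1)\, x_{k+1} = (3k-2)\, x_k - 2k\, x_{k-1},
\]
which is precisely the recurrence one obtains by substituting the ansatz $\|\cA\|_{\mathcal{U}^j} = E(\cA)^{a_j}/(\#\cA)^{b_j}$ into Lemma~\ref{l:Gowers_char-1} and equating exponents of $E(\cA)$ and of $\#\cA$ separately. Consequently, applying Lemma~\ref{l:Gowers_char-1} with $k=2$ and the exact values of $\|\cA\|_{\mathcal{U}^1}$ and $\|\cA\|_{\mathcal{U}^2}$ yields $\|\cA\|_{\mathcal{U}^3} \ge E(\cA)^4/(\#\cA)^8$; a further application with $k=3$ then gives $\|\cA\|_{\mathcal{U}^4} \ge E(\cA)^{11}/(\#\cA)^{28}$, establishing the cases $k=3,4$.

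The main obstacle for $k \ge 5$ is that Lemma~\ref{l:Gowers_char-1} places $\|\cA\|_{\mathcal{U}^{k-1}}$ in the denominator of the inductive bound, while the induction hypothesis only supplies a lower bound for this quantity; a matching upper bound is not available in general. To overcome this, I would revisit the Cauchy--Schwarz argument underlying Lemma~\ref{l:Gowers_char-1} (i.e.\ the proof of Proposition~35 of~\cite{s_energy}) and apply it directly to the representation~\eqref{f:Gowers_sums_A_s}, combined with a dyadic decomposition of $\#\cA_{\pi(s_1,\ldots,s_k)}$ over level sets and the trivial bound $\#(\cA - \cA) \le (\#\cA)^2$ on the support of the shift variables. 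This replaces the troublesome factor $\|\cA\|_{\mathcal{U}^{k-1}}$ by an expression in $E(\cA)$ and $\#\cA$ alone, after which tracking the exponents through the recurrence above closes the induction and yields the stated bound. The algebra is routine but bookkeeping-heavy, amounting to verifying that the pair $(a_k,b_k)$ is the unique solution of the recurrence with the prescribed initial data at $k = 1, 2$.
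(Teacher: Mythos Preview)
The paper does not supply a proof of this lemma; it simply cites~\cite[Remark~36]{s_energy}. So there is no in-paper argument to compare against, and I evaluate your proposal on its own merits.

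Your argument is complete and correct for $k \le 4$. The base cases $k=1,2$ are identities, and for $k=3,4$ your use of Lemma~\ref{l:Gowers_char-1} is legitimate precisely because the denominator $\|\cA\|_{\mathcal{U}^{k-1}}$ at those stages is one of the exactly known quantities $\|\cA\|_{\mathcal{U}^1} = (\#\cA)^2$ or $\|\cA\|_{\mathcal{U}^2} = E(\cA)$, so no inequality in the wrong direction is required. The exponent checks you give ($E(\cA)^4/(\#\cA)^8$ for $k=3$ and $E(\cA)^{11}/(\#\cA)^{28}$ for $k=4$) are correct.

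For $k \ge 5$ there is a genuine gap. You have correctly identified the obstruction: Lemma~\ref{l:Gowers_char-1} places $\|\cA\|_{\mathcal{U}^{k-1}}$ in the denominator, and for $k-1 \ge 3$ the inductive hypothesis only furnishes a \emph{lower} bound, which is useless there. But your proposed remedy---``revisit the Cauchy--Schwarz argument underlying Lemma~\ref{l:Gowers_char-1} \ldots combined with a dyadic decomposition of $\#\cA_{\pi(s_1,\ldots,s_k)}$ \ldots and the trivial bound $\#(\cA-\cA) \le (\#\cA)^2$''---is a plan, not a proof. You do not specify which inequality is applied to which sum, nor verify that the resulting exponents come out to $2^k - k - 1$ and $3 \cdot 2^k - 4k - 4$. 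Your observation that these two sequences satisfy the recurrence $(k-1)x_{k+1} = (3k-2)x_k - 2k x_{k-1}$ is correct but no longer helpful once Lemma~\ref{l:Gowers_char-1} is abandoned as the inductive engine, since that recurrence encodes precisely the exponent balance of Lemma~\ref{l:Gowers_char-1} and nothing else. To close the argument for general $k$ you must either produce a matching \emph{upper} bound on $\|\cA\|_{\mathcal{U}^{k-1}}$ (which is false in general) or actually carry out the direct argument you allude to, with the exponents tracked explicitly; as written, the case $k \ge 5$ is not established.
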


\subsection{Concluding the proof}	
Let $\cA = \cN^{1/k}$. 

\subsubsection{Case $k=3$} 
	Let us start with the case $k=3$. 
	Below we can assume that the quantity $L$ is sufficiently small because otherwise the result is  trivial. 
	
	For any $s\neq 0$ consider the set $\cA_s = \cA \cap (\cA-s)$ and let $x\in \cA_s$. 
	Then $x^3, (x+s)^3 \in \cN$ and hence 
	\[
		3s (x+s/2)^2 - 3s^3/4 = 3sx^2 + 3s^2 x + s^3 \in \cN - \cN \,.
	\] 
	Put $\cB_s =\cA_s + s/2$, so  $\# \cB_s = \# \cA_s$. Furthermore, let $\cC_s = \{x^2:~ x \in \cB_s\}$.
	Clearly, by the Pl\"unnecke inequality, see~\cite[Corollary~6.29]{TaoVu}, 
	\[
		\#(\cC_s + \cC_s) \le \#(2\cN - 2\cN) \le  L^4 N =  
		L_s \# \cA_s \,, 
	\]
	where 
	$$
	L_s =  \frac{L^4 N}{\# \cA_s}.
	$$
	Then, after that  applying   estimate~\eqref{eq:Energy-SSZ-Set-N} with our restriction $N\le q^{2/3}$, we  
	obtain 
  \begin{equation}\label{eq:EAs_EBs}
  \begin{split}
E(\cA_s) &   = 
E(\cB_s)  \ll    E_{2}(\cC_s;q)\\
&  \le  \(  L^4_s \(\# \cA_s\)^4/q+ L^2_s \(\# \cA_s\)^{11/4}\) q^{o(1)} \,.
\end{split}
\end{equation}

We now assume that 
\begin{equation}\label{eq: Large  A_s}
\#\cA_s \ge N^{4/5} L^{32/5}.
\end{equation}
We also observe that 
we can always assume that $L\le N^{1/32}$  as otherwise the result is trivial.  
Further to show that that the second term in~\eqref{eq:EAs_EBs} dominates the first one,  
we need to check  that 
\begin{equation}\label{eq: T1<T2}
L^4_s \(\# \cA_s\)^4/q\le  L^2_s \(\# \cA_s\)^{11/4}
\end{equation}  
or $L^2_s \( \#\cA_s\)^{5/4} \le q$, which in turn is equivalent to $\( \#\cA_s\)^{3} \ge L^{32} N^8 q^{-4} $.
Since for  $L\le N^{1/32}$ and $N\le q^{2/3}$ we have 
$$
N^{12/5} L^{96/5}\ge L^{32} N^8 q^{-4} 
$$
we see that under the assumption~\eqref{eq: Large  A_s} we have~\eqref{eq: T1<T2} and hence the bound~\eqref{eq:EAs_EBs} 
becomes
  \begin{equation}\label{eq:EAs}
E(\cA_s)    \le   L^2_s \(\# \cA_s\)^{11/4}  q^{o(1)} \le   L^8 N^2\( \#\cA_s\)^{3/4} q^{o(1)} \,.
\end{equation}

    By the definition of the sets $\cA_s$, we have 
\begin{equation}\label{f:E_A_s}
    \sum_{s\in \cA-\cA} \# \cA_s =\( \# \cA\)^2 \,.
\end{equation}

Furthermore, using the definition of $\mathcal{U}_3$--norm we write
\begin{equation}\label{eq: AU3}
	\| \cA \|_{\mathcal{U}^3}   = \sum_{s \in \cA-\cA} E(\cA_s)  
=  \sum_{s :\, \# \cA_s \le  T}  E(\cA_s) +  \sum_{s :\, \# \cA_s > T}  E(\cA_s).
\end{equation}

First we observe that 
\begin{align*}
 \sum_{s :\, \# \cA_s \le  T}   E(\cA_s) &  =
 \#\{ (a_1, a_2,a_3, a_4, s) \in \cA^4\times\( \cA-\cA\) :\\
& \qquad \qquad \qquad a_1+a_2= a_3+a_4, \ \# \cA_s \le  T, \\  
& \qquad  \qquad \qquad \qquad \qquad a_i - s  \in \cA, \ i =1,   \ldots, 4\}\,.
\end{align*}
Thus for each of $E(\cA)$ choices $(a_1, a_2,a_3, a_4, s) \in \cA^4$, 
$a_1+a_2= a_3+a_4$ there are at most $T$ possibilities for $s$ with 
 $ \# \cA_s \le  T$ and we derive
\begin{equation}\label{eq:small As}
 \sum_{s :\, \# \cA_s \le  T}   E(\cA_s)  \le T E(\cA) \,.
\end{equation}

We now choose 
\begin{equation}\label{eq:Def T}
T= 27 E(\cA)^{-4/5} L^{32/5} N^{16/5}
\end{equation}
and note that the trivial upper bound $E(\cA) \le (\# \cA)^3 \le 27N^3$ implies that 
$T \ge  N^{4/5} L^{32/5}$. Hence for any $s$ with  $\# \cA_s> T$
the condition~\eqref{eq: Large  A_s} is satisfied and so the bound~\eqref{eq:EAs} holds.

Hence, by  identity~\eqref{f:E_A_s}, we obtain 
\begin{equation}
\label{eq:large As}
\begin{split}
 \sum_{s :\, \# \cA_s > T}  E(\cA_s) & \le   L^8 N^2  q^{o(1)} \sum_{s :\, \# \cA_s > T}\( \#\cA_s\)^{3/4}\\
 & \le
  L^8 N^2 T^{-1/4}  q^{o(1)}\sum_{s :\, \# \cA_s> T}\#\cA_s \\
		& \le 
  L^8 N^2 \cdot N^{2} T^{-1/4}  q^{o(1)} =    L^8 N^4 T^{-1/4}	 q^{o(1)}\, .
\end{split} 
\end{equation}

The value of $T$ in~\eqref{eq:Def T} is chosen to balance the bounds~\eqref{eq:small As} and~\eqref{eq:large As} and thus from~\eqref{eq: AU3} we derive 
\[
	\| \cA \|_{\mathcal{U}^3} \le  E(\cA)^{1/5} L^{32/5} N^{16/5}  q^{o(1)} \,.
\]
	Finally, applying  Lemma~\ref{l:Gowers_char-2}, we obtain 
\[
	E(\cA) \le N^2 \| \cA \|^{1/4}_{\mathcal{U}^3} \le 
   L^{8/5} N^{14/5} E(\cA)^{1/20}  q^{o(1)} \,,
\]
and whence
\[
    E(\cA) \le 	 L^{32/19} N^{56/19} q^{o(1)}\,,
\]
which gives the desired result for $k=3$. 
 
\subsubsection{Case $k=4$} 
Next we  consider the case $k=4$. 
	Let 
	$$
	\cA_{s,t} = \cA \cap (\cA-s) \cap (\cA-t) \cap (\cA-s-t)
	$$
	and let $x\in \cA_{s,t}$.
	Then $x^4, (x+s)^4, (x+t)^4, (x+t+s)^4 \in \cN$ and hence $\cN - \cN$ contains 
\[
	3u x^3 + 6 u^2 x^2 + 3   u^3 x+ u^4, \qquad u \in \{s,t, s+t\}.
\]
	Subtracting  the expressions with $s$ and $t$ from the expression with $s+t$, we see that  $3\cN-3\cN$ 
	contains $12 st x^2 + 9 (t^2 s + ts^2) x + (t+s)^4-s^4-t^4$ and we can apply   a version of previous 
	arguments. 
	 In particular, since by the Pl\"unnecke inequality, see~\cite[Corollary~6.29]{TaoVu}, 
	\[
      \#(3\cN - 3\cN) \le  L^6 N  
     \]
 the role of $L_s$ is now played by
	$$
	L_{s,t}  =  \frac{L^6 N}{\# \cA_{s,t}}.
	$$
	We also  set 
	$$
	T =  (E(\cA) N^{2} L^{12} \| \cA \|_{\mathcal{U}^3}^{-1})^{4/5}
	$$
	and note that  we have the trivial bound  $\| \cA \|_{\mathcal{U}^3} \le N E(\cA)$.
	We also have 
	$$
	T \ge   N^{4/5} L^{48/5}.
	$$
	We now verify that $T^3 \ge  L^{48} N^8 q^{-4}$ or 
	$$
	 N^{12/5} L^{144/5} \ge L^{48} N^8 q^{-4}
	 $$
	 which is equivalent to $N^{28} L^{96} \le q^{20} $.
	 Since we can clearly assume that $L\le N^{1/48}$ as otherwise the result is
	 trivial,  the last inequality hold under our assumption   
	$N\le q^{2/3}$.
		
	Hence, similar to the case $k=3$ after simple calculations,  one verifies that  for 
	$\#\cA_{s,t} > T$,  we have $	L_{s,t}^2 \(\#\cA_{s,t}\)^{5/4}\le q$ which in turn 
	is equivalent to 	
	$$
	\(\#\cA_{s,t}\)^3 \ge T^3 \ge  L^{48} N^8 q^{-4}. 
	$$ 
	Hence, by~\eqref{eq:Energy-SSZ-Set-N}   we have
\begin{align*}
E(\cA_{s,t}) & \le  \(  L^4_{s,t} \(\# \cA_{s,t}\)^4/q+ L^2_{s,t} \(\# \cA_{s,t}\)^{11/4}\) q^{o(1)}\\
    & \le q^{o(1)} L^{12} N^2 \(\#\cA_{s,t}\)^{3/4}.
\end{align*} 
	Using~\eqref{f:Gowers_sums_A_s}  and~\eqref{f:Gowers_sq_A}  and the arguments as above, we get 
\begin{equation}
\label{eq:AU4-Up}
\begin{split}
	\| \cA \|_{\mathcal{U}^4} &= \sum_{s,t} E(\cA_{s,t}) \\
	& \le T \| \cA \|_{\mathcal{U}^3} +  L^{12} N^2  q^{o(1)} \sum_{(s,t) :\, \#\cA_{s,t} > T} \#(\cA_{s,t})^{3/4} 
	\\ &\le
T \| \cA \|_{\mathcal{U}^3} +   L^{12} N^2 E(\cA) T^{-1/4}  q^{o(1)}\\
&	\le 
	  L^{48}  N^{8/5} E^{4/5} (\cA) \| \cA \|^{1/5}_{\mathcal{U}^3}  q^{o(1)}\,
\end{split} 
\end{equation}
since again we have chosen $T$ to optimise the above bound. 

On the other hand,  applying  Lemma~\ref{l:Gowers_char-1} 
and then  Lemma~\ref{l:Gowers_char-2}, we derive 
\begin{equation}
\label{eq:AU4-Low}
     \| \cA \|_{\mathcal{U}^4} \ge   \frac{\| \cA \|^{7/2}_{\mathcal{U}^3}}{\| \cA \|_{\mathcal{U}^2}^3}    
     =   \frac{\| \cA \|^{7/2}_{\mathcal{U}^3}}{E^3(\cA)}   \ge 
\| \cA \|^{1/5}_{\mathcal{U}^3} \cdot \frac{E^{51/5}(\cA)}{N^{132/5}}.  
\end{equation}
Comparing~\eqref{eq:AU4-Up} and~\eqref{eq:AU4-Low}
\[
	E(\cA) \le L^{48/47} N^{3-1/47} q^{o(1)} \,,
\]
which gives the desired result for $k=4$. 

\subsubsection{Case $k\ge 5$} 
	Finally, consider the general case, which we treat with a version of {\it Weyl differencing\/}.
	Now 
$$
\cA_{\vec{s}} = \cA_{s_1,\ldots,s_{k-2}}=\cA_{\pi(s_1,\ldots,s_{k-2})}
$$ and let $x\in \cA_{s_1,\ldots,s_{k-2}}$. 
	Indeed, we start with $\cA_{s_1}$ and reduce the main term in $x^k, (x+s_1)^k \in \cN$ deriving that
	$p_{k-1} (x) \in \cN-\cN$, where $\deg p_{k-1}= k-1$. 	After that consider $(\cA_{s_1})_{s_2} = \cA_{\pi(s_1,s_2)}$ and reduce  degree of the polynomial by one, 
	and so on. 
	 We also note that  by the Pl\"unnecke inequality, see~\cite[Corollary~6.29]{TaoVu}, 
	\[
      \#\(2^{k-1}\cN - 2^{k-1}\cN\) \le  L^{2^k} N    
     \]
 the role of $L_s$ or $L_{s,t}$  is now played by
	$$
	L_{\vec{s}}  =  \frac{L^{2^{k}} N}{\# \cA_{\vec{s}}}.
	$$
	
	We  now set 	 
	$$
	T = \(N^{2} L^{12} \| \cA \|_{\mathcal{U}^{k-2}} \| \cA \|_{\mathcal{U}^{k-1}}^{-1}\)^{4/5}.
	$$
	Using the same arguments as  above, after somewhat tedious calculations to verify all 
	necessary conditions such as  
\begin{equation}\label{tmp:L_s&T}
    N^8 
    L^{2^{k+2}} 
    q^{-4} \le \(\#\cA_{s_1,\ldots,s_{k-2}}\)^3
\end{equation}
	to obtain 
	$$
	E(\cA_{s_1,\ldots,s_{k-2}}) \le  L^{2^k} N^2 \(\#\cA_{s_1,\ldots,s_{k-2}}\)^{3/4} q^{o(1)}.
	$$
	In particular  to check~\eqref{tmp:L_s&T} we note that for the above choice of $T$ we have 
	$$
	T \ge N^{4/5}L^{2^{k+2}/5}, 
	$$ 
	and then derive  
\[
    N^8 L^{2^{k+2}} q^{-4} \le N^{12/5} L^{3\cdot  2^{k+2}/5} \le T^3 
\]
which is true because $N\le q^{2/3}$ and $L\le N^{1/ 2^{k+2}}$ (which we can assume as 
otherwise the bound is trivial). 

	Using the formula~\eqref{f:Gowers_sums_A_s} and~\eqref{f:Gowers_sq_A}
	we obtain  
\begin{align*}
\| \cA \|_{\mathcal{U}^k} & \le T \| \cA \|_{\mathcal{U}^{k-1}} +  L^{2^k} N^2 q^{o(1)} \sum_{\vec{s} :\, \#\cA_{\vec{s}} > T} \#(\cA_{\vec{s}})^{3/4} \\
& \le
T \| \cA \|_{\mathcal{U}^{k-1}} + L^{2^k} N^{2} \| \cA\|_{\mathcal{U}^{k-2}} T^{-1/4} q^{o(1)}\\
& \le
 L^{2^k \cdot 4/5}  N^{8/5} \| \cA \|^{4/5}_{\mathcal{U}^{k-2}} \| \cA \|^{1/5}_{\mathcal{U}^{k-1}} q^{o(1)}
\end{align*} 
and 
hence by induction and Lemma~\ref{l:Gowers_char-2}
\[
E(\cA)^{7\cdot 2^{k-1}-9} \le  L^{2^{k+2}} N^{21\cdot 2^{k-1}-28}  q^{o(1)}.
\]
In other words, 
\[
E(\cA) \le   L^{2^{k+2}/(7\cdot 2^{k-1}-9)}  N^{3-1/(7\cdot 2^{k-1}-9)}q^{o(1)}\,, 
\]
which completes the proof.

\section{Proof of Theorem~\ref{thm:b1}}

Define 
\begin{equation}
\label{eq:fmn}
f_m(n)=\sum_{\substack{x\in \Fq \\ x^2=amn}}e_q(hx),
\end{equation}
so that 
$$
V_{a,q}(\balpha, \varphi;h,M,N)=\sum_{m\sim M}\alpha_m\sum_{n\in \Z}\varphi(n)f_m(n).
$$
Recall that $\varphi$ satisfies~\eqref{eq:cond phi}. 

Applying Poisson summation to  the sum  over  $n$ gives 
\begin{equation}
\label{eq:Vstep1}
V_{a,q}(\balpha, \varphi;h,M,N)\sim \frac{N}{q^{1/2}}\sum_{m\sim M}\alpha_m\sum_{n\in \Z}\widehat \varphi\left(-\frac{n}{q}\right)\widehat f_m(n),
\end{equation}
where 
$$
\widehat f_m(n)=\frac{1}{q^{1/2}}\sum_{\lambda \in \Fq}f_m(\lambda)e_q(\lambda n).
$$
Using~\eqref{eq:fmn} and interchanging summation
\begin{align*}
\widehat f_m(n)&=\frac{1}{q^{1/2}}\sum_{x \in \Fq}\sum_{\substack{\lambda\in \Fq \\ x^2=am\lambda}}e_q(hx)e_q(\lambda n) \\ 
&=\frac{1}{q^{1/2}}\sum_{x \in \Fq}e_q(hx)e_q(\overline{am}n x^2+hx),
\end{align*}
where $\overline{am}$ denotes multiplicative inverse modulo $q$. Summation over $x$ is a quadratic Gauss sum which has evaluation, see~\cite[Theorem~1.52]{BEW}
$$
\widehat f_m(n)=\varepsilon_q \chi(amn)e_q(-am\overline {4n}h^2),
$$
for some $|\varepsilon_q|=1$, where $\chi$ is the quadratic character mod $q$. Therefore, there exists some $(c,q)=1$ depending on $a,h$ such that 
$$
\widehat f_m(n)=\varepsilon_q \chi(amn)e_q(cm\overline {n}).
$$
Substituting into~\eqref{eq:Vstep1} and applying the triangle inequality
$$
\left|V_{a,q}(\balpha, \varphi;h,M,N)\right| \ll \frac{1}{q^{1/2}}\sum_{m\sim M}\left|\sum_{n\in \Z}\widehat \varphi\left(-\frac{n}{q}\right)\chi(n)e_q(cm\overline {n})\right|.
$$
Define 
\begin{equation}
\label{eq:Vdef}
U=\frac{q}{MN},
\end{equation}
so by assumption on $M,N$ we have  $U\gg 1$. For fixed $m\sim M$ apply shifts $n\rightarrow n+um$ to the inner summation over $n$. Averaging this over  $1\le u \le U$ gives
\begin{align*}
& V_{a,q}(\balpha, \varphi;h,M,N) \\
& \qquad \ll \frac{1}{q^{1/2}N}\sum_{m\sim M}\sum_{n\in \Z}\\
& \qquad \qquad \qquad \quad  \left|\sum_{1\le u \le U}\widehat \varphi\left(-\frac{n+mu}{q}\right)\chi(n+mu)e_q(cm(\overline {n+mu}))\right|.
\end{align*} 
Let $\varepsilon>0$ be small. Note by~\eqref{eq:cond phi}  and partial integration, for any $m\sim M$, $1\le u \le U$  and constant $C>0$ we have 
$$
\widehat \varphi\left(-\frac{n+mu}{q}\right)\ll \frac{1}{n^{C}}, \quad \text{provided} \quad n\ge \frac{q^{1+\varepsilon}}{N}.
$$
Therefore 
\begin{align*}
& V_{a,q}(\balpha, \varphi;h,M,N)\\
& \qquad \ll \frac{1}{q^{1/2}N} \sum_{m\sim M}\sum_{|n|\le q^{1+\varepsilon}/N} \\
& \qquad \qquad \qquad \quad  \left|\sum_{1\le u \le U}\widehat \varphi\left(-\frac{n+mu}{q}\right)\chi(n+mu)e_q(cm(\overline {n+mu}))\right|.
\end{align*} 
Applying partial summation to $u$ and using 
$$
\frac{\partial\varphi\left(-\frac{n+mu}{q}\right)}{\partial u}\ll \frac{N}{|u|},
$$
we obtain
\begin{align*}
V_{a,q}(\balpha, \varphi;h,M,N)& \ll \frac{N^{1+o(1)}}{q^{1/2}U}  \sum_{m\sim M}
\\&\qquad \quad  \sum_{\substack{|n|\le q^{1+\varepsilon}/N}}\left|\sum_{1\le u\le U_0}\chi(n\overline{m}+u)e_q(c\overline{(n\overline{m}+u)})\right|,
\end{align*}
for some $U_0\le U$. Let $I(\lambda)$ count the number of solutions to 
$$
\lambda \equiv nm^{-1} \bmod{q}, \quad  |n|\le \frac{q^{1+o(1)}}{N}, \quad m \sim M,
$$
so that 
\begin{equation}
\label{eq:Vaq-I}
\begin{split}
& V_{a,q}(\balpha, \varphi;h,M,N)\\
& \qquad \quad \le \frac{N^{1+o(1)}}{q^{1/2}U}\sum_{\lambda \in \Fq}I(\lambda)\left|\sum_{1\le u\le U_0}\chi(\lambda+u)e_q(c\overline{(\lambda+u)})\right|.
\end{split} 
\end{equation}
Note 
\begin{equation}
\label{eq:lambdaell1}
\sum_{\lambda \in \Fq}I(\lambda)\ll \frac{qM}{N},
\end{equation}
and 
\begin{align*}
\sum_{\lambda \in \Fq}I(\lambda)^2 = 
\#\{(m_1,m_2, n_1 , n_2)&\in \Z^4:~n_1m_2 \equiv n_2m_2 \bmod{q},\\
 &    |n_1|, |n_2| \le \frac{q^{1+\varepsilon}}{N}, \ m_1,m_2 \sim M\}.
\end{align*}
It is known (see, for example,~\cite{ACZ}) that 
$$
\sum_{\lambda \in \Fq}I(\lambda)^2\le q^{2\varepsilon+o(1)}\left(\frac{1}{q}\left(\frac{q M}{N}\right)^2+\frac{qM}{N}+M^2\right),    
$$
and by assumptions on $M,N$ the above simplifies to 
\begin{equation}
\label{eq:lambdaell2}
\sum_{\lambda \in \Fq}I(\lambda)^2\ll \frac{q^{1+2\varepsilon}M}{N}.
\end{equation} 
Applying the H\"{o}lder inequality to summation in~\eqref{eq:Vaq-I} gives 
\begin{align*}
V_{a,q}(\balpha, \varphi;h,M,N)^{2r}&\ll \frac{N^{2r+o(1)}}{q^{r}U^{2r}}\left(\sum_{\lambda\in \Fq}I(\lambda) \right)^{2r-2}\left(\sum_{\lambda\in \Fq}I(\lambda)^2 \right) \\ 
& \qquad  \qquad \times \sum_{\lambda \in \Fq}\left|\sum_{1\le u\le U_0}\chi(\lambda+u)e_q(c\overline{(\lambda+u)})\right|^{2r}.
\end{align*}
Using~\eqref{eq:lambdaell1} and~\eqref{eq:lambdaell2}
\begin{align*}
& V_{a,q}(\balpha, \varphi;h,M,N)^{2r}\\
& \quad \quad \le q^{r-1+4r\varepsilon+o(1)}NM^{2r-1}\frac{1}{U^{2r}}\sum_{\lambda \in \Fq}\left|\sum_{1\le u\le U_0}\chi(\lambda+u)e_q(c\overline{(\lambda+u)})\right|^{2r}.
\end{align*}
Expanding the $2r$-th power, interchanging summation, isolating the diagonal contribution and using the Weil bound gives 
$$
\sum_{\lambda \in \Fq}\left|\sum_{1\le u\le U_0}\chi(\lambda+u)e_q(c\overline{(\lambda+u)})\right|^{2r}\ll q^{1/2}U^{2r}+qU^{2r}.
$$
Using in the above and recalling~\eqref{eq:Vdef}, we get 
\begin{align*}
V_{a,q}(\balpha, \varphi;h,M,N)^{2r}&\ll q^{r-1+4r\varepsilon+o(1)}NM^{2r-1}\left(q^{1/2}+\frac{q}{U^r}\right) \\
&\ll q^{r-1/2+4r\varepsilon+o(1)}NM^{2r-1}\left(1+\frac{(MN)^{r}}{q^{r-1/2}}\right),
\end{align*}
from which the result follows after taking $\varepsilon$ sufficiently small.

\section{Proof of Theorem~\ref{thm:GammaqP}}

\subsection{Preliminaries} 
Our argument follows the proof of~\cite[Theorem~1.10]{DKSZ}, the only difference being our use of Corollary~\ref{cor:Waq}  and Theorem~\ref{thm:b1}. We refer the reader to~\cite[Section~7]{DKSZ} for more complete details.   

Let $\widetilde{S}_q(h,P)$ denote the sum
$$
\widetilde{S}_{q}(h,P) = \sum_{k=1}^P \Lambda(k) \sum_{\substack{x \in \Fq \\ x^2 =k}} \eq(hx).
$$
By partial summation, it is sufficient to show 
$$
\widetilde{S}_{q}(h,P) \ll q^{o(1)}(P^{15/16}+q^{1/8}P^{3/4}+q^{1/16}P^{69/80}+q^{13/88}P^{3/4}).
$$
Let $J\ge 1$ be an integer. Using the Heath-Brown identity and a smooth partition of unity as in~\cite[Section~1.7]{DKSZ}, there exists some 
$$
\mathbf{V}=(M_1,\ldots , M_J,N_1,\ldots  ,N_J) \in [1/2,2P]^{2J} 
$$
 $2J$-tuple of parameters satisfying
$$
N_1 \geq \ldots \ge N_J, \quad  M_1,\ldots ,M_J \leq P^{1/J},\quad   P \ll  Q \ll  P,
$$
(implied constants are allowed to depend on $J$),  
 \begin{equation} \label{eq:prod Q}
Q =  \prod_{i=1}^J M_i \prod_{j =1}^JN_j,
\end{equation}  
and
\begin{itemize}
\item the arithmetic functions $m_i \mapsto \gamma_i(m_i)$ are bounded and supported in $[M_i/2,2M_i]$;
\item the smooth functions $x_i \mapsto V_i(x)$ have support in $[1/2,2]$ and satisfy
$$
V^{(j)}(x) \ll  q^{j \varepsilon}
$$
for all integers $j \geq 0$, where the implied constant may depend on $j$ and $\varepsilon$. 
\end{itemize}  
such that defining 
\begin{align*}
\Sigma(\mathbf{V})=\sum_{m_1, \ldots, m_J=1}^{\infty} &\gamma_1(m_1)\cdots  \gamma_J(m_J)  \sum_{n_1,\ldots , n_J=1}^\infty \\
\\& V_1 \( \frac{n_1}{N_1} \)  \cdots V_J \( \frac{n_J}{N_J} \)
\sum_{\substack{x \in \Fq \\ x^2=m_1 \cdots m_J n_1 \cdots  n_J}} \e_q(hx), 
\end{align*}  
we have 
$$
\widetilde{S}_{q}(h,P)\ll P^{o(1)}\Sigma(\mathbf{V}).
$$
We proceed on a case by case basis depending on the size of $N_1$.  We first note a general estimate for the multilinear sums. Let $\cI,\cJ\subseteq \{1,\ldots,J\}$ and write 
$$
M=\prod_{i\in \cI}M_i\prod_{j\in \cJ}N_j, \quad N=Q/M.
$$
Grouping variables in $\Sigma(\mathbf{V})$ according to $\cI,\cJ$, there exists $\alpha,\beta$ satisfying 
$$
\|\alpha\|_{\infty}, \|\beta\|_{\infty}=Q^{o(1)},
$$ 
such that 
$$
\Sigma(\mathbf{V})=\sum_{\substack{m\le 2^J M \\ n\le 2^J N}}\alpha(m)\beta(n)\sum_{\substack{x\in \mathbb{F}_q \\ x^2=mn}}e_q(hx).
$$
By Corollary~\ref{cor:Waq}
\begin{equation}
\begin{split} 
\label{eq:Swigma4}
& \Sigma(\mathbf{V})\\
&\quad \le  q^{1/8+o(1)}P^{3/4}\left(\frac{P^{3/16}}{q^{1/16}M^{3/16}}+1\right)\left(\frac{M^{3/16}}{q^{1/16}}+1\right) \\
&\quad \le q^{o(1)}\left(P^{15/16}+\frac{q^{1/16}P^{15/16}}{M^{3/16}}+q^{1/16}P^{3/4}M^{3/16}+q^{1/8}P^{3/4} \right).
\end{split} 
\end{equation}
We proceed on a case by case basis depending on the size of $N_1$. Let $P^{1/2}\ge H\ge P^{\varepsilon}$ be some paramters and take  
$$
J = \rf{\log P/\log H}.
$$

\subsection{Small $N_1$}
Suppose first $N_1\le H$ then arguing as in~\cite[Equation~(7.13)]{DKSZ} we
can  choose two arbitrary sets $\cI, \cJ \subseteq\{1, \ldots, J\}$ such that for 
$$
M = \prod_{i\in \cI} M_i \prod_{j \in \cJ} N_j \mand N = Q/M, 
$$
where $Q$ is given by~\eqref{eq:prod Q} and we have 
 \begin{equation}
P^{1/2} \ll M \ll H^{1/2}P^{1/2}. 
\end{equation}
Hence by~\eqref{eq:Swigma4}
\begin{equation}
\label{eq:case1}
\Sigma(\mathbf{V})\le q^{o(1)}\left(P^{15/16}+q^{1/16}P^{27/32}H^{3/32}+q^{1/8}P^{3/4} \right).
\end{equation}
\subsection{Medium $N_1$}
Let $L$ be a parameter satisfying $H\le L$ and suppose next that 
$$
H\le N_1\le L.
$$
We may also suppose 
$$H\le N_2\le N_1\le L,$$
as otherwise we may argue before to obtain the bound~\eqref{eq:case1}. In this case we define $M,N$ as 
$$
N=\prod_{i=1}^{J}\prod_{j=3}^{J}N_j \quad \text{and} \quad M=N_1N_2,
$$
so that 
$$
H^2\le M\le L ^2.
$$
By~\eqref{eq:Swigma4}
\begin{equation}
\begin{split}
\label{eq:case2}
\Sigma(\mathbf{V})\le q^{o(1)}\biggl(P^{15/16}& +\frac{q^{1/16}P^{15/16}}{H^{3/8}}\\
& \quad +q^{1/16}P^{3/4}L^{3/8}+q^{1/8}P^{3/4} \biggr).
\end{split} 
\end{equation}
\subsection{Large $N_1$}
Let $R$ be a paramter to be chosen later and satisfying $R\ge P^{1/2}$. Suppose next that 
$$L^2\le N_1\le R.$$
Taking $M=N_1$ as above, we derive from~\eqref{eq:Swigma4}
\begin{equation}
\begin{split} 
\label{eq:case3}
\Sigma(\mathbf{V} ) \le   q^{o(1)}\biggl(P^{15/16}& +\frac{q^{1/16}P^{15/16}}{L^{3/8}}\\
&\quad  +q^{1/16}P^{3/4}R^{3/16}+q^{1/8}P^{3/4} \biggr).
\end{split} 
\end{equation}

\subsection{Very large $N_1$}
Finally consider when $N_1\ge R$. Applying Theorem~\ref{thm:b1} with $r=2$, and using the assumptions $P\le q^{3/4}$ and $R\ge P^{1/2}$ we obtain
\begin{equation}
\label{eq:case4}
\Sigma(\mathbf{V}) \le q^{3/8+o(1)}\frac{P^{3/4}}{R^{1/2}}.
\end{equation}

\subsection{Optimiziation}
Combining all previous bounds~\eqref{eq:case1}, \eqref{eq:case2}, \eqref{eq:case3} and~\eqref{eq:case4} results in
\begin{align*}
\widetilde{S}_{q}(h,P)&\le q^{o(1)}(P^{15/16}+q^{1/8}P^{3/4})\\
&  \qquad \qquad+q^{o(1)}\left(q^{1/16}P^{27/32}H^{3/32}+\frac{q^{1/16}P^{15/16}}{H^{3/8}}\right) \\ 
&  \qquad \qquad \qquad  +q^{o(1)}\left( q^{1/16}P^{3/4}L^{3/8}+\frac{q^{1/16}P^{15/16}}{L^{3/8}}\right) \\ &  \qquad \qquad \qquad \qquad +q^{o(1)}\left(q^{1/16}P^{3/4}R^{3/16}+q^{3/8+o(1)}\frac{P^{3/4}}{R^{1/2}}\right).
\end{align*}
Taking parameters 
$$
H=P^{1/5}, \quad L=P^{1/4}, \quad R=q^{5/11},
$$
gives
$$
\widetilde{S}_{q}(h,P)\le q^{o(1)}(P^{15/16}+q^{1/8}P^{3/4}+q^{1/16}P^{69/80}+q^{13/88}P^{3/4}),
$$
which completes the proof.

\section*{Acknowledgement} 

The authors would like to thank Alexander Dunn for some useful discussions and in particular for pointing out the paper of Duke~\cite{Duke} regarding multidimensional Sail\'{e} sums.

During the preparation of this work,  B.K. was supported  by the   Academy of Finland Grant~319180 and is currently supported by the Max Planck Institute for Mathematics, 
I.D.S. by  the Ministry of Education and Science of the Russian Federation in the framework of MegaGrant 075-15-2019-1926 and 
 I.E.S.   by the Australian Research Council Grant DP170100786.

\end{document}